\documentclass[reqno,12pt]{amsart}
\usepackage{a4wide,color,eucal,enumerate,mathrsfs}
\usepackage[normalem]{ulem}
\usepackage{amsmath,amssymb,epsfig,amsthm} 
\usepackage[latin1]{inputenc}
\usepackage{psfrag}
\usepackage{hyperref,cleveref,tikz}

\def\nn{{\mathbb N}}

\def\ct{{\mathcal T}}

\def\az{\alpha}

\def\dz{\delta}

\def\bz{\beta}

\def\gz{{\gamma}}

\def\sz{\sigma}

\def\bint{{\ifinner\rlap{\bf\kern.35em--}
\int\else\rlap{\bf\kern.45em--}\int\fi}\ignorespaces}

\def\bbint{{\ifinner\rlap{\bf\kern.35em--}
\hspace{0.078cm}\int\else\rlap{\bf\kern.45em--}\int\fi}\ignorespaces}

\newtheorem{thm}{Theorem}[section]
\newtheorem{lem}[thm]{Lemma}
\newtheorem{prop}[thm]{Proposition}

\numberwithin{equation}{section}

\theoremstyle{remark}
\newtheorem{rem}[thm]{Remark}

\def\bint{{\ifinner\rlap{\bf\kern.35em--}
\int\else\rlap{\bf\kern.45em--}\int\fi}\ignorespaces}

\usepackage{graphicx}
\usepackage{import}
\usepackage{xifthen}
\usepackage{pdfpages}
\usepackage{transparent}

\title[Weak Bernoulli laws without Alt--Caffarelli regularity]{Serrin's overdetermined theorem and Weak Bernoulli laws without Alt--Caffarelli regularity}
\author{Yi Ru-Ya Zhang}
\date{\today}

\address{State Key Laboratory of Mathematical Sciences, Academy of Mathematics and Systems Science, Chinese Academy of Sciences, Beijing 100190, China}
\address{Academy of Mathematics and Systems Science, the Chinese Academy of Sciences, Beijing 100190, China}
\email{yzhang@amss.ac.cn}
 \thanks{The author is funded by the National Key R\&D Program of China (Grant No. 2025YFA1018400 \&  No. 2021YFA1003100), NSFC Grant No. 12288201 \& No. 12571128, the Chinese Academy of Sciences, and CAS Project for Young Scientists in Basic Research, Grant No. YSBR-031. }

\subjclass[2020]{30C35, 35N25, 35R35}
\keywords{Weak Bernoulli law, Serrin's overdetermined problem, Smirnov domain}

\begin{document}

\begin{abstract}
We study distributional Bernoulli-type conditions in geometrically irregular domains $\Omega$. Here the zero extension of $u$ to $\mathbb{R}^n$ satisfies
$$
\Delta u \;=\; c\,\mathcal{H}^{n-1}\!\lfloor_{\partial^*\Omega}\;-\;f(u)\,\mathbf{1}_\Omega\,dx
$$
in the distributional sense. This is a weak version of the one-phase Bernoulli free boundary condition, which avoids the uniform Lipschitz/density assumptions of classical Alt--Caffarelli theory.

We prove that for every $n \ge 2$ and every $f \in C^2(\mathbb{R})$ with $f(0)>0$, there exist bounded, non-spherical, finite-perimeter domains $\Omega\subset \mathbb R^n$ satisfying this distributional Bernoulli law with
$$
 0<\mathcal H^{n-1}(\partial^*\Omega)<\infty,
 \qquad \mathcal H^{n-1}(\partial\Omega\setminus\partial^*\Omega)=0,
$$
yet
$$
 {\rm ess}\sup_{x\in\partial^*\Omega}
 \sup_{0<r<1} \frac{\mathcal H^{n-1}(B_r(x)\cap\partial^*\Omega)}{r^{n-1}} =\infty.
$$ 
This shows the key constraint is not absence of a reduced boundary, but failure of uniform all-scale surface density bounds. For $f \equiv 1$, these results yield counterexamples to the weak Serrin-type overdetermined problems in all dimensions, proving the distributional Bernoulli law alone cannot replace the uniform growth/density conditions core to Alt--Caffarelli theory.

On the other hand, we prove a planar rigidity result: Within the Smirnov class, the associated harmonic quadrature identity forces $\Omega$ to be a disk. Thus, for the constant-source Serrin/Bernoulli law, Smirnov regularity is the threshold for weak Bernoulli rigidity in $\mathbb R^2$, while uniform upper density bounds form the threshold for $n\ge 3$ according to \cite{FZ2025}.
\end{abstract}


\maketitle

\section{Introduction}

A classical theorem of Serrin states that a bounded $C^2$ domain
$\Omega\subset\mathbb R^n$ which supports a solution of the overdetermined
boundary value problem
\begin{equation}\label{eq:classical-serrin}
    -\Delta u = 1 \quad\text{in }\Omega,\qquad
    u=0 \quad\text{on }\partial\Omega,\qquad
    \partial_{\nu_{\rm in}}u=c \quad\text{on }\partial\Omega
\end{equation}
must be a ball. Here $\nu_{\rm in}$ denotes the inward unit normal and
$c$ is a constant. Serrin's original proof \cite{S1971} is based on
the moving plane method, while Weinberger later gave a proof using a
$P$-function argument \cite{W1971}. Since then, Serrin-type
overdetermined problems have become a central theme in elliptic PDE,
potential theory, and geometric analysis; see, for example,
\cite{BNST2008,CH1998,PS1989,CWZ2025} and the surveys
\cite{S2001,NT2018}.

Serrin's problem may also be viewed as a one-phase free-boundary problem. If the domain is unknown, then the Dirichlet condition fixes the positivity phase, while the constant normal derivative is the Bernoulli condition on the free boundary.  In the variational one-phase theory of Alt and Caffarelli \cite{AltCaffarelli1981}, this Bernoulli law is accompanied by additional growth and density hypotheses, which are essential for regularity of the
reduced free boundary.  Related regularity theories were developed through Caffarelli's viscosity approach
\cite{Caffarelli1987,Caffarelli1989a,Caffarelli1988}, the Caffarelli--Salsa theory \cite{CaffarelliSalsa2005}, De Silva's flatness
theorem \cite{DeSilva2011}, and the almost-minimizer theory of David--Toro \cite{DavidToro2015} and De Silva--Savin  \cite{DeSilvaSavin2019}.

The purpose of the present paper is to isolate what remains if one keeps only
the distributional Bernoulli identity and discards the accompanying growth or
density assumptions.

Even more generally, let $\Omega\subset\mathbb R^n$ be a bounded set with countably
$(n-1)$-rectifiable boundary and finite perimeter measure.  A compactly
supported function $u$, extended by zero outside $\Omega$, is said here to
satisfy a \emph{distributional Bernoulli law} with source $f(u)$ if
\begin{equation}\label{serrin}
    \Delta u
    =
    c\,\mathcal H^{n-1}\!\lfloor_{\partial^*\Omega}
    - f(u)\mathbf 1_\Omega\,dx
    \qquad\text{in }\mathcal D'(\mathbb R^n),
\end{equation}
where $\partial^*\Omega$ denotes the reduced boundary of $\Omega$ and
$c>0$ is a constant.  In the planar Jordan-domain part of the paper one may
read $\partial^*\Omega$ as $\partial\Omega$ up to $\mathcal H^1$-null sets.
Equivalently,
\begin{equation*}
\int_{\mathbb R^n} u\,\Delta\varphi\,dx
=  c\int_{\partial^*\Omega}\varphi\,d\mathcal H^{n-1}
- \int_\Omega f(u)\varphi\,dx
 \qquad \forall\,\varphi\in C_c^\infty(\mathbb R^n).
\end{equation*}
When $f\equiv1$, this is the weak form of Serrin's overdetermined problem.


It is important to distinguish \eqref{serrin} from the weak solutions in the Alt--Caffarelli one-phase theory. In  \cite[Section 5]{AltCaffarelli1981}, Alt and Caffarelli imposed, in addition to the basic properties of $u$ and the  measure law, a uniform two-sided growth condition at free-boundary points.
Their density criterion shows that this growth condition is equivalent, for bounded Bernoulli coefficients, to two-sided $(n-1)$-dimensional density bounds for the free-boundary measure.  The examples constructed below satisfy the distributional Bernoulli law \eqref{serrin}, but fail the corresponding uniform upper density condition.  Hence they are not Alt--Caffarelli weak solutions in the full sense.  Rather, they show that the upper growth (or density)
hypotheses in the Alt--Caffarelli weak theory are not technical conveniences: They exclude precisely the singular boundary factors used in our construction.

The importance of the missing density hypothesis is also illustrated by the rigidity theorem of \cite{FZ2025}. Namely, if $f\equiv1$ and $\Omega$ is a bounded domain whose
boundary satisfies the uniform upper density condition
\begin{equation}\label{upper density}
    \mathcal H^{n-1}(\partial\Omega\cap B_r(x))
    \leq Lr^{n-1}
    \qquad
    \forall\,x\in\partial\Omega,\quad 0<r<r_0,
\end{equation}
for some $L>0$ and $r_0>0$, then any solution of \eqref{serrin} forces
$\Omega$ to be a ball. Related approaches in Lipschitz domains were developed
in \cite{DZ2025,DR2026}.  These results are in the same spirit as weak forms
of Alexandrov's theorem for sets of finite perimeter \cite{DM2019}.

\subsection{From Bernoulli laws to harmonic quadrature identities}

The distributional Bernoulli law naturally gives a harmonic moment identity.
Suppose first that $f\equiv1$ and that $u$ satisfies \eqref{serrin}.  If
$h$ is harmonic in an open neighborhood of $\overline\Omega$, then testing
\eqref{serrin} against $\eta h$, where $\eta\in C_c^\infty$ equals one near
$\overline\Omega$, gives
\begin{equation}\label{eq:local-quadrature}
    \int_\Omega h\,dx
    =
    c\int_{\partial^*\Omega}h\,d\mathcal H^{n-1}.
\end{equation}
If, in addition, $\Omega$ is regular for the Dirichlet problem, then the
identity extends to all harmonic functions continuous up to the boundary:
\begin{equation}\label{mean value Serrin}
    \int_\Omega h\,dx
    =
    c\int_{\partial\Omega}h\,d\mathcal H^{n-1}
    \qquad
    \forall\,h\in C(\overline\Omega),\quad \Delta h=0\text{ in }\Omega .
\end{equation}
For a general source $f(u)$, the corresponding moment identity is
\begin{equation}\label{eq:semilinear-quadrature-intro}
    \int_\Omega f(u)h\,dx
    =
    c\int_{\partial\Omega}h\,d\mathcal H^{n-1}
    \qquad
    \forall\,h\in C(\overline\Omega),\quad \Delta h=0\text{ in }\Omega .
\end{equation}
Conversely, if $u$ solves the zero Dirichlet problem
$-\Delta u=f(u)$ in $\Omega$ and \eqref{eq:semilinear-quadrature-intro}
holds, then the zero extension of $u$ satisfies \eqref{serrin}.  This simple
observation is made precise in Lemma~\ref{lem:quadrature-to-weak-serrin}.

The identity \eqref{mean value Serrin} is substantially stronger than the
classical one-point mean-value properties. Epstein and Schiffer proved that
if $0\in\Omega$ and
$$
    \frac1{|\Omega|}\int_\Omega h\,dx=h(0)
    \qquad
    \forall\,h\in C(\overline\Omega),\quad \Delta h=0\text{ in }\Omega,
$$
then $\Omega$ is a ball centered at the origin \cite{ES1965}.  On the other
hand, boundary mean-value identities of the form
\begin{equation}\label{pseudoballs}
    a\int_{\partial\Omega}h\,d\mathcal H^{n-1}=h(0)
    \qquad
    \forall\,h\in C(\overline\Omega),\quad \Delta h=0\text{ in }\Omega
\end{equation}
are more flexible. In the plane, Keldysh--Lavrentiev \cite{KL1937} and Duren--Shapiro--Shields \cite{DSS1966} constructed non-ball domains satisfying
\eqref{pseudoballs}; higher-dimensional analogues were later obtained by Lewis--Vogel \cite{LV1991}.


In planar simply connected domains, \eqref{mean value Serrin} belongs to a circle of problems connecting Serrin-type overdetermined boundary conditions,
quadrature domains, arclength quadrature identities, analytic content, and single-layer potential free-boundary problems.  Khavinson \cite{K1987}
studied an equivalent approximation-theoretic formulation under additional regularity assumptions.  The analytic-boundary case and related free-boundary
formulations were developed in subsequent works; see, for instance, \cite{EKS2002,KSV2005,BK2006,KLT2013,ABKT2019,KL2022}, as well as \cite{GK1994,GK1996} for related approximation problems involving harmonic
vector fields.

The present paper firstly treats the rough simply connected case in the plane.  Namely, in the planar rectifiable Jordan class, we show that the Smirnov condition is the exact threshold between rigidity and flexibility.  In higher dimensions, we obtain counterexamples by replacing the conformal construction with a weighted axially symmetric reduction.

\subsection{Main results}

We first state the planar rigidity and counterexample results. Let $\Omega\subset\mathbb C$ be a
bounded Jordan domain with rectifiable boundary, and let
$ \phi:\mathbb D\to\Omega $
be a conformal map from the open unit disk $\mathbb D$ onto $\Omega$. Since
$\partial\Omega$ is rectifiable, one has $\phi'\in H^1(\mathbb D)$. We say that
$\Omega$ is a Smirnov domain if $\phi'$ is an outer function. Equivalently,
\begin{equation}\label{eq:smirnov-condition}
    \log |\phi'(w)|
    =
    \frac1{2\pi}
    \int_0^{2\pi}
    \frac{1-|w|^2}{|e^{it}-w|^2}
    \log |(\phi')^*(e^{it})|\,dt,
    \qquad w\in\mathbb D .
\end{equation}
This condition is independent of the choice of the conformal map.

Our first theorem proves the rigidity side of the planar theory for the constant source.  It strengthens the planar part of \cite{FZ2025} by replacing
the uniform upper density assumption with the weaker analytic Smirnov condition.  A Smirnov domain need not satisfy \eqref{upper density}; conversely, uniform upper density implies the relevant Smirnov-type boundary behavior in this setting; see, for example, \cite[Theorem~7.6 and Exercise~7.3]{P1992}.

\begin{thm}\label{thm:smirnov-case}
    Let $\Omega\subset\mathbb C$ be a bounded Jordan domain with
    rectifiable boundary. Assume that $\Omega$ is a Smirnov domain. If
    there exists $c>0$ such that \eqref{mean value Serrin} holds,
    then $\Omega$ is a disk.
\end{thm}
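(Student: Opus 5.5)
Let $\phi:\mathbb D\to\Omega$ be a conformal map. The plan is to transfer \eqref{mean value Serrin} to the disk, turn it into a statement about the boundary function $|(\phi')^*|$, and then use the Smirnov (outer) property to conclude that $\phi'$ is constant up to a fractional-linear factor. Since $\partial\Omega$ is a rectifiable Jordan curve, $\phi$ extends to a homeomorphism of $\overline{\mathbb D}$ onto $\overline\Omega$ and $\phi'\in H^1$. Arclength on $\partial\Omega$ is then $|(\phi')^*(e^{it})|\,dt$, and area is $|\phi'|^2\,dA$. A bounded Jordan domain is Dirichlet regular. Hence, for every $g\in C(\partial\mathbb D)$, the function $h=P[g]\circ\phi^{-1}$ is harmonic in $\Omega$ and lies in $C(\overline\Omega)$, so it is an admissible test function. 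Identity \eqref{mean value Serrin} becomes
\begin{equation*}
\int_{\mathbb D} P[g]\,|\phi'|^2\,dA = c\int_0^{2\pi} g(e^{it})\,|(\phi')^*(e^{it})|\,dt\qquad\forall g\in C(\partial\mathbb D).
\end{equation*}
The left side equals $\int g\,d\mu$, where $\mu$ is the balayage of the measure $|\phi'|^2dA$ to the circle. Its density is $\frac1{2\pi}\int_{\mathbb D}\frac{1-|w|^2}{|e^{it}-w|^2}|\phi'(w)|^2\,dA(w)$. Thus we obtain the pointwise identity
\begin{equation*}
|(\phi')^*(e^{it})| = \frac{1}{2\pi c}\int_{\mathbb D}\frac{1-|w|^2}{|e^{it}-w|^2}\,|\phi'(w)|^2\,dA(w)\quad\text{a.e.}
\end{equation*}

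Next I would analyse the right side. Expanding the Poisson kernel shows that it is the boundary value of a positive harmonic function $H$ on $\mathbb D$, namely $H(z)=\frac1{2\pi c}\int_{\mathbb D} G(z,w)|\phi'(w)|^2\,dA(w)$ with a kernel $G$ that is real-analytic across the circle. Consequently $|(\phi')^*|=H|_{\partial\mathbb D}$ is real-analytic, and in particular bounded above and below by positive constants. Note that $|(\phi')^*|$ is bounded above and below because $\int(1-|w|^2)|\phi'|^2/|e^{it}-w|^2\,dA$ is continuous in $t$ and positive. Here the Smirnov hypothesis enters decisively. Since $\phi'$ is outer, \eqref{eq:smirnov-condition} gives $\log|\phi'|=P[\log|(\phi')^*|]$. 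So $\phi'$ is bounded and bounded away from zero in $\mathbb D$, $\phi$ is bi-Lipschitz, and $\log|\phi'|$ extends real-analytically across $\partial\mathbb D$. Hence $\partial\Omega$ is an analytic curve. Without the Smirnov condition, a singular inner factor could hide in $\phi'$ while keeping $|(\phi')^*|$ nice; this is exactly the failure mode the counterexamples exploit.

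Once $\partial\Omega$ is analytic (in particular $C^2$), I would conclude classically. Let $u$ solve $-\Delta u=1$ in $\Omega$ with $u=0$ on $\partial\Omega$; then $u$ is smooth up to the boundary. Green's identity gives $\int_\Omega h\,dx=\int_{\partial\Omega}h\,\partial_{\nu_{\rm in}}u\,d\mathcal H^1$ for harmonic $h$ that are smooth up to the boundary. Comparing with \eqref{mean value Serrin}, the function $\partial_{\nu_{\rm in}}u-c$ annihilates all boundary traces of harmonic functions, which are dense in $C(\partial\Omega)$. Therefore $\partial_{\nu_{\rm in}}u\equiv c$, and Serrin's theorem \cite{S1971} yields that $\Omega$ is a disk.

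The main obstacle is the regularity bootstrap in the second step. One must justify rigorously that the balayage density is the trace of a harmonic function extending analytically, or at least that it is continuous and positive. Then one must push this through the outer-function representation to obtain enough boundary regularity ($C^{1,\alpha}$ suffices, after which Schauder estimates take over) to apply Serrin. If real-analyticity is delicate, I would settle for the following weaker route. The density is $C^\infty$ in $t$, because differentiating in $t$ hits only the Poisson kernel, and the weight $|\phi'|^2$ is integrable. So $\log|(\phi')^*|\in C^\infty$, and the outer representation gives $\phi'\in C^\infty(\overline{\mathbb D})$ and nonvanishing. That already makes $\partial\Omega$ smooth.
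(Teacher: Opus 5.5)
Your transfer to the disk and the resulting identity $2\pi c\,|(\phi')^*(e^{it})|=\int_{\mathbb D}\frac{1-|w|^2}{|e^{it}-w|^2}|\phi'(w)|^2\,dA(w)$ are correct. It is the relation $\mathcal T(|\phi'|^2)=2c|(\phi')^*|$ that the paper itself uses on the non-Smirnov side. The last step, from a $C^2$ boundary to Serrin's theorem, is also correct. The gap is the regularity step in between. It carries the whole proof, since it is the only place the Smirnov hypothesis could act, and none of your justifications works:
\begin{enumerate}[(i)]
\item The harmonic extension of $P_w(\cdot)$ is $G(z,w)=\frac{1-|z|^2|w|^2}{|1-z\bar w|^2}$. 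It is real-analytic across $\mathbb T$ only for each fixed $w$: its pole $1/\bar w$ tends to $\mathbb T$ as $|w|\to1$, and for $z=e^{it}$ it is just $P_w(e^{it})$ again. So the analytic-extension argument is circular.
\item You cannot differentiate under the integral. $|\partial_tP_w(e^{it})|\approx(1-|w|)/|e^{it}-w|^3$ is not integrable in $w$ near $e^{it}$; the divergence is logarithmic. This is why the balayage of even a bounded weight is in general only log-Lipschitz, as in Lemma~\ref{lem:T-holder}; for $\mathbf 1_{\{\operatorname{Im}w>0\}}$ it has infinite slope at $\pm1$.
\item All you know about $|\phi'|^2$ is that it lies in $L^1(\mathbb D)$. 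For such a weight the balayage is merely in $L^1(\mathbb T)$ and can be unbounded: mass $2^{-k}$ spread over a disk of radius $2^{-k-2}$ centred at $(1-2^{-k})e^{it}$ contributes $\approx1$ at $e^{it}$ for every $k$.
\end{enumerate}
The lower bound $|(\phi')^*|\ge c_1>0$ is fine, using $P_w\ge 1/3$ for $|w|\le 1/2$. The upper bound, which is what you actually need, is not proved.

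Nor is this a technicality that the obvious bootstrap repairs. Suppose $|(\phi')^*|\in L^p(\mathbb T)$. Outerness gives $\phi'\in H^p$, and Hardy--Littlewood ($H^p\subset A^{2p}$) gives $|\phi'|^2\in L^p(\mathbb D)$. By duality with $P:L^s(\mathbb T)\to L^{2s}(\mathbb D)$, the balayage maps $L^p(\mathbb D)$ into $L^{p/(2-p)}(\mathbb T)$. The exponent therefore improves only if $p>1$, while rectifiability gives exactly $p=1$, the fixed point. From any $p>1$ your chain would run to the end, via $L^\infty$, $C^\alpha$, Schauder for $\mathcal T\rho=-2\partial_\nu\psi$, and the outer representation. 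So you need a genuinely new a priori upper bound on $|(\phi')^*|$.

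The paper avoids boundary regularity altogether:
\begin{enumerate}[(i)]
\item Applying the identity to holomorphic polynomials and using Green's formula gives $\int_\Gamma\mathcal P(z)\,(\bar z-2ic\,\bar\tau)\,dz=0$. Hence there is $F\in E^1(\Omega)$ with bounded trace $\bar z-2ic\,\bar\tau$.
\item The Smirnov hypothesis enters as: a function in $N^+$ with bounded boundary values lies in $H^\infty$. This gives $F\circ\varphi\in H^\infty$.
\item Then $u=\frac12\operatorname{Re}G-\frac14|z|^2+C_0$, with $G'=F$, is a weak Serrin solution with $|\nabla u|\le c$.
\item Testing with $\operatorname{Re}(\varphi\,F\circ\varphi)$ gives the volume identity $2\int_\Omega u=c^2|\Omega|$, and Weinberger's $P$-function argument concludes.
\end{enumerate}
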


The proof combines the quadrature identity with Smirnov's theory of analytic
functions. Starting from \eqref{mean value Serrin}, we derive a boundary
Cauchy-orthogonality relation. Within the Smirnov class, this relation yields a
bounded holomorphic function whose boundary trace encodes the overdetermined
normal derivative. This construction produces a weak Serrin solution and then
allows one to apply Weinberger's $P$-function method in a low-regularity planar
setting.

The Smirnov assumption in Theorem~\ref{thm:smirnov-case} is sharp.  The
counterexample below is stated directly in semilinear Bernoulli form; the case
$f\equiv1$ gives the weak Serrin counterexample.

\begin{thm}\label{thm:counterexample-branch}
    Let $f\in C^2((-\rho,\rho))$ for some $\rho>0$, and assume that
    $f(0)>0$.  Then there exist a bounded rectifiable Jordan domain
    $\Omega\subset\mathbb C$, a nonnegative compactly supported function
    $u\in W^{1,2}(\mathbb C)$, and a constant $c>0$ such that $\Omega$ is
    not a disk, $\Omega$ is not a Smirnov domain, and
    \begin{equation}\label{eq:main-semilinear-law}
        \Delta u
        =
        c\,\mathcal H^1\!\lfloor_{\partial\Omega}
        - f(u)\mathbf 1_\Omega\,dx
        \qquad\text{in }\mathcal D'(\mathbb C).
    \end{equation}
    Moreover, $u\equiv0$ on $\mathbb C\setminus\overline\Omega$, and
    $\Omega$ may be chosen invariant under rotation by $\pi/2$. Indeed, for $a_0>0$ small enough, one can even choose a family   $\{\Omega_a\}_{0<a<a_0}$ of bounded rectifiable Jordan domains. 
\end{thm}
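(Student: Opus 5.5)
Following the paper's hint that the conformal construction is what breaks rigidity, the construction will be conformal via a non-Smirnov map, in the spirit of Keldysh--Lavrentiev and Duren--Shapiro--Shields. By Lemma~\ref{lem:quadrature-to-weak-serrin}, it suffices to produce a rectifiable Jordan domain $\Omega=\phi(\mathbb D)$ and a solution $u$ of $-\Delta u=f(u)$ in $\Omega$, $u=0$ on $\partial\Omega$, such that the semilinear quadrature identity \eqref{eq:semilinear-quadrature-intro} holds. Pulling back by $\phi$, set $v=u\circ\phi$. Then $v$ solves
\[
-\Delta v=|\phi'|^2 f(v)\quad\text{in }\mathbb D,\qquad v=0\text{ on }\partial\mathbb D.
\]
The quadrature identity becomes
\[
\int_{\mathbb D} f(v)\,H\,|\phi'|^2\,dA = c\int_{\partial\mathbb D} H\,|(\phi')^*|\,dt
\]
for all $H$ harmonic in $\mathbb D$ and continuous on $\overline{\mathbb D}$. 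Green's formula turns the left side into $\int_{\partial\mathbb D}H\,\partial_{\nu_{\rm in}}v\,dt$. Hence the problem reduces to a boundary condition on the disk:
\[
\partial_{\nu_{\rm in}} v = c\,|(\phi')^*| \quad\text{a.e. on }\partial\mathbb D.
\]
This has to be justified in a weak sense, since $\phi'$ is only in $H^1$.

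\textbf{Key idea.} Write $\phi'=S\,F$, where $F$ is outer and $S$ is a nontrivial singular inner factor. Then $|(\phi')^*|=|F^*|$ on the circle, while $|\phi'|^2=|S|^2|F|^2$ is strictly smaller inside. For rotational symmetry of order four, I would choose
\[
S(w)=\exp\Bigl(-a\sum_{k=0}^3\frac{i^k+w}{i^k-w}\Bigr),
\]
with $a$ the small parameter giving the family $\Omega_a$. I would then look for $F$ close to $1$ and $v$ close to the radial solution. Concretely, fix $\Omega$ and $v$ through a fixed-point problem:
\begin{itemize}
\item Given $F$, solve the Dirichlet problem for $v$ with weight $|SF|^2$. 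This is a semilinear problem with small data; for small domains and $f(0)>0$, it is solvable by contraction since $f\in C^2$ near $0$. Scaling $\phi$ by a small factor keeps $v$ inside $(-\rho,\rho)$.
\item Define the new outer function by
\[
F=\exp\bigl(\log(\partial_{\nu_{\rm in}}v/c)+i\,\widetilde{(\cdot)}\bigr),
\]
where the tilde denotes the harmonic conjugate.
\end{itemize}
Since $\partial_\nu v$ is a smooth, positive function of the angle, depending smoothly on $a$, the map is a contraction in a Hölder space $C^{1,\alpha}(\partial\mathbb D)$ near the radial solution. The constant $c$ is fixed by normalization.

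The main obstacle is regularity near the four singular points $i^k$. There $|S|^2$ is discontinuous up to the boundary, since $S$ tends to $0$ nontangentially but has modulus $1$ on the circle. So $\Delta v$ has an $L^\infty$ right-hand side but no better, and $\partial_\nu v$ is only $C^{\alpha}$ rather than smooth. This should still suffice: since the weight is bounded, elliptic estimates give $v\in C^{1,\alpha}(\overline{\mathbb D})$, so $\log\partial_\nu v\in C^\alpha$ and the conjugation is bounded on $C^\alpha$. The contraction would therefore be run in $C^\alpha$. One also needs $\partial_\nu v>0$; this follows from the Hopf lemma via positivity of $f(0)$ and smallness.

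With $F\in C^\alpha$ bounded and bounded below, $\phi'=SF$ is bounded, so $\partial\Omega$ is rectifiable. Because $\phi'$ has a nontrivial inner factor, $\Omega$ is not Smirnov, and therefore not a disk, since disks are Smirnov. Injectivity and the Jordan property should follow for small $a$ from $\phi'$ being close to $1$ in a suitable averaged sense: $\phi$ is a small perturbation of the identity, though only in $C^0$, so I would verify injectivity through a winding-number argument. The $\pi/2$-invariance is preserved by the fixed-point map. Finally, I would pass from the disk identity back to \eqref{eq:semilinear-quadrature-intro} by approximating $h\circ\phi$, and conclude with Lemma~\ref{lem:quadrature-to-weak-serrin}. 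For the distributional statement, $u\in W^{1,2}$ because the Dirichlet energy is conformally invariant.
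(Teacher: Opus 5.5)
There is a genuine gap, and it is univalence rather than the boundary regularity you flag. With the atomic factor $S(w)=\exp\bigl(-a\sum_{k=0}^3\frac{i^k+w}{i^k-w}\bigr)$, the map $\phi(w)=\int_0^w S(\xi)F(\xi)\,d\xi$ is \emph{not} univalent for any $a>0$, whatever outer $F$ with $\log|F^*|\in L^\infty$ you take. So no winding-number argument can work, and no amount of $C^0$-closeness to the identity helps.

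To see this, recall that every univalent function on $\mathbb D$ satisfies $(1-|z|^2)\,|\phi''(z)/\phi'(z)|\le 6$ (Bieberbach's $|a_2|\le 2$ applied to Koebe transforms). In other words, $\log\phi'$ must be a Bloch function. Your $\log F$ is Bloch, since it lies in BMOA. But near $i^k$ one has $(\log S)'(z)=-2a\,i^k/(i^k-z)^2+O(1)$, so along the radius ending at $i^k$,
\[
(1-|z|^2)\,|(\log S)'(z)|\approx \frac{4a}{1-|z|}\to\infty .
\]
Geometrically, already for $F\equiv1$ the two boundary arcs at $\phi(1)$ satisfy $\phi(e^{\pm it})-\phi(1)\approx \frac{t^2}{2a}e^{\mp 2ia/t}$ as $t\downarrow0$. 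They spiral into the same point with opposite orientations. By the symmetry $\phi(\bar w)=\overline{\phi(w)}$, they meet on the real axis infinitely often.

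Consequently the following all fail:
\begin{enumerate}[(i)]
\item the Jordan property;
\item rectifiability of $\partial\Omega$;
\item the non-Smirnov conclusion, which presupposes univalence;
\item the quasidisk/trace property you need in order to invoke Lemma~\ref{lem:quadrature-to-weak-serrin}.
\end{enumerate}

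The missing ingredient is the Duren--Shapiro--Shields one that the paper uses. Take a nonzero positive singular measure $\mu$ of class $A^*$, i.e.\ continuous with a Zygmund-class distribution function; its existence is itself a nontrivial fact. Symmetrize it under $\zeta\mapsto i\zeta$. For such $\mu$ one has $(1-|z|)\,|\mathcal H[\mu]'(z)|\le C\|\mu\|_{A^*}$, so for small data the Schwarzian of $\phi$ has small hyperbolic norm. The Ahlfors--Weill criterion (Lemma~\ref{lem:small-data-DSS}) then gives univalence, a quasiconformal extension, and a rectifiable Jordan boundary.

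Separately, the reason you give for the contraction (smooth dependence on $a$) is not the operative one. Write $|F^*|=e^{-W}$. The linearization at the disk of your update $W\mapsto -\log\mathcal T(|S|^2|F|^2)+\mathrm{const}$ is $2K$, where $K=\mathcal T\circ P$ and $K\mathbf e_k=\mathbf e_k/(|k|+1)$. Hence the modes $|k|=1$, the infinitesimal M\"obius recenterings, are exactly neutral. What saves the scheme is the four-fold symmetry, together with fixing the mean through $c$: these leave only $|k|\ge4$. On that space $I-2K$ is invertible on $X_\alpha^4$, by compactness of $K$ and Fredholm theory. The paper then runs the Newton-type contraction $W\mapsto W-(I-2K)^{-1}\Psi_{f,\lambda}(W,a)$.

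Your treatment of the nonlinear source via the small scaling $u=\lambda^2v$ matches the paper.
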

\begin{rem}
    Since $\partial \Omega$ is rectifiable, then 
    $$
 0<\mathcal H^{n-1}(\partial^*\Omega)<\infty,
 \qquad \mathcal H^{n-1}(\partial\Omega\setminus\partial^*\Omega)=0,
$$
Moreover, if
$$
 {\rm ess}\sup_{x\in\partial^*\Omega}
 \sup_{0<r<1} \frac{\mathcal H^{1}(B_r(x)\cap\partial^*\Omega)}{r}<\infty.
$$ 
 then the result of  \cite{Z1984} (see also \cite[Theorem 7.6]{P1992}) implies that such a boundary curve bounds a Smirnov domain, which leads to a contradiction. Thus there is no uniform upper bound on the $\mathcal H^1$-density of the boundary. 
\end{rem}


Consequently, the distributional Bernoulli law \eqref{serrin} alone does not imply free-boundary regularity or radial symmetry, even among rectifiable
Jordan domains in the plane.  The obstruction is a nontrivial singular inner factor in the conformal derivative.  This singular factor is invisible in the boundary arclength density but appears in the interior Jacobian density, and therefore it affects the harmonic moment identity.  The construction starts from the
Duren--Shapiro--Shields singular-factor mechanism and then corrects the boundary density by solving a nonlinear fixed-point equation for an outer
factor.  The imposed four-fold symmetry removes the low Fourier modes associated with translations of the disk, so no Lyapunov--Schmidt reduction is needed in the planar construction.

Recall that a planar domain $\Omega$ is called a \emph{quasidisk} if $\Omega=\phi(\mathbb D)$ for a global quasiconformal mapping $\phi:\mathbb C\to\mathbb C$, and its boundary is then called a \emph{quasicircle}.  Here $\phi$ is quasiconformal if
$\phi\in W^{1,1}_{\rm loc}(\mathbb C;\mathbb C)$ is a homeomorphism and there is a constant $K\ge1$ such that
$$
    |D\phi|^2(x)\le KJ_\phi(x)
    \qquad\text{for a.e. }x\in\mathbb C,
$$
where $J_\phi$ is the Jacobian determinant of $\phi$.  Every quasiconformal mapping is H\"older continuous; see \cite[Chapter~13]{AIM2009}.  The examples
in Theorem~\ref{thm:counterexample-branch} may be chosen with $C^{0,\alpha}$ quasicircle boundary for every $0<\alpha<1$.  Thus, within this quasicircle scale, Lipschitz or Ahlfors-regular boundary control is the sharp threshold for the weak Bernoulli rigidity problem.

The same flexibility persists in every higher dimension, while the construction is no longer a formal consequence of the planar theorem. 

\begin{thm}\label{thm:higher-dimensional-counterexample}
Let $n\ge3$.  Let $f\in C^2((-\rho,\rho))$ for some $\rho>0$, and assume that $f(0)>0$.  Then there exist a bounded set of finite perimeter $\Omega\subset\mathbb R^n$, a nonnegative compactly supported function $u\in W^{1,2}(\mathbb R^n)$, and a constant $c>0$ such that $\Omega$ is not a ball and \begin{equation}\label{eq:main-higher-dimensional-law}
 \Delta u =c\,\mathcal H^{n-1}\!\lfloor_{\partial^*\Omega}  -f(u)\mathbf 1_\Omega\,dx \qquad\text{in }\mathcal D'(\mathbb R^n).
 \end{equation}  
 Moreover $u\equiv0$ on $\mathbb R^n\setminus\overline\Omega$.  The set  $\Omega$ may be chosen to be axially symmetric and homeomorphic, as a manifold with boundary, to $\mathbb B^2\times\mathbb S^{n-2}$. 

In addition,
$$
 0<\mathcal H^{n-1}(\partial^*\Omega)<\infty,
 \qquad
 \mathcal H^{n-1}(\partial\Omega\setminus\partial^*\Omega)=0,
$$
and
$$
 {\rm ess}\sup_{x\in\partial^*\Omega}
 \sup_{0<r<1}
 \frac{\mathcal H^{n-1}(B_r(x)\cap\partial^*\Omega)}{r^{n-1}}
 =\infty.
$$
Therefore it gives a counterexample to the rigidity of Bernoulli laws without the upper density estimate \eqref{upper density}, and yields the sharpness of the result \cite{FZ2025} when $n\ge 3$ by taking $f\equiv 1$. 
\end{thm}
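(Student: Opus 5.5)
We construct $\Omega$ as the solid of revolution obtained by rotating a planar profile domain $D$ lying in the half-plane $\{(x,y): y>0\}$ about the $x$-axis. Here $x\in\mathbb R$ is the axial coordinate and $y=|x''|$ is the distance to the axis, with $x''\in\mathbb R^{n-1}$. If $\overline D$ stays at positive distance from the axis, then $\Omega$ is homeomorphic to $\mathbb B^2\times\mathbb S^{n-2}$.

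For $u(x,x'')=v(x,|x''|)$, the law \eqref{eq:main-higher-dimensional-law} reduces to a weighted planar problem. In $D$ we need
$$\operatorname{div}(y^{n-2}\nabla v)=-y^{n-2}f(v),$$
with $v=0$ on $\partial D$. The Bernoulli law becomes the weighted quadrature identity
$$\int_D f(v)\,h\,y^{n-2}\,dx\,dy=c\int_{\partial D}h\,y^{n-2}\,d\mathcal H^1$$
for all $h$ with $\operatorname{div}(y^{n-2}\nabla h)=0$ near $\overline D$. This follows by testing against axially symmetric test functions; general test functions reduce to these by averaging over rotations, since both sides of \eqref{serrin} are rotation invariant. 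The higher-dimensional analogue of Lemma~\ref{lem:quadrature-to-weak-serrin} then converts the quadrature identity into the distributional law. It uses coarea and the fact that the reduced boundary of $\Omega$ is the rotation of $\partial D$ minus an $\mathcal H^{n-1}$-null set.

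To build $D$, we adapt the planar mechanism of Theorem~\ref{thm:counterexample-branch}. Take a small disk $B_\varepsilon(p)$ with $p=(0,1)$, and write $\partial D=\phi(\partial\mathbb D)$ with $\phi=p+\varepsilon\,\psi$. Choose $\psi'=S\cdot O$, where $S$ is a singular inner factor, for example $S_a(w)=\exp\bigl(-a\frac{1+w}{1-w}\bigr)$ placed symmetrically, and $O$ is an outer factor to be determined. Near scale $\varepsilon$ the weight $y^{n-2}=(1+\varepsilon\,\mathrm{Im}\,\psi)^{n-2}$ is a smooth perturbation of $1$. The weighted problem is therefore a perturbation of the planar semilinear problem, with source $f$ replaced by $\varepsilon^2 f$ after rescaling.

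The unknown is $\log|O^*|$ on the circle, together with the constant $c$. We would set up the quadrature condition as the fixed-point equation $\mathcal F(\log|O^*|,c;a,\varepsilon)=0$ used in the planar construction, and solve it by the same contraction argument in a Hölder or Sobolev space on $\partial\mathbb D$. In the planar case the four-fold symmetry removed the translation modes. Here the weight breaks translation invariance in $y$. The axial reflection $x\mapsto -x$ removes the $x$-translation mode, but the $y$-translation mode survives. This is the main obstacle: the linearization now has a one-dimensional cokernel. We would handle it with a Lyapunov--Schmidt reduction, using the center height $t$ of the disk as an extra parameter. We would then show that the resulting one-dimensional bifurcation equation has a solution $t=t(a,\varepsilon)$ by the intermediate value theorem or the implicit function theorem. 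For this we would compute the derivative in $t$ of the weighted moment against the weighted harmonic function $h\approx y$, and check that it is nonzero.

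It then remains to verify the stated properties. The profile $D$ is not a disk, and $\Omega$ is not a ball by topology. Finite perimeter follows from the rectifiability of $\partial D$, and $\mathcal H^{n-1}(\partial\Omega\setminus\partial^*\Omega)=0$ from the same property of the rectifiable Jordan curve. For the unbounded density, note that $\phi'$ contains a nontrivial singular factor, so $D$ is not Smirnov. By Remark 1.3 (the Zinsmeister result \cite{Z1984}), $\partial D$ then has unbounded $\mathcal H^1$-density. Since the weight $y^{n-2}$ is comparable to $1$ near $\partial D$, the rotated surface has unbounded $(n-1)$-density at the corresponding points. These points form a set of positive measure after rotation, so the essential supremum is $+\infty$.
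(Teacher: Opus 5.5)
\textbf{There is a genuine gap: the centre height cannot remove the $\sin t$-mode obstruction.} Your axial reduction, the DSS-type ansatz and the Zinsmeister density argument match the paper. The step that fails is the one meant to kill the one-dimensional cokernel, i.e.\ the $\sin t$-mode, or the weighted moment against $h\approx y$. If the profile has size $\varepsilon$ and centre height $t$, then
\[
y^{n-2}=t^{n-2}\Bigl(1+\frac{\varepsilon}{t}\operatorname{Im}\psi\Bigr)^{n-2}.
\]
The factor $t^{n-2}$ cancels from both the equation and the quadrature identity. So the height enters \emph{only} through $\tau=\varepsilon/t$, which is exactly the paper's curvature parameter $\tau=R^{-1}$. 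Note also that the kernel of $I-2K$ at $|k|=1$ comes from M\"obius reparametrizations of $\mathbb D$, which do not move the profile. Translating the profile is not a kernel direction, so it is not the natural parameter conjugate to this cokernel.

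For a top--bottom balanced singular measure, the bifurcation function vanishes identically at $\tau=0$ by antipodal symmetry. The disk computation gives $\partial_\tau=-\frac p4+o_a(1)$. Hence the bifurcation function equals $\tau\bigl(-\frac p4+o(1)\bigr)$. It has a fixed sign for every admissible height and vanishes only as $t\to\infty$, where the profile degenerates back to the planar problem. A nonzero $t$-derivative does not help without a base point where the function vanishes, and there is no sign change for the intermediate value theorem. This failure is forced. Without a singular part your profiles are $C^{1,\alpha}$, so for $f\equiv1$ a solution would be a regular solid torus satisfying the weak Serrin law. That is excluded by the rigidity result of \cite{FZ2025}, so no parameter that preserves boundary regularity can cancel this mode.

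\textbf{The missing idea is that the cancellation must come from the singular data.} The paper takes $\mu_{a,\delta}=(a-\delta)\sigma_++(a+\delta)\sigma_-$. Here $\sigma_\pm$ are carried by arcs of width $\eta$ around $\pm i$, and $\sigma_-$ is the antipodal image of $\sigma_+$. This measure contributes $(4+O(\eta)+o_a(1))\delta$ to the sine coefficient. One then solves for $\delta\approx\frac p{16}\tau$ with $\tau,\lambda^2\ll a$, so that $|\delta|<a/2$ keeps the measure positive.

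The hard part, absent from your outline, is showing that re-solving the high modes does not feed back into this scalar equation at leading order. The singular perturbation is only controlled with fractional rates after balayage. This gives $\|D_W\Phi_1\|\le Ca^{\theta}$ and $\|D_\delta\Phi_{\rm hi}\|\le Ca^{-\gamma}$, and one needs $\gamma<\theta$ so that the Schur-complement correction is $O(a^{\theta-\gamma})$. If your singular factor sits only at the top or only at the bottom, its imbalance could in principle play this role. In that case its sign, its size relative to $\tau$, and this coupling are exactly what you would have to prove.

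\textbf{Your sample singular factor is not admissible.} The factor $\exp(-a\frac{1+w}{1-w})$ comes from a point mass, which is not of class $A^*$. In fact atoms are excluded altogether. For univalent $\psi$, $\log\psi'$ is a Bloch function, whereas an atom of mass $a$ would force $\operatorname{Re}\log\psi'(r)\le-(2a-o(1))/(1-r)$ along the radius to it. You need an $A^*$ singular measure, as in the DSS lemma.
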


We remark that, the last part of  Theorem~\ref{thm:higher-dimensional-counterexample} is  consistent with De Giorgi's blow-up theorem: At every point of the reduced boundary the infinitesimal blow-up is still a half-space with a locally finite upper density at each point in $\partial^*\Omega$, but no uniform all-scale bound independent of the point. Indeed, the uniform density upper bound fails when $0<r<1$ due to an intermediate-scale clustering phenomenon.

Although Theorems~\ref{thm:counterexample-branch} and
\ref{thm:higher-dimensional-counterexample} have the same distributional
Bernoulli conclusion, their proofs are substantially different.  Recall that, in the planar
case the conformal map is used directly.  The disk-side equation associated to the Duren--Shapiro--Shields construction involves the
ordinary Poisson balayage operator $\mathcal T$, and the four-fold symmetry removes the Fourier modes generated by self-M\"obius maps of the disk.  The remaining
linearized operator is
$$
    I-2\mathcal T\circ P,
$$
which is invertible on the four-fold symmetric mean-zero space.  The semilinear term is a lower-order perturbation after the scaling
$u=\lambda^2v$, since
$$
    f(\lambda^2v)=f(0)+O(\lambda^2).
$$

The higher-dimensional construction does not arise as a straightforward rotational extension of the planar one, although certain conceptual features are analogous. In contrast to the planar setting, the higher-dimensional configurations should not be interpreted as a compact perturbative family of non-spherical domains in $\mathbb{R}^n$. In fact, they are small singular perturbations of the disk only in the normalized meridian variables. Moreover, after the necessary  translation to height $R = \tau^{-1}$ and an appropriate axial rotation, the unscaled domains diverge to infinity as $\tau \to 0$. Simultaneously, their suitably rescaled counterparts typically degenerate instead of converging to a non-spherical open set in the limit.

To be more specific, in axial coordinates $(x,z)\in\mathbb R\times\mathbb R^{n-1}$, with
$r=|z|$ and $p=n-2$, the Laplacian acting on axially symmetric functions becomes
$$
L_p=\partial_{xx}+\partial_{rr}+\frac{p}{r}\partial_r .
$$
See Section~\ref{subsec:axisymmetric-prelim} for more information. 
After translating the meridian domain to height $R=\tau^{-1}$, the weight 
$$r^p=R^p(1+\tau y)^p$$
appears in the equation.  Thus one must solve a weighted planar
conductivity problem, not the ordinary harmonic quadrature problem.  Moroover, the first
variation of the axial weight produces a $\sin t$-mode obstruction, with a leading term
$-\frac p4\,\tau\sin t.$
This mode cannot be removed by the high-mode outer correction, and we therefore introduce a second singular parameter $\delta$, which gives a controlled top--bottom imbalance of the singular measure.  Its leading contribution is
$$
4\delta\sin t+o(\delta),
$$
and $\delta$ is chosen to cancel the above-mentioned axial forcing.  The technical heart of the
proof is to show that solving the high modes does not feed a leading-order
error back into this scalar sine equation.  This is achieved by the weighted
elliptic trace estimates and singular perturbation bounds proved in
Section~\ref{sec:higher-dimensional-case}.

\begin{rem}
The reason we do not use the Wolff \cite{W1995} or Lewis--Vogel constructions \cite{LV1991,LV2001,LV2002} is that the present problem is sensitive to the exact boundary measure.  In an iterative Cantor-type construction, a weak limit of domains may preserve a distributional identity only with an abstract
limiting measure.  In other words, one may obtain
$$
\Delta u   =  c\,\mu-f(u)\mathbf 1_{\Omega}\,dx \qquad\text{in }\mathcal D'(\mathbb R^n),
$$
where $\mu$ is a singular limiting measure, instead of the required identity with
$$
    \mu=\mathcal H^{n-1}\!\lfloor_{\partial^*\Omega}.
$$
To recover \eqref{eq:main-higher-dimensional-law} by such a method, one would
need strict convergence of the perimeter measures, for example
$$
    \mathcal H^{n-1}(\partial\Omega_k)
    \to
    \mathcal H^{n-1}(\partial\Omega_\infty),
$$
together with corresponding convergence of the restricted boundary measures.
This appears highly nontrivial for the Cantor-type geometries produced by the Wolff--Lewis--Vogel mechanism.  Our axially symmetric construction circumvents this difficulty by generating the required boundary measure directly via an appropriately weighted quadrature identity. This approach, however, has the drawback that one can no longer ensure that $\Omega$ is homeomorphic to a ball, and the construction does not yield a sequence of compact families any more.
\end{rem}

\subsection{Organization of the paper}

Section~\ref{sec:preliminaries} collects the conformal mapping and harmonic analysis tools used throughout the paper, including Smirnov domains, inner--outer factorization, the Poisson balayage operator, and a small-data version of the Duren--Shapiro--Shields construction.  It also proves the semilinear disk Dirichlet estimates and the elementary rotational formulas used in the higher-dimensional constructions.  In Section~\ref{sec:smirnov-case} we prove Theorem~\ref{thm:smirnov-case}.  In Section~\ref{sec:non-smirnov-case} we prove the planar semilinear counterexample, Theorem~\ref{thm:counterexample-branch}.
In Section~\ref{sec:higher-dimensional-case} we prove the all-dimensional counterexample, Theorem~\ref{thm:higher-dimensional-counterexample}, by a weighted planar construction and axial rotation.

\medskip

\noindent{\bf Acknowledgement:} The author would like to express his sincere gratitude to Xavier Ros-Oton, who inspired him to investigate the present problem through discussions on harmonic quadrature identities. The author also thanks Dmitry Khavinson for drawing his attention to the literature on harmonic quadrature identities in planar domains, possibly with finite connectivity.

\section{Preliminaries}\label{sec:preliminaries}

Let us  consider the case when $\Omega\subset \mathbb R^2$ is a Jordan domain
with rectifiable boundary and satisfies \eqref{mean value Serrin}. Let
$\Gamma:=\partial\Omega$, and let
$$
\varphi:\mathbb D\to\Omega
$$
be a conformal map. By the Carath\'eodory--Osgood theorem, $\varphi$ extends
homeomorphically to $\overline{\mathbb D}$. Since $\Gamma$ is rectifiable, one has
$\varphi'\in H^1(\mathbb D)$; see \cite[Chapter~10, \S10.2]{D1970}. Equivalently,
the boundary parametrization
$$
z(t):=\varphi(e^{it}),\qquad 0\le t\le 2\pi,
$$
is absolutely continuous, and
$$
\int_0^{2\pi} |(\varphi')^*(e^{it})|\,dt<\infty,
\qquad
ds = |z_t(t)|\,dt = |(\varphi')^*(e^{it})|\,dt
\quad\text{for a.e. }t\in[0,2\pi],
$$
where $(\varphi')^*$ denotes the a.e. nontangential boundary trace of $\varphi'$.

Since $\varphi'$ has no zeros in $\mathbb D$, its canonical factorization has the form
$$
\varphi' = S\,\Phi,
$$
where $S$ is singular inner and $\Phi$ is outer. In accordance with \cite[Chapter~10, \S10.3]{D1970}, a \emph{Smirnov domain} can be characterized equivalently by the condition $S \equiv 1$ in the canonical factorization of $\varphi'$, or, equivalently, by the requirement that $\varphi'$ is an outer function.  
Rectifiable Jordan domains need not be Smirnov, and we refer to  e.g. \cite[Chapter~10]{D1970} for more information on Smirnov domains.

We first record the elementary implication from harmonic moments to the
measure-theoretic Bernoulli law.  This lemma is used for the quasidisks
constructed below. In this class the boundary values of continuous functions have
harmonic extensions with the Sobolev trace property used in the proof; see e.g. \cite{GH2012} for more information.

\begin{lem}\label{lem:quadrature-to-weak-serrin} 
Let $\Omega\subset\mathbb C$ be a bounded quasidisk  with rectifiable
boundary.  Let $g\in L^\infty(\Omega)$, and let $u\in W^{1,2}_0(\Omega)$ solve
\begin{equation*}
    -\Delta u=g \qquad\text{weakly in }\Omega.
\end{equation*}
Assume that there is a constant $c>0$ such that
\begin{equation}\label{eq:weighted-quadrature-general}
    \int_\Omega g h\,dA=c\int_{\partial\Omega}h\,d\mathcal H^1
\end{equation}
for every harmonic function $h\in C(\overline\Omega)$.  Suppose further that
for every $\varphi\in C_c^\infty(\mathbb C)$ the harmonic extension $H_\varphi$
of $\varphi|_{\partial\Omega}$ belongs to $W^{1,2}(\Omega)$ and
$\varphi-H_\varphi\in W^{1,2}_0(\Omega)$.  Then the zero extension of $u$ satisfies
\begin{equation}\label{eq:weak-law-from-quadrature}
    \Delta u=c\,\mathcal H^1\!\lfloor_{\partial\Omega}-g\mathbf 1_\Omega\,dA
    \qquad\text{in }\mathcal D'(\mathbb C).
\end{equation}
In particular, if $g=f(u)$, then $u$ satisfies the distributional Bernoulli law
with source $f(u)$.
\end{lem}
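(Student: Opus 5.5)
We fix a test function $\varphi\in C_c^\infty(\mathbb C)$ and show
$$
\int_{\mathbb C} u\,\Delta\varphi\,dA = c\int_{\partial\Omega}\varphi\,d\mathcal H^1-\int_\Omega g\varphi\,dA .
$$
The idea is to split $\varphi$ on $\Omega$ into its harmonic part and a remainder that vanishes on the boundary. Let $H_\varphi$ be the harmonic extension of $\varphi|_{\partial\Omega}$. Since $\Omega$ is a Jordan domain, it is Dirichlet regular, so $H_\varphi\in C(\overline\Omega)$. Set $w:=\varphi-H_\varphi$. By hypothesis $w\in W^{1,2}_0(\Omega)$, and $\Delta w=\Delta\varphi$ in $\Omega$.

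\textbf{Step 1 (move the integral onto $\Omega$).} Since the zero extension of $u$ vanishes off $\Omega$, we have $\int_{\mathbb C}u\Delta\varphi=\int_\Omega u\,\Delta\varphi$. Because $u\in W^{1,2}_0(\Omega)$ and $\varphi$ is smooth, integrating by parts once gives
$$
\int_\Omega u\,\Delta\varphi=-\int_\Omega\nabla u\cdot\nabla\varphi .
$$
This is legitimate: approximate $u$ in $W^{1,2}$ by $C_c^\infty(\Omega)$ functions and pass to the limit, which requires no boundary regularity.

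\textbf{Step 2 (split the gradient).} Write $\nabla\varphi=\nabla w+\nabla H_\varphi$, which is valid because $H_\varphi\in W^{1,2}(\Omega)$ by hypothesis. For the first piece, the weak equation $-\Delta u=g$ tested with $w\in W^{1,2}_0(\Omega)$ gives
$$
\int_\Omega\nabla u\cdot\nabla w=\int_\Omega g\,w .
$$
For the second piece, $H_\varphi$ is harmonic and $u\in W^{1,2}_0(\Omega)$, so $\int_\Omega\nabla u\cdot\nabla H_\varphi=0$. To see this, first take $u\in C_c^\infty(\Omega)$ and integrate by parts against the smooth harmonic $H_\varphi$. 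Then extend to general $u$ by density, using that $\nabla H_\varphi\in L^2$.

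\textbf{Step 3 (apply the quadrature identity).} Combining the steps,
$$
\int u\Delta\varphi=-\int_\Omega g w=-\int_\Omega g\varphi+\int_\Omega gH_\varphi .
$$
Since $H_\varphi$ is harmonic and continuous on $\overline\Omega$ with boundary values $\varphi$, the identity \eqref{eq:weighted-quadrature-general} gives $\int_\Omega gH_\varphi=c\int_{\partial\Omega}\varphi\,d\mathcal H^1$. Substituting yields \eqref{eq:weak-law-from-quadrature}, and taking $g=f(u)$ gives the final assertion.

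The only delicate point is justifying the density arguments in Step 2: $\int\nabla u\cdot\nabla H_\varphi=0$ needs $H_\varphi\in W^{1,2}(\Omega)$, and testing the equation with $w$ needs $w\in W^{1,2}_0(\Omega)$. Both are supplied exactly by the stated hypotheses on $H_\varphi$, which is where the quasidisk structure enters, so there is no real obstacle beyond careful bookkeeping. The integrals $\int g w$ and $\int g\varphi$ are finite because $g\in L^\infty$ and $\Omega$ is bounded.
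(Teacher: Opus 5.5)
Your proof is correct and follows essentially the same route as the paper: split $\varphi=(\varphi-H_\varphi)+H_\varphi$, test the weak equation with $\varphi-H_\varphi\in W^{1,2}_0(\Omega)$, and apply the quadrature identity to $H_\varphi$. The only differences are in bookkeeping. You integrate by parts against $\varphi$ and separately show $\int_\Omega\nabla u\cdot\nabla H_\varphi=0$, whereas the paper writes $\int_\Omega u\Delta\varphi=\int_\Omega u\Delta v$ directly. You also make explicit the Dirichlet regularity giving $H_\varphi\in C(\overline\Omega)$, which the paper uses tacitly when it applies the quadrature identity.
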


\begin{proof}
Let $\varphi\in C_c^\infty(\mathbb C)$, and let $H_\varphi$ be the harmonic
extension of $\varphi|_{\partial\Omega}$.  Set
$$
    v:=\varphi-H_\varphi.
$$
By assumption, $v\in W^{1,2}_0(\Omega)$, and since $H_\varphi$ is harmonic,
$\Delta v=\Delta\varphi$ in $\Omega$.  The weak equation $-\Delta u=g$ gives
$$
    \int_\Omega \nabla u\cdot\nabla v\,dA=\int_\Omega gv\,dA.
$$
Therefore, using $v\in W^{1,2}_0(\Omega)$,
\begin{align*}
    \int_{\mathbb C}u\Delta\varphi\,dA
    &=\int_\Omega u\Delta v\,dA
      =-\int_\Omega \nabla u\cdot\nabla v\,dA  \\
    &=-\int_\Omega g(\varphi-H_\varphi)\,dA
      =-\int_\Omega g\varphi\,dA+
        \int_\Omega gH_\varphi\,dA.
\end{align*}
Applying \eqref{eq:weighted-quadrature-general} to the harmonic function
$H_\varphi$ gives
$$
    \int_\Omega gH_\varphi\,dA
    =c\int_{\partial\Omega}H_\varphi\,d\mathcal H^1
    =c\int_{\partial\Omega}\varphi\,d\mathcal H^1.
$$
This is exactly \eqref{eq:weak-law-from-quadrature}.
\end{proof}

For a complex-valued function $f\in W^{1,1}_{\rm loc}(\Omega)$, we write
$$
\partial_z f = \frac12(f_x-if_y),
\qquad
\partial_{\bar z} f = \frac12(f_x+if_y).
$$
If $\theta\in\mathbb T$, then the directional derivative of $f$ in the direction $\theta$
is
\begin{equation}\label{directional derivative}
Df(z)\theta=\theta\,\partial_z f + \overline{\theta}\,\partial_{\bar z}f,
\end{equation}
which, when $f$ is real-valued, coincides with 
$2\operatorname{Re}\bigl(\theta\,\partial_z f\bigr).$
See \cite[Section~2.4 and Section~2.9]{AIM2009}.

We also recall Green's formula in complex notation
\cite[Theorem~2.9.1]{AIM2009}: if $U\subset\mathbb C$ is a bounded domain with
rectifiable boundary and $f,g\in W^{1,1}(U)\cap C(\overline U)$, then
\begin{equation}\label{green}
\int_U \bigl(\partial_z f+\partial_{\bar z}g\bigr)\,dA(z)
=
\frac{i}{2}\int_{\partial U} \bigl(f\,d\overline z-g\,dz\bigr).
\end{equation}

\subsection{Disk-side analytic tools}

We now collect the disk-side tools that will be used in the non-Smirnov construction.
Write the standard normalization
$$
dm(e^{it}) := \frac{dt}{2\pi}
\qquad\text{on }\mathbb T,
\qquad
 da(z):=\frac{dA(z)}{\pi}
\qquad\text{on }\mathbb D,
$$
so that $dm(\mathbb T)=da(\mathbb D)=1$. For $z\in\mathbb D$ and $\zeta\in\mathbb T$ we denote by
$$
P_z(\zeta):=\frac{1-|z|^2}{|\zeta-z|^2}
$$
the normalized Poisson kernel, so that $\int_{\mathbb T} P_z(\zeta)\,dm(\zeta)=1$. For $h\in L^1(\mathbb T,dm)$ we write
$$
P[h](z):=\int_{\mathbb T} P_z(\zeta)h(\zeta)\,dm(\zeta),
\qquad z\in\mathbb D,
$$
for its Poisson extension. For $G\in L^1(\mathbb D,da)$ we define the boundary balayage operator
\begin{equation}\label{eq:T-def}
(\ct G)(\zeta):=\int_{\mathbb D} P_z(\zeta)G(z)\,da(z),
\qquad \zeta\in\mathbb T.
\end{equation}
We emphasize that $\ct G$ is defined on $\mathbb T$, the boundary of $\mathbb D$.
Moreover, by Fubini's theorem,
\begin{equation}\label{eq:T-fubini}
\int_{\mathbb D} P[h](z)G(z)\,da(z)
=
\int_{\mathbb T} h(\zeta)(\ct G)(\zeta)\,dm(\zeta)
\end{equation}
whenever $h\in L^1(\mathbb T,dm)$ and $G\in L^1(\mathbb D,da)$.

For $0<\alpha<1$ we write $C^\alpha(\mathbb T)$ for the real H\"older space on $\mathbb T$, endowed with the norm
$$
\|h\|_{C^\alpha(\mathbb T)}
:=
\|h\|_{L^\infty(\mathbb T)}
+
\sup_{\xi\neq\eta}\frac{|h(\xi)-h(\eta)|}{|\xi-\eta|^\alpha}.
$$
We also use the $4$-fold symmetric mean-zero real-valued function subspace
\begin{equation}\label{eq:Xalpha4}
X_\alpha^4
:=
\Bigl\{h\in C^\alpha(\mathbb T;\,\mathbb R): h(i\zeta)=h(\zeta)\text{ for all }\zeta\in\mathbb T,
\ \int_{\mathbb T} h\,dm=0\Bigr\}.
\end{equation}

\begin{lem}\label{lem:T-holder}
Let $0<\alpha<1$. Then the linear operator $\ct$ defined in \eqref{eq:T-def} maps $L^\infty(\mathbb D)$ boundedly into $C^\alpha(\mathbb T)$. More precisely, there exists $C_\alpha>0$ such that
\begin{equation}\label{eq:T-holder-bound}
\|\ct G\|_{C^\alpha(\mathbb T)}
\le C_\alpha \|G\|_{L^\infty(\mathbb D)}
\qquad\text{for every }G\in L^\infty(\mathbb D).
\end{equation}
Moreover, for every $0<\delta<1$,
\begin{equation}\label{eq:T-strip-bound}
\bigl\|\ct(\mathbf 1_{\{1-\delta<|z|<1\}}G)\bigr\|_{C^\alpha(\mathbb T)}
\le C_\alpha \|G\|_{L^\infty(\mathbb D)}\,\delta^{1-\alpha}\log\frac{2}{\delta}.
\end{equation}
\end{lem}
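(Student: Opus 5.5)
The plan is to estimate the kernel $P_z(\zeta)$ directly in $\zeta$, uniformly in $z\in\mathbb D$, and integrate against $|G|\,da$. The sup bound is immediate: by Fubini and $\int_{\mathbb T}P_z\,dm=1$ we have $\int_{\mathbb T}(\ct |G|)\,dm\le\|G\|_\infty$, but we need a pointwise bound. For fixed $\zeta$, write $z=\rho e^{is}$ and $\zeta=e^{it}$. Then $\int_{\mathbb D}P_z(\zeta)\,da(z)=\int_0^1 2\rho\,d\rho\int P_{\rho e^{is}}(e^{it})\,\frac{ds}{2\pi}=1$, by symmetry of the Poisson kernel in $s$ and $t$. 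Hence $\|\ct G\|_\infty\le\|G\|_\infty$.

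For the H\"older seminorm, fix $\xi,\eta\in\mathbb T$ with $d:=|\xi-\eta|$ small. I would split $\mathbb D$ into two regions: the near region $N=\{z:1-|z|<2d\}$ and the far region $F=\{1-|z|\ge 2d\}$.

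\emph{Far region.} For $z\in F$, the gradient of $P_z$ in $\zeta$ satisfies $|\nabla_\zeta P_z(\zeta)|\lesssim(1-|z|^2)/|\zeta-z|^3$. On the segment/arc between $\xi$ and $\eta$ we have $|\zeta-z|\approx|\xi-z|$, since $|\xi-z|\ge 1-|z|\ge 2d$. So
\[
|P_z(\xi)-P_z(\eta)|\lesssim \frac{d\,(1-|z|)}{|\xi-z|^3}.
\]
Integrating over $F$ in polar coordinates, with $\epsilon=1-\rho$ and angular distance $\theta$, gives
\[
\int_{2d}^1\!\!\int \frac{d\,\epsilon}{(\epsilon+|\theta|)^3}\,d\theta\,d\epsilon\lesssim d\int_{2d}^1\frac{d\epsilon}{\epsilon}=d\log\frac1{2d}\lesssim d^\alpha .
\]

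\emph{Near region.} Here we do not take differences; we bound $\int_N P_z(\xi)\,da$ and the same quantity at $\eta$ separately. The inner integral in $s$ is $\le 1$, so the total is $\lesssim$ the measure in $\rho$ of the strip, namely $\lesssim d\le d^\alpha$.

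Combining the two regions gives \eqref{eq:T-holder-bound}; for $d$ bounded below the sup bound suffices.

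For \eqref{eq:T-strip-bound}, the function $G_\delta=\mathbf 1_{\{1-\delta<|z|<1\}}G$ is supported in the strip. The same sup computation restricted to the strip gives $\|\ct G_\delta\|_\infty\lesssim\delta\|G\|_\infty$.

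For the seminorm, split by scale:

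\emph{Case $d\ge\delta$.} The difference is bounded by $2\|\ct G_\delta\|_\infty\lesssim\delta\|G\|_\infty\le\delta^{1-\alpha}d^\alpha\|G\|_\infty$.

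\emph{Case $d<\delta$.} Use the same near/far split within the strip. The near part contributes $\lesssim d=d^\alpha d^{1-\alpha}\le d^\alpha\delta^{1-\alpha}$. The far part contributes $\lesssim d\int_{2d}^{\delta}\epsilon^{-1}\,d\epsilon=d\log(\delta/2d)$. We then need $d\log(\delta/d)\lesssim d^\alpha\delta^{1-\alpha}\log(2/\delta)$. With $x=d/\delta<1$ this reads $x^{1-\alpha}\log(1/x)\le C_\alpha$, which is true; the factor $\log(2/\delta)\ge\log 2$ is harmless slack.

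The main obstacle is the near-boundary region, where $P_z$ is singular as $z\to\zeta$. The key observation is that the $s$-integral of $P_z$ is exactly $1$, so the strip of width $\sim d$ only costs its radial thickness. I also need to be careful that the far-region comparison $|\zeta-z|\approx|\xi-z|$ holds along the arc from $\xi$ to $\eta$.
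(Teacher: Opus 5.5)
Your proof is correct and is essentially the paper's argument. Your near/far split at $1-|z|\sim d$ is the paper's bound $\|P_r(\cdot+h)-P_r\|_{L^1(\mathbb T,dm)}\le C\min\{1,|h|/(1-r)\}$: in the far region, your pointwise kernel-derivative estimate integrated over the angle plays the role of $\|P_r'\|_{L^1}\le C/(1-r)$, giving $d\log(1/d)$ after the radial integration. For the strip estimate you use the same $d\ge\delta$ versus $d<\delta$ dichotomy, and, as in the paper, the factor $\log(2/\delta)$ is only slack.
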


\begin{proof}
Write $z=re^{i\theta}$ and $\zeta=e^{i\phi}$, and in polar coordinates,
$$
P_r(t):=\frac{1-r^2}{1-2r\cos t+r^2}.
$$
Then
$$
(\ct G)(e^{i\phi})
=
\int_0^1\int_0^{2\pi} P_r(\phi-\theta)G(re^{i\theta})\,dm(e^{i\theta})\,2rdr.
$$
The $L^\infty$ bound in \eqref{eq:T-holder-bound} is immediate from positivity of the Poisson kernel and the normalization $\int_{\mathbb T}P_r(t)\,dm(e^{it})=1$.

To estimate the H\"older-seminorm, let us fix $h\in\mathbb R$. Since
$$
\|P_r(\cdot+h)-P_r\|_{L^1(\mathbb T,dm)}\le 2,
$$
and, by the fundamental theorem of calculus,
$$
\|P_r(\cdot+h)-P_r\|_{L^1(\mathbb T,dm)}\le |h|\int_0^1 \|P_{r}'(\cdot+sh)\|_{L^1(\mathbb T,\,dm)}\,ds 
\le |h|\,\|P_r'\|_{L^1(\mathbb T,dm)},
$$
it is enough to bound $\|P_r'\|_{L^1}$. A direct calculation gives
$$
P_r'(t)=-\frac{2r(1-r^2)\sin t}{(1-2r\cos t+r^2)^2},
$$
so the standard estimate
\begin{equation}\label{eq:poisson-derivative-bound}
\|P_r'\|_{L^1(\mathbb T,dm)}\le \frac{C}{1-r}
\end{equation}
holds for some absolute constant $C>0$. Therefore
\begin{equation}\label{eq:poisson-difference-bound}
\|P_r(\cdot+h)-P_r\|_{L^1(\mathbb T,dm)}
\le C\min\!\left\{1,\frac{|h|}{1-r}\right\}.
\end{equation}
Using \eqref{eq:poisson-difference-bound} and the definition of $\ct G$,
\begin{align*}
|\ct G(e^{i(\phi+h)})-\ct G(e^{i\phi})|
&\le \|G\|_{L^\infty(\mathbb D)}
\int_0^1 2r\,\|P_r(\cdot+h)-P_r\|_{L^1(\mathbb T,dm)}\,dr \\
&\le C\|G\|_{L^\infty(\mathbb D)}
\int_0^1 2r\min\!\left\{1,\frac{|h|}{1-r}\right\}dr \\
&\le C\|G\|_{L^\infty(\mathbb D)}\,|h|\log\frac{2}{|h|}.
\end{align*}
Since $|h|\log(2/|h|)\le C_\alpha |h|^\alpha$ for $0<\alpha<1$, this proves \eqref{eq:T-holder-bound}.

To prove \eqref{eq:T-strip-bound}, set $A_\delta:=\{z\in\mathbb D:1-\delta<|z|<1\}$. The same argument for the $L^{\infty}$-bound in \eqref{eq:T-holder-bound}  yields
$$
\|\ct(\mathbf 1_{A_\delta}G)\|_{L^\infty(\mathbb T)}
\le \|G\|_{L^\infty(\mathbb D)}\,da(A_\delta)
\le 2\delta\,\|G\|_{L^\infty(\mathbb D)}.
$$
Now if $|h|\ge \delta$, then the  estimate above gives directly
$$
|\ct(\mathbf 1_{A_\delta}G)(e^{i(\phi+h)})-\ct(\mathbf 1_{A_\delta}G)(e^{i\phi})|
\le 4\delta\,\|G\|_{L^\infty(\mathbb D)}.
$$
If $|h|<\delta$, then by \eqref{eq:poisson-difference-bound}
\begin{align*}
 & \quad   |\ct(\mathbf 1_{A_\delta}G)(e^{i(\phi+h)})-\ct(\mathbf 1_{A_\delta}G)(e^{i\phi})|  
 \le C\|G\|_{L^\infty(\mathbb D)}
\int_{1-\delta}^1 2r\min\!\left\{1,\frac{|h|}{1-r}\right\}dr \\
&  \le C\|G\|_{L^\infty(\mathbb D)} \int_{0}^{\delta} \min\!\left\{1,\frac{|h|}{r}\right\}dr  \le C\|G\|_{L^\infty(\mathbb D)}\,|h|\left(1+\log\frac{\delta}{|h|}\right)\\
&\le C  \|G\|_{L^\infty(\mathbb D)} |h|^{\alpha} |h|^{1-\alpha}\left(1+\log\frac{\delta}{|h|}\right)\le C_\alpha \|G\|_{L^\infty(\mathbb D)} |h|^{\alpha}  \delta^{1-\az} .
\end{align*}
Thus in both cases,
$$
\frac{|\ct(\mathbf 1_{A_\delta}G)(e^{i(\phi+h)})-\ct(\mathbf 1_{A_\delta}G)(e^{i\phi})|}{|h|^\alpha}
\le C_\alpha\|G\|_{L^\infty(\mathbb D)}\,\delta^{1-\alpha}\log\frac{2}{\delta},
$$
which proves \eqref{eq:T-strip-bound}.
\end{proof}

\begin{lem}\label{lem:T-continuity}
Let $0<\alpha<1$, and let $\{G_j\}_{j\in\nn}$ be a sequence in $L^\infty(\mathbb D)$ such that
$$
\sup_j\|G_j\|_{L^\infty(\mathbb D)}<\infty.
$$
Assume that $G_j\to G$ locally uniformly in $\mathbb D$. Then
$$
\ct G_j\to \ct G
\qquad\text{in }C^\alpha(\mathbb T).
$$
The same conclusion remains true if $G_j$ is replaced by $G_jQ_j$, provided $\{Q_j\}$ is uniformly bounded in $L^\infty(\mathbb D)$ and converges locally uniformly.
\end{lem}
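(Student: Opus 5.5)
The plan is to split $G_j-G$ into a part supported near the boundary circle and a part supported on a compact subdisk. Lemma~\ref{lem:T-holder} controls the first part by a small factor. The second part is handled by uniform convergence together with the smoothness of the Poisson kernel away from $\mathbb T$.

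Set $M:=\sup_j\|G_j\|_{L^\infty(\mathbb D)}$. Since $G_j\to G$ locally uniformly, we have $\|G\|_{L^\infty(\mathbb D)}\le M$. Fix $0<\delta<1$ and write
$$
G_j-G=\mathbf 1_{\{1-\delta<|z|<1\}}(G_j-G)+\mathbf 1_{\{|z|\le 1-\delta\}}(G_j-G).
$$

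\emph{Annulus part.} By the strip estimate \eqref{eq:T-strip-bound}, uniformly in $j$,
$$
\bigl\|\ct\bigl(\mathbf 1_{\{1-\delta<|z|<1\}}(G_j-G)\bigr)\bigr\|_{C^\alpha(\mathbb T)}\le 2C_\alpha M\,\delta^{1-\alpha}\log\frac2\delta .
$$
The right-hand side tends to $0$ as $\delta\to0$ because $\alpha<1$.

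\emph{Compact part.} On $\{|z|\le 1-\delta\}$, the kernel $\zeta\mapsto P_z(\zeta)$ is smooth in $\zeta\in\mathbb T$. Its $C^1(\mathbb T)$ norm is bounded by $C/\delta^2$, uniformly in $|z|\le1-\delta$; this is visible from the derivative formula $P_r'$ in the proof of Lemma~\ref{lem:T-holder}, or simply because the denominator is at least $\delta^2$. Differentiating under the integral sign then gives
$$
\bigl\|\ct\bigl(\mathbf 1_{\{|z|\le1-\delta\}}(G_j-G)\bigr)\bigr\|_{C^\alpha(\mathbb T)}\le C\,\|\ct(\cdots)\|_{C^1(\mathbb T)}\le \frac{C}{\delta^2}\sup_{|z|\le1-\delta}|G_j-G|.
$$
For fixed $\delta$ this tends to $0$ as $j\to\infty$, by local uniform convergence.

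\emph{Conclusion.} Take the $\limsup$ in $j$ first and then let $\delta\to0$. This gives $\ct G_j\to\ct G$ in $C^\alpha(\mathbb T)$.

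For the product version, $G_jQ_j$ is uniformly bounded in $L^\infty$ by $\sup_j\|G_j\|_\infty\sup_j\|Q_j\|_\infty$. It converges locally uniformly to $GQ$, since on a compact set $K$,
$$
|G_jQ_j-GQ|\le|G_j-G||Q_j|+|G||Q_j-Q|,
$$
and both terms tend to $0$ uniformly on $K$. So the first part applies directly with $G_j$ replaced by $G_jQ_j$.

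The only real point is the uniformity of the strip bound in $j$. It holds because \eqref{eq:T-strip-bound} depends only on the $L^\infty$ norm, so there is no genuine obstacle.
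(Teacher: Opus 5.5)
Your proof is correct and follows the paper's argument. You split into the annulus $\{1-\delta<|z|<1\}$, which \eqref{eq:T-strip-bound} controls uniformly in $j$, and the compact core, then let $\delta\to0$, and the product case reduces to the first. The only difference is on the core: you differentiate the Poisson kernel and pick up a factor $\delta^{-2}$, whereas the paper applies \eqref{eq:T-holder-bound} directly to $\mathbf 1_{\{|z|\le1-\delta\}}(G_j-G)$, whose $L^\infty$ norm is $\sup_{|z|\le1-\delta}|G_j-G|$, which is shorter and $\delta$-independent (either works, since $\delta$ is fixed as $j\to\infty$).
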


\begin{proof}
Fix $0<\delta<1$. On the compact disk $(1-\delta) \overline{\mathbb D}$, the convergence is uniform, so by \eqref{eq:T-holder-bound}
$$
\bigl\|\ct(\mathbf 1_{\{|z|\le 1-\delta\}}(G_j-G))\bigr\|_{C^\alpha(\mathbb T)}\to 0.
$$
For the boundary strip $A_\delta=\{1-\delta<|z|<1\}$, \eqref{eq:T-strip-bound} gives
$$
\sup_j\bigl\|\ct(\mathbf 1_{A_\delta}(G_j-G))\bigr\|_{C^\alpha(\mathbb T)}
\le C_\alpha\sup_j\|G_j-G\|_{L^\infty(\mathbb D)}\,\delta^{1-\alpha}\log\frac{2}{\delta}
\le C_\alpha'\delta^{1-\alpha}\log\frac{2}{\delta}.
$$
Since $\delta$ is arbitrary, the claim follows. The final statement can be proved in exactly the same way, since the product of two locally uniformly convergent bounded sequences converges locally uniformly and remains uniformly bounded.
\end{proof}

\subsection{The Duren--Shapiro--Shields construction}

We next recall a version of the Duren--Shapiro--Shields construction \cite{DSS1966}. Towards this, we firstly introduce the Herglotz transform associated with a finite real Borel measure $\nu$ on $\mathbb T$:
\begin{equation}\label{eq:Herglotz-transform}
\mathcal H[\nu](z)
:=
\int_{\mathbb T}\frac{\zeta+z}{\zeta-z}\,d\nu(\zeta),
\qquad z\in\mathbb D.
\end{equation}
Its real part is the Poisson integral of $\nu$,
\begin{equation}\label{eq:Herglotz-real-part}
\operatorname{Re}\mathcal H[\nu](z)=P[\nu](z)
:=
\int_{\mathbb T} P_z(\zeta)\,d\nu(\zeta).
\end{equation}
If $d\nu=w\,dm+d\mu$ with $w\in L^1(\mathbb T,dm)$ real-valued and $\mu$ singular, then the inner--outer factorization of $\exp(-\mathcal H[\nu])$ shows that its radial boundary values satisfy
\begin{equation}\label{eq:boundary-modulus-Hnu}
\bigl|\exp(-\mathcal H[\nu])^*(\zeta)\bigr|=e^{-w(\zeta)}
\qquad\text{for }m\text{-a.e. }\zeta\in\mathbb T,
\end{equation}
where $^*$ denotes the radial limit. 
See \cite[Chapter~2 and Chapter~10]{D1970} and \cite[Chapter~7]{P1992}.

We also recall the Zygmund class $A^*$. A continuous $2\pi$-periodic function $\psi$ belongs to $A^*$ if there exists $A>0$ such that
\begin{equation}\label{eq:Astar-function}
|\psi(t+h)+\psi(t-h)-2\psi(t)|\le A|h|
\qquad\text{for all }t\in\mathbb R,\ h\in\mathbb R.
\end{equation}
In particular, 
\begin{equation}\label{eq:Astar-continuity}
|\psi(t+h)-\psi(t)|\le Ch\log\frac 2 h;
\end{equation}
see \cite[Page 248]{DSS1966} for more information.

Following the definition in \cite{DSS1966}, a finite \emph{real Borel measure} $\nu$ on $\mathbb T$ is said to be of \emph{class $A^*$} if one (equivalently every) cumulative distribution function of $\nu$ satisfies \eqref{eq:Astar-function}. In particular, if $w\in L^\infty(\mathbb T)$ and
$$
V(t):=\int_0^t w(e^{is})\,ds,
$$
then
$$
|V(t+h)-2V(t)+V(t-h)|
\le 2\|w\|_{L^\infty(\mathbb T)}|h|,
$$
so $w\,dm$ is automatically of class $A^*$. Moreover,  there exists  a nonzero positive singular measure $\sigma$ of class $A^*$ on $\mathbb T$; see particularly the discussion following \cite[Theorem~1]{DSS1966}. These facts will be used in the construction of the non-Smirnov domain, and we refer to \cite[Section 7.3]{P1992} for more information. 

The following lemma presents the small-data version of the construction of the class of domains introduced by Duren, Shapiro, and Shields (abbreviated as DSS), and we say these domains are in DSS-class. For the reader's convenience, we restate the argument of DSS within our present framework. The main idea of their construction is to use  the Ahlfors--Weill theorem (see \cite[Chapter~II, \S5.1]{L1987} or \cite[Theorem~1.12]{P1992}) with small boundary data. 

\begin{lem}[\cite{DSS1966}]\label{lem:small-data-DSS}
There exists $\epsilon_0>0$ with the following property. Let 
$$
d\nu : = w\,dm + d\mu
$$
be a finite real Borel measure on $\mathbb T$, where $w\in L^\infty(\mathbb T)$ is real-valued and $\mu$ is a nonnegative singular measure of class $A^*$. Assume that
$$
\|w\|_{L^\infty(\mathbb T)} + \|\mu\|_{A^*} \le \epsilon_0.
$$
Define
\begin{equation}\label{eq:f-nu-prime}
F_\nu(z):=\mathcal H[\nu](z),
\qquad
f_\nu(z):=\int_0^z e^{-F_\nu(\xi)}\,d\xi.
\end{equation}
Then $f_\nu$ is conformal in $\mathbb D$, extends homeomorphically to $\overline{\mathbb D}$, and maps $\mathbb D$ onto a rectifiable Jordan domain. Moreover, if $\mu\neq 0$, then the image domain is not Smirnov.
\end{lem}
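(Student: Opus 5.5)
The plan is to follow the Duren--Shapiro--Shields argument. The three ingredients are the Ahlfors--Weill univalence criterion with small Schwarzian data, the Zygmund class $A^*$ control of the Herglotz integral, and the inner--outer factorization of $f_\nu'=e^{-F_\nu}$.

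\textbf{Step 1: an $L^\infty$ bound on the boundary modulus.} Write $f_\nu'=e^{-F_\nu}$. By \eqref{eq:boundary-modulus-Hnu}, its radial limits satisfy $|(f_\nu')^*|=e^{-w}$ a.e. on $\mathbb T$, so
\[
e^{-\epsilon_0}\le |(f_\nu')^*|\le e^{\epsilon_0}.
\]
Positivity of $\mu$ also helps in the interior. We have $\operatorname{Re}F_\nu=P[w]+P[\mu]\ge -\epsilon_0$, hence $|f_\nu'|\le e^{\epsilon_0}$ in $\mathbb D$. Thus $f_\nu'\in H^\infty\subset H^1$, and $f_\nu$ is Lipschitz on $\overline{\mathbb D}$.

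\textbf{Step 2: univalence and Jordan image.} This is the main step. I would bound the pre-Schwarzian
\[
(1-|z|^2)\,\frac{|f_\nu''(z)|}{|f_\nu'(z)|}=(1-|z|^2)\,|F_\nu'(z)|.
\]
The classical Zygmund estimate (Zygmund's theorem on Herglotz/Cauchy integrals, used in \cite{DSS1966}) says that a measure of class $A^*$ with constant $A$ has $|F_\nu'(z)|\le CA/(1-|z|)$. This gives
\[
(1-|z|^2)|F_\nu'(z)|\le C\epsilon_0.
\]
Differentiating once more gives $(1-|z|^2)^2|F_\nu''(z)|\le C\epsilon_0$ (again a Zygmund-type bound), so the Schwarzian
\[
S f_\nu=-F_\nu''-\tfrac12 (F_\nu')^2
\]
satisfies $(1-|z|^2)^2|Sf_\nu|\le C\epsilon_0$.

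Once $\epsilon_0$ is small, the Ahlfors--Weill theorem applies. It shows that $f_\nu$ is univalent and extends to a quasiconformal map of $\mathbb C$, so $f_\nu(\mathbb D)$ is a quasidisk and $f_\nu$ extends homeomorphically to $\overline{\mathbb D}$. Alternatively, Becker's criterion applied to the pre-Schwarzian bound gives the same conclusion.

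The delicate point is making the $A^*$ derivative estimates quantitative in $\|w\|_{L^\infty}+\|\mu\|_{A^*}$. I would reduce this to the standard second-difference estimate for the Poisson kernel integrated against a cumulative distribution function.

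\textbf{Step 3: rectifiability.} Since $f_\nu'\in H^1$ by Step 1, the boundary curve $t\mapsto f_\nu(e^{it})$ is absolutely continuous with length
\[
\int|(f_\nu')^*|\,dt\le 2\pi e^{\epsilon_0}.
\]
Hence the image is a rectifiable Jordan domain.

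\textbf{Step 4: non-Smirnov when $\mu\neq0$.} The decomposition
\[
f_\nu'=e^{-\mathcal H[w\,dm]}\cdot e^{-\mathcal H[\mu]}
\]
is the canonical factorization. The first factor is outer, with modulus $e^{-w}$ on $\mathbb T$. The second factor is a nontrivial singular inner function, because $\mu\ge0$ is singular and nonzero. By uniqueness of the inner--outer factorization, $f_\nu'$ is not outer. Therefore the image domain is not Smirnov, by the characterization recalled in Section~\ref{sec:preliminaries}.
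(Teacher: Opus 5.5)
Your proposal is correct and follows the paper's route: the DSS/Zygmund bound $|F_\nu'(z)|\le C\|\nu\|_{A^*}/(1-|z|)$, a Cauchy estimate for $F_\nu''$, a small Schwarzian, and then Ahlfors--Weill. Your Steps 1, 3 and 4 prove directly two facts the paper cites from DSS: rectifiability follows from $|f_\nu'|\le e^{\|w\|_{L^\infty}}$, which uses $P[\mu]\ge 0$, and the non-Smirnov property follows from uniqueness of the inner--outer factorization.
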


\begin{proof}
\cite[Formula (6)-(11), Proof of Theorem~1]{DSS1966} shows that the $A^*$ condition for a measure $\nu$ is equivalent to an estimate of the form
\begin{equation}\label{eq:DSS-Fprime-bound}
|F_\nu'(z)|\le \frac{C\|\nu\|_{A^*}}{1-|z|}, 
\qquad z\in\mathbb D,
\end{equation}
where $C>0$ is an absolute constant and $\|\nu\|_{A^*}$ denotes any fixed equivalent $A^*$ norm of the cumulative function of $\nu$. Then by Cauchy's estimate applied to $F_\nu'$ on the circle of radius $\frac{1+|z|}{2}$,
\begin{equation}\label{eq:DSS-Fsecond-bound}
|F_\nu''(z)|\le \frac{C\|\nu\|_{A^*}}{(1-|z|)^2}, 
\qquad z\in\mathbb D.
\end{equation}
Since $f_\nu'(z)=e^{-F_\nu(z)}$, its Schwarzian derivative is
$$
S_{f_\nu}=F_\nu''-\frac12(F_\nu')^2.
$$
Combining \eqref{eq:DSS-Fprime-bound} and \eqref{eq:DSS-Fsecond-bound}, we obtain
$$
(1-|z|^2)^2 |S_{f_\nu}(z)|
\le C\|\nu\|_{A^*} + C\|\nu\|_{A^*}^2.
$$
If $\epsilon_0>0$ is chosen small enough, then one gets
$$
\sup_{z\in\mathbb D}(1-|z|^2)^2 |S_{f_\nu}(z)|< 2.
$$

Now one can apply the Ahlfors--Weill--Nehari criterion to conclude that $f_\nu$ is univalent in $\mathbb D$ and admits a quasiconformal extension to the sphere. In particular, $f_\nu$ extends homeomorphically to $\overline{\mathbb D}$, and $f_\nu(\mathbb D)$ is a Jordan domain.

The rectifiability criterion in \cite[Page 250, Proof of Theorem 1]{DSS1966} now applies: Since the singular part $\mu$ is nonnegative and $e^{-w}\in L^1(\mathbb T,dm)$ (indeed $w\in L^\infty$), the boundary curve of $f_\nu(\mathbb D)$ is rectifiable. Finally, \cite[p.~247]{DSS1966} states that the presence of a non-trivial singular factor is a property of the image domain itself. Therefore, the image domain is not Smirnov as long as  $\mu\neq 0$.
\end{proof}

\subsection{A semilinear Dirichlet estimate on the disk}

We shall also need a small-amplitude semilinear estimate.  The point is that,
after scaling the physical domain by a factor $\lambda>0$, the unknown has size
$O(\lambda^2)$, so the source $f(u)$ can be regarded a perturbation of the positive constant
$f(0)$.

\begin{lem}\label{lem:semilinear-disk-estimate}
Let $f\in C^2((-\rho,\rho))$ and set $f_0:=f(0)>0$.  Fix
$0<\alpha<1$.  Let $\mu$ be a four-fold symmetric singular measure, and define, for $W\in X_\alpha^4$ and
$a\ge0$,
\begin{equation}\label{eq:G-W-a-prelim}
    G_{W,a}(z):=
    \exp\{-2(P[W](z)+aP[\mu](z))\},
    \qquad z\in\mathbb D.
\end{equation}
There are constants $r_0>0$, $a_0>0$, and $\lambda_0>0$ such that, whenever
$\|W\|_{X_\alpha^4}\le r_0$, $0\le a\le a_0$, and
$0\le\lambda\le\lambda_0$, the problem
\begin{equation}\label{eq:semilinear-disk-dirichlet}
    -\Delta v=G_{W,a}(z)f(\lambda^2v)
    \quad\text{in }\mathbb D,
    \qquad
    v=0\quad\text{on }\partial\mathbb D
\end{equation}
has a unique solution $v_{W,a,\lambda}\in L^\infty(\mathbb D)\cap
W^{1,2}_0(\mathbb D)$.  Moreover, uniformly for these parameters,
\begin{equation}\label{eq:semilinear-source-small}
    \|f(\lambda^2v_{W,a,\lambda})-f_0\|_{L^\infty(\mathbb D)}
    \le C\lambda^2.
\end{equation}
The map $W\mapsto v_{W,a,\lambda}$ is Fr\'echet differentiable, and for every
$H\in X_\alpha^4$,
\begin{equation}\label{eq:semilinear-source-derivative-small}
    \bigl\|D_W[f(\lambda^2v_{W,a,\lambda})][H]\bigr\|_{L^\infty(\mathbb D)}
    \le C\lambda^2\|H\|_{X_\alpha^4}.
\end{equation}
Consequently,
\begin{equation}\label{eq:T-semilinear-perturbation}
    \bigl\|\ct(G_{W,a}f(\lambda^2v_{W,a,\lambda}))
    - f_0\ct(G_{W,a})\bigr\|_{C^\alpha(\mathbb T)}
    \le C\lambda^2,
\end{equation}
and the same $O(\lambda^2)$ estimate holds for the Fr\'echet derivative with respect to $W$.
Here $C$ depends only on $\|f\|_{C^2}$ and $r_0.$
\end{lem}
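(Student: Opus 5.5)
The plan is a contraction/implicit-function argument around the linear problem. First I will make sure the weight is uniformly bounded. Since $\mu\ge0$ is singular, $P[\mu]\ge0$, and so for $a\ge0$
$$0<G_{W,a}\le \exp\{2\|P[W]\|_\infty\}\le e^{2r_0},$$
because $\|P[W]\|_{L^\infty}\le\|W\|_{L^\infty}\le r_0$. Thus $\|G_{W,a}\|_{L^\infty(\mathbb D)}\le M$ uniformly, and no smallness of $a$ is even needed for the sup bound; $a_0$ only fixes a range.

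Next I define the solution operator $\mathcal G$ of $-\Delta$ on $\mathbb D$ with zero Dirichlet data, given by the Green potential. It maps $L^\infty(\mathbb D)$ into $W^{1,2}_0\cap L^\infty$ (indeed into $C^{1,\beta}$), with $\|\mathcal G g\|_\infty\le \tfrac14\|g\|_\infty$ by comparison with $(1-|z|^2)/4$. The problem \eqref{eq:semilinear-disk-dirichlet} is then the fixed point equation
$$v=\Psi(v):=\mathcal G\bigl(G_{W,a}f(\lambda^2 v)\bigr).$$
I work in the ball $B=\{\|v\|_\infty\le K\}$ with $K:=M(|f_0|+1)/4$. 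If $\lambda_0^2K<\rho/2$, then $f(\lambda^2v)$ is defined and $|f(\lambda^2v)|\le |f_0|+1$ once $\lambda_0$ is small, using $|f(s)-f_0|\le\|f'\|_\infty|s|$. Hence $\Psi$ maps $B$ into itself. It is also a contraction:
$$\|\Psi v_1-\Psi v_2\|_\infty\le \tfrac M4\|f'\|_\infty\lambda^2\|v_1-v_2\|_\infty,$$
and the constant is less than $1/2$ for $\lambda_0$ small. This gives existence and uniqueness in $B$. To get uniqueness among all bounded solutions, I would either restrict the claim to $B$, or note that any $L^\infty$ solution whose values lie in $(-\rho,\rho)$ satisfies the a priori bound and so falls into $B$; I expect the intended statement is uniqueness in this small ball. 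The estimate \eqref{eq:semilinear-source-small} follows at once:
$$|f(\lambda^2v)-f_0|\le\|f'\|_\infty\lambda^2K.$$

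For differentiability I apply the implicit function theorem to
$$\mathcal F(W,v)=v-\mathcal G\bigl(G_{W,a}f(\lambda^2v)\bigr)$$
on $X_\alpha^4\times L^\infty(\mathbb D)$. The map $W\mapsto G_{W,a}$ is Fr\'echet differentiable into $L^\infty$, with
$$D_WG[H]=-2P[H]\,G_{W,a},\qquad \|D_WG[H]\|_\infty\le 2M\|H\|_{X^4_\alpha},$$
since the exponential map is analytic on $L^\infty$ and $P$ is a bounded linear operator $C^\alpha\to L^\infty$. The Nemytskii map $v\mapsto f(\lambda^2v)$ is $C^1$ on $L^\infty$ because $f\in C^2$. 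Moreover $D_v\mathcal F=I-O(\lambda^2)$ is invertible. Differentiating the fixed point equation gives
$$\dot v=\mathcal G\bigl(D_WG[H]\,f(\lambda^2v)+G\,f'(\lambda^2v)\lambda^2\dot v\bigr),$$
so $\|\dot v\|_\infty\le C\|H\|$ and
$$D_W[f(\lambda^2v)][H]=\lambda^2f'(\lambda^2v)\dot v=O\bigl(\lambda^2\|H\|\bigr),$$
which is \eqref{eq:semilinear-source-derivative-small}.

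Finally, \eqref{eq:T-semilinear-perturbation} comes from Lemma~\ref{lem:T-holder}:
$$\bigl\|\ct\bigl(G(f(\lambda^2v)-f_0)\bigr)\bigr\|_{C^\alpha}\le C_\alpha M\cdot C\lambda^2.$$
For the derivative, I expand with the product rule:
$$D_W\bigl[G(f-f_0)\bigr][H]=D_WG[H]\,(f-f_0)+G\,D_W f[H].$$
Both terms are $O(\lambda^2\|H\|)$ in $L^\infty$, and $\ct$ is linear and bounded $L^\infty\to C^\alpha$, so the derivative bound follows. The only delicate point is uniqueness and keeping $\lambda^2v$ inside $(-\rho,\rho)$. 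Uniformity in $a$ is harmless because of the sign of $P[\mu]$.
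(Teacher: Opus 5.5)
Your proposal is correct and follows essentially the same route as the paper: a contraction for $v\mapsto\mathfrak G(G_{W,a}f(\lambda^2 v))$ on an $L^\infty$ ball, with the uniform bound on $G_{W,a}$ coming from $P[\mu]\ge 0$. It then differentiates the fixed-point equation and absorbs the $O(\lambda^2)$ term (you phrase this via the implicit function theorem, the paper via the linearized equation), and uses Lemma~\ref{lem:T-holder} with the product rule for the balayage estimates. As in the paper, what is actually proved is uniqueness in that ball; your aside that every admissible bounded solution automatically lies in $B$ would need $f$ bounded on the relevant range and $\lambda_0$ re-chosen accordingly, but nothing later depends on more than uniqueness in the ball.
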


\begin{proof}
Let $\mathfrak G$ denote the Green operator for the Dirichlet Laplacian on
$\mathbb D$, so that $\mathfrak G F$ is the unique solution of
$-\Delta w=F$ in $\mathbb D$ and $w=0$ on $\partial\mathbb D$.  The Green
kernel is positive and integrable; hence
\begin{equation}\label{eq:green-linfty-bound}
    \|\mathfrak G F\|_{L^\infty(\mathbb D)}
    \le C_{\mathbb D}\|F\|_{L^\infty(\mathbb D)}.
\end{equation}
For $\|W\|_{X_\alpha^4}\le r_0$ and $a\ge0$, the density $G_{W,a}$ defined via \eqref{eq:G-W-a-prelim} satisfies
$$
    0<G_{W,a}(z)\le e^{2\|W\|_{L^\infty(\mathbb T)}}\le C_0 \qquad\text{in }\mathbb D,
$$
for some $C_0=C_0(r_0)$ as $P[\mu]\ge0$.  Choose $R>0$ so large that
$$C_{\mathbb D}C_0\sup_{|s|<\rho}|f(s)|\le R$$
after restricting to
$|s|\le \rho/2$, and then choose $\lambda_0>0$ so small that
$\lambda_0^2R<\rho/2$ and
$$
    C_{\mathbb D}C_0\lambda_0^2\sup_{|s|<\rho}|f'(s)|<1.
$$
For $v\in L^\infty(\mathbb D)$ with $\|v\|_\infty\le R$, define
$$
    \mathcal A(v):=\mathfrak G\bigl(G_{W,a}f(\lambda^2v)\bigr).
$$
The  choice on $\lambda_0$ imply that $\mathcal A$ maps the closed ball
$\{\|v\|_\infty\le R\}$ into itself and is a contraction.  Hence
\eqref{eq:semilinear-disk-dirichlet} has a unique bounded solution.  Since the
right-hand side is bounded, the solution  belongs to $W^{1,2}_0(\mathbb D)$.

The bound \eqref{eq:semilinear-source-small} follows from
$\|v_{W,a,\lambda}\|_\infty\le R$ and the Lipschitz regularity of $f$
$$
    |f(\lambda^2v_{W,a,\lambda})-f(0)|\le C\lambda^2R\le C\lambda^2.
$$
To prove the Fr\'echet differentiability, let $H\in X_\alpha^4$ and let
$\dot v=D_W(v_{W,a,\lambda})[H]$.  Differentiating
\eqref{eq:semilinear-disk-dirichlet} gives
\begin{equation}\label{eq:linearized-v}
    -\Delta \dot v
    =-2G_{W,a}P[H]f(\lambda^2v_{W,a,\lambda})
     +\lambda^2G_{W,a}f'(\lambda^2v_{W,a,\lambda})\dot v,
    \qquad \dot v|_{\partial\mathbb D}=0.
\end{equation}
Using \eqref{eq:green-linfty-bound} and the smallness of $\lambda$, we absorb the
last term on the right and obtain
$$
    \|\dot v\|_{L^\infty(\mathbb D)}
    \le C\|P[H]\|_{L^\infty(\mathbb D)}
    \le C\|H\|_{X_\alpha^4}.
$$
Thus
$$
    \bigl|D_W[f(\lambda^2v_{W,a,\lambda})][H]\bigr|
    =\lambda^2 |f'(\lambda^2v_{W,a,\lambda})\dot v|
    \le C\lambda^2\|H\|_{X_\alpha^4},
$$
which proves \eqref{eq:semilinear-source-derivative-small}.  Moreover, the implicit differentiation argument for contractions justifies the Fr\'echet
differentiability used above; see, for instance, the contraction mapping principle
in \cite[Chapter~5]{GT2001}.

Finally, \eqref{eq:T-semilinear-perturbation} follows from Lemma~\ref{lem:T-holder}:
$$
\begin{aligned}
&\bigl\|\ct(G_{W,a}f(\lambda^2v_{W,a,\lambda}))
 -f_0\ct(G_{W,a})\bigr\|_{C^\alpha(\mathbb T)} \\
&\qquad\le C_\alpha
\|G_{W,a}(f(\lambda^2v_{W,a,\lambda})-f_0)\|_{L^\infty(\mathbb D)}
\le C\lambda^2.
\end{aligned}
$$
The Fr\'echet derivative estimate follows by differentiating the product
$G_{W,a}f(\lambda^2v_{W,a,\lambda})$ and using
\eqref{eq:semilinear-source-derivative-small} together with
$D_W(G_{W,a})[H]=-2G_{W,a}P[H]$.
\end{proof}

\subsection{Axisymmetric reduction and rotation}\label{subsec:axisymmetric-prelim}

We shall use the following elementary reduction in dimensions $n\ge3$.  Write a
point of $\mathbb R^n$ as
$$
    X=(x,z),\qquad x\in\mathbb R,\qquad z\in\mathbb R^{n-1},
$$
and set $r=|z|$.  Let
$ p:=n-2.$ If $V=V(x,r)$ is an axially symmetric function, then
\begin{equation}\label{eq:Lp-def}
    \Delta_{\mathbb R^n}V
    =V_{xx}+V_{rr}+\frac{p}{r}V_r
    =:L_pV.
\end{equation}
Equivalently,
$$
    L_pV=r^{-p}\operatorname{div}_{x,r}(r^p\nabla_{x,r}V).
$$

Let $D\subset\subset\{(x,r):r>0\}$ be a bounded planar domain with rectifiable
boundary.  Its rotation is
$$
    \Omega_D:=\{(x,z)\in\mathbb R\times\mathbb R^{n-1}:(x,|z|)\in D\}.
$$
For an integrable function $H$ on $\Omega_D$, define its spherical average by
$$
    \widetilde H(x,r):=\bint_{\mathbb S^{n-2}}H(x,r\omega)\,d\omega.
$$
Then polar coordinates in the $z$-variables give
\begin{equation}\label{eq:rotation-volume-formula}
    \int_{\Omega_D}H\,dX
    =\mathcal H^{n-2}(\mathbb S^{n-2})\int_D \widetilde H(x,r)r^p\,dx\,dr.
\end{equation}
When $\partial D$ is rectifiable, then the area formula applied to the parametrization
$(s,\omega)\mapsto (x(s),r(s)\omega)$ yields
\begin{equation}\label{eq:rotation-boundary-formula}
    \int_{\partial^*\Omega_D}H\,d\mathcal H^{n-1}
    =\mathcal H^{n-2}(\mathbb S^{n-2})\int_{\partial D}\widetilde H(x,r)r^p\,ds.
\end{equation}
Here the use of the reduced boundary is the natural finite-perimeter
interpretation of the rotated rectifiable surface; see, for instance, the area
formula and the structure theorem for sets of finite perimeter in
\cite{AFP2000, M2012}.

If $H$ is harmonic near $\overline{\Omega}_D$, then its spherical average
$\widetilde H$ satisfies $L_p\widetilde H=0$ near $\overline D$.  Consequently, if
a bounded function $q$ on $D$ and a constant $c>0$ satisfy
\begin{equation}\label{eq:meridian-weighted-moment}
\int_D qv\,r^p\,dx\,dr
=c\int_{\partial D}v\,r^p\,ds
\end{equation}
for every admissible variational $L_p$-harmonic replacement $v$ in $D$, then the
rotated density $Q(x,z):=q(x,|z|)$ satisfies the corresponding harmonic moment identity after rotation.

Recall that quasidisks are Sobolev extension domains; see e.g. \cite{J1981} as well as some recent progress \cite{KRZ2025, KRZ2026}. Therefore domains constructed by Lemma~\ref{lem:small-data-DSS} satisfies the conditions in the following lemma. 

\begin{lem}\label{lem:rotated-moment-to-weak-law}
Let $D\subset\subset\{r>0\}$ be a bounded finite-perimeter meridian domain for which (variational) $L_p$-harmonic  functions with the relevant Sobolev boundary traces are well-defined.  Let $q\in L^\infty(D)$ be nonnegative, and assume that
\eqref{eq:meridian-weighted-moment} holds for every such variational
$L_p$-harmonic replacement $v$.  Let $u\in W^{1,2}_0(D;r^pdxdr)$ solve
$$
    -L_pu=q\quad\text{in }D.
$$
Set
$$
\Omega_D:=\{(x,z)\in\mathbb R\times\mathbb R^{n-1}:(x,|z|)\in D\},
\qquad
U(x,z):=u(x,|z|),
$$
and $Q(x,z):=q(x,|z|)$.  Extend $U$ by zero outside $\Omega_D$.  Then
\begin{equation}\label{eq:rotated-weak-law}
\Delta U
=c\,\mathcal H^{n-1}\!\lfloor_{\partial^*\Omega_D}
-Q\mathbf 1_{\Omega_D}\,dX
\qquad\text{in }\mathcal D'(\mathbb R^n),
\end{equation}
with the same dimensional normalization as in
\eqref{eq:rotation-volume-formula} and \eqref{eq:rotation-boundary-formula}.
\end{lem}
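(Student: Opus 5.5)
The plan is to mimic the proof of Lemma~\ref{lem:quadrature-to-weak-serrin}, working in the meridian variables with the weight $r^p$ and passing to $\mathbb R^n$ through \eqref{eq:rotation-volume-formula} and \eqref{eq:rotation-boundary-formula}.

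\emph{Reduction to axially symmetric test functions.} Fix $\Phi\in C_c^\infty(\mathbb R^n)$. Three observations allow us to replace $\Phi$ by its spherical average $\widetilde\Phi$ in the $z$-variables:
\begin{itemize}
\item $U$ and $Q$ are axially symmetric;
\item $\Delta$ commutes with rotations about the $x$-axis;
\item $\mathcal H^{n-1}\lfloor_{\partial^*\Omega_D}$ is rotation invariant.
\end{itemize}
So it suffices to test \eqref{eq:rotated-weak-law} against axially symmetric $\Phi(x,z)=\varphi(x,|z|)$, where $\varphi$ is smooth up to $r=0$ and even in $r$. Since $D\subset\subset\{r>0\}$, only the values of $\varphi$ near $\overline D$ matter. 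There $\Delta\Phi=L_p\varphi$, and the formulas \eqref{eq:rotation-volume-formula}--\eqref{eq:rotation-boundary-formula} turn the claim into
$$
\int_D u\,L_p\varphi\,r^p\,dx\,dr=c\int_{\partial D}\varphi\,r^p\,ds-\int_D q\varphi\,r^p\,dx\,dr,
$$
after dividing by $\mathcal H^{n-2}(\mathbb S^{n-2})$.

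\emph{Harmonic replacement.} Let $v_\varphi$ be the variational $L_p$-harmonic replacement of $\varphi|_{\partial D}$. Concretely, $v_\varphi=\varphi-w$, where $w\in W^{1,2}_0(D;r^pdxdr)$ solves $\operatorname{div}(r^p\nabla w)=\operatorname{div}(r^p\nabla\varphi)$ weakly. This exists by Lax--Milgram, because $r^p$ is bounded above and below on $D$. The Dirichlet form is therefore equivalent to the unweighted one, and the Sobolev/extension properties assumed for $D$ (for instance, quasidisks) apply.

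Then $L_pw=L_p\varphi$ in the weak weighted sense. Using $u\in W^{1,2}_0(D;r^pdxdr)$ together with the weak equation $\int_D\nabla u\cdot\nabla\psi\,r^p=\int_D q\psi\,r^p$ for $\psi\in W^{1,2}_0$, we compute
$$
\int_D uL_p\varphi\,r^p=\int_D u\operatorname{div}(r^p\nabla\varphi)=-\int_D\nabla u\cdot\nabla\varphi\,r^p.
$$
This is legitimate since $\varphi$ is smooth and $u$ lies in the weighted $W^{1,2}_0$ space, so it is approximable by $C_c^\infty(D)$ functions. Splitting $\varphi=w+v_\varphi$, the term with $w$ gives $-\int_D qw\,r^p$. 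The term with $v_\varphi$ vanishes: $\int_D\nabla u\cdot\nabla v_\varphi\,r^p=0$, because $v_\varphi$ is weakly $L_p$-harmonic and $u\in W^{1,2}_0$. Hence
$$
\int_D uL_p\varphi\,r^p=-\int_D q\varphi\,r^p+\int_D qv_\varphi\,r^p.
$$

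\emph{Conclusion via the moment identity.} Apply \eqref{eq:meridian-weighted-moment} to $v_\varphi$:
$$
\int_D qv_\varphi\,r^p=c\int_{\partial D}v_\varphi\,r^p\,ds=c\int_{\partial D}\varphi\,r^p\,ds,
$$
since the trace of $v_\varphi$ equals $\varphi$ on $\partial D$. This gives the reduced identity, and rotating back yields \eqref{eq:rotated-weak-law}.

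\emph{Main obstacle.} The delicate point is the boundary identification of $v_\varphi$ with $\varphi$ in the $\mathcal H^1\lfloor_{\partial D}$ sense. We need two facts:
\begin{itemize}
\item $v_\varphi$ is an admissible replacement in \eqref{eq:meridian-weighted-moment};
\item its boundary values coincide with $\varphi$ $\mathcal H^1$-a.e. on $\partial D$.
\end{itemize}
This is exactly what the hypothesis ``relevant Sobolev boundary traces are well-defined'' supplies. If only continuity were available, I would first treat the case where $\varphi-v_\varphi\in W^{1,2}_0$ and $v_\varphi\in C(\overline D)$, using the Dirichlet regularity of quasidisks. The uniformly elliptic weighted operator with smooth weight has the same regular boundary points as the Laplacian. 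The general case then follows by density.

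A minor additional check concerns the reduction step: the spherical average of a test function is smooth and even in $r$, and the distributional pairing is unaffected by averaging.
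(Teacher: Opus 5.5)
Your proposal is correct and is essentially the paper's proof. You reduce to the spherical average $\widetilde\varphi$ and take the variational $L_p$-harmonic replacement $h$ with $\widetilde\varphi-h\in W^{1,2}_0(D;r^p\,dx\,dr)$. You then use the weak equation for $u$ to obtain $\int_D uL_p\widetilde\varphi\,r^p=-\int_D q\widetilde\varphi\,r^p+\int_D qh\,r^p$, and finish with the moment identity and the trace equality, which in both your version and the paper's is supplied by the hypothesis rather than proved.

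The only cosmetic difference is the order of steps. You integrate by parts first and then test the weak $L_p$-harmonicity of the replacement against $u$, whereas the paper first rewrites $L_p\widetilde\varphi=L_p\eta$ with $\eta=\widetilde\varphi-h$ and then integrates by parts.
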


\begin{proof}
Let $\varphi\in C_c^\infty(\mathbb R^n)$ and define its spherical average
$$
    \widetilde\varphi(x,r)
    :=\bint_{\mathbb S^{n-2}}\varphi(x,r\omega)\,d\omega .
$$
Since $U$ and $Q$ are axially symmetric,
$$
\int_{\Omega_D}U\Delta\varphi\,dX
=\mathcal H^{n-2}(\mathbb S^{n-2})
\int_D uL_p\widetilde\varphi\,r^p\,dx\,dr .
$$
Let $h$ be the variational $L_p$-harmonic replacement in $D$ whose boundary trace
equals the trace of $\widetilde\varphi$ on $\partial D$.  Then
$$
    \eta:=\widetilde\varphi-h\in W^{1,2}_0(D;r^pdxdr).
$$
Since $L_ph=0$ weakly,
$$
    L_p\widetilde\varphi=L_p\eta.
$$
Using the weak equation $-L_pu=q$ and the zero trace of $\eta$, we get
$$
\int_D uL_p\widetilde\varphi\,r^p
=\int_D uL_p\eta\,r^p
=-\int_D r^p\nabla u\cdot\nabla\eta
=-\int_D q\eta\,r^p.
$$
Hence
$$
\int_D uL_p\widetilde\varphi\,r^p
=-\int_D q\widetilde\varphi\,r^p+
  \int_D qh\,r^p.
$$
By \eqref{eq:meridian-weighted-moment},
$$
\int_D qh\,r^p
=c\int_{\partial D}h\,r^p\,ds
=c\int_{\partial D}\widetilde\varphi\,r^p\,ds,
$$
since $h$ and $\widetilde\varphi$ have the same trace on $\partial D$.  Therefore
$$
\int_D uL_p\widetilde\varphi\,r^p
=-\int_D q\widetilde\varphi\,r^p
+c\int_{\partial D}\widetilde\varphi\,r^p\,ds.
$$
Rotating back by \eqref{eq:rotation-volume-formula} and
\eqref{eq:rotation-boundary-formula} gives
$$
\int_{\mathbb R^n}U\Delta\varphi\,dX
=-\int_{\Omega_D}Q\varphi\,dX
+c\int_{\partial^*\Omega_D}\varphi\,d\mathcal H^{n-1},
$$
which is \eqref{eq:rotated-weak-law}.
\end{proof}

\section{Smirnov case in the plane}\label{sec:smirnov-case}

Since \eqref{mean value Serrin} is invariant under translation, we may assume that
$$
\int_\Gamma z\,ds = 0.
$$

If $f\in C(\overline\Omega)$ is holomorphic in $\Omega$, then its real and imaginary
parts are harmonic. Hence \eqref{mean value Serrin} implies
\begin{equation}\label{mean value holo}
c\int_\Gamma f\,ds = \int_\Omega f\,dA.
\end{equation}
In particular, for every analytic polynomial $\mathcal P$,
$$
c\int_\Gamma \mathcal P(z)\,ds = \int_\Omega \mathcal P(z)\,dA(z).
$$
Applying \eqref{green} to $f\equiv 0$ and $g(z)=\mathcal P(z)\overline z$, we also get
$$
\int_\Omega \mathcal P(z)\,dA(z)
=
\int_\Omega \partial_{\bar z}\bigl(\mathcal P(z)\overline z\bigr)\,dA(z)
=
\frac{1}{2i}\int_\Gamma \mathcal P(z)\,\overline z\,dz.
$$
Combining the two identities gives
\begin{equation}\label{orthogonal}
0
=
\int_\Gamma \mathcal P(z)\left(c\,ds-\frac{1}{2i}\,\overline z\,dz\right)
=
\int_\Gamma \mathcal P(z)\,g(z)\,ds,
\end{equation}
where
$$
g(z):=c-\frac{1}{2i}\,\overline z\,\tau(z),
\qquad
\tau(z):=\frac{dz}{ds}
$$
is the positively oriented unit tangent to $\Gamma$, defined for $ds$-a.e.
$z\in\Gamma$.

Since $dz=\tau\,ds$ and $|\tau|=1$, \eqref{orthogonal} is equivalently written as
\begin{equation}\label{eq:smirnov-cauchy-orthogonality}
\int_\Gamma \mathcal P(z)\,b(z)\,dz=0
\qquad\text{for every analytic polynomial } \mathcal P,
\end{equation}
where
$$
b(z):=\overline z-2ic\,\overline{\tau(z)}.
$$
This is exactly the boundary relation observed by Khavinson in the smooth/Smirnov
setting; see \cite[Theorem~1(ii)]{K1987}.

It is convenient to use Duren's notation
$$
E^1(\Omega)
:=
\Bigl\{F\in \mathcal O(\Omega): (F\circ\varphi)\varphi'\in H^1(\mathbb D)\Bigr\},
$$
which is independent of the particular Riemann map $\varphi$; see
\cite[Chapter~10]{D1970}. 

Now we are ready to prove Theorem~\ref{thm:smirnov-case}, which uses the non-tangential limit of Hardy functions in the unit disk $\mathbb D$; see also \cite{DZ2025}.

\begin{proof}[Proof of Theorem~\ref{thm:smirnov-case}]
Let $\Gamma:=\partial\Omega$, and let $\varphi:\mathbb D\to\Omega$ be the Riemann map
fixed above. By \eqref{eq:smirnov-cauchy-orthogonality} and
\cite[Theorem~10.4]{D1970}, the Cauchy integral of $b$ defines a holomorphic function
$F\in E^1(\Omega)$ whose nontangential boundary values satisfy
\begin{equation}\label{eq:smirnov-F-boundary}
F^*(z)=b(z)=\overline z-2ic\,\overline{\tau(z)}
\qquad\text{for }ds\text{-a.e. }z\in\Gamma.
\end{equation}
Equivalently,
\begin{equation}\label{eq:smirnov-F-E1}
(F\circ\varphi)\varphi' \in H^1(\mathbb D).
\end{equation}

Since $\Omega$ is Smirnov, $\varphi'$ is outer; see \cite[Chapter~10,
\S10.3]{D1970}. Hence \eqref{eq:smirnov-F-E1} implies that
$F\circ\varphi\in N^+$.
Here $N^+$ denotes the Smirnov class in $\mathbb D$; see \cite[Section~2.5]{D1970}.

On the other hand, \eqref{eq:smirnov-F-boundary} shows that
$$
(F\circ\varphi)^*(e^{it})
=
\overline{z(t)}-2ic\,\overline{\tau(t)}
\qquad\text{for a.e. }t\in[0,2\pi],
$$
so $(F\circ\varphi)^*\in L^\infty(\mathbb T)$. Therefore
$$
F\circ\varphi \in N^+\cap L^\infty(\mathbb T),
$$
and \cite[Theorem~2.11]{D1970} yields
\begin{equation}\label{eq:smirnov-F-bounded}
F\circ\varphi \in H^\infty(\mathbb D).
\end{equation}

Let $G$ be a primitive of $F$ in $\Omega$, so that $G'=F$. Then
$$
(G\circ\varphi)' = (F\circ\varphi)\varphi' \in H^1(\mathbb D).
$$
By \cite[Theorem~3.11]{D1970}, $G\circ\varphi$ extends continuously to
$\overline{\mathbb D}$ and its boundary values on $\mathbb T$ are absolutely continuous.

\noindent{\bf Step 1: A function $u$ satisfies Serrin's overdetermined system}. Define
$$
u(z):=\frac12\operatorname{Re}G(z)-\frac14|z|^2 + C_0,
$$
where $C_0\in\mathbb R$ will be chosen later. Since $G$ is holomorphic,
\begin{equation}\label{eq:smirnov-Delta-u}
\Delta u = -1 \qquad\text{in }\Omega.
\end{equation}
Also,
\begin{equation}\label{eq:smirnov-dz-u}
4\partial_z u = F-\overline z.
\end{equation}

We next show that the boundary trace of $u$ is constant. Set
$$
U(t):=u(z(t))=u(\varphi(e^{it})).
$$
Since $G\circ\varphi$ is absolutely continuous on $\mathbb T$ and $\partial\Omega$ is rectifiable, which ensures the absolute continuity of the mapping $t \mapsto \varphi(e^{it})$, it follows that the function $U$ is absolutely continuous on $[0,2\pi]$. For a.e. $t$,
$$
U'(t)= D u(z(t))\cdot z_t(t)=|z_t(t)|\,\partial_\tau u(z(t)).
$$
By \eqref{directional derivative},
$$
\partial_\tau u = 2\operatorname{Re}\bigl(\tau\,\partial_z u\bigr).
$$
Using \eqref{eq:smirnov-dz-u} and \eqref{eq:smirnov-F-boundary}, we obtain
$$
2\partial_z u = -ic\,\overline\tau
\qquad\text{for }ds\text{-a.e. on }\Gamma,
$$
hence
$$
\partial_\tau u
=2\operatorname{Re}\bigl(\tau\,\partial_z u\bigr)
=\operatorname{Re}\bigl(\tau(2\partial_z u)\bigr)
=\operatorname{Re}(-ic)=0
\qquad\text{for }ds\text{-a.e. on }\Gamma.
$$
Therefore $U'(t)=0$ for a.e. $t$. Since $U$ is absolutely continuous, it must be constant.
Thus by choosing $C_0$ suitably, we may assume that
\begin{equation}\label{eq:smirnov-u-zero-boundary}
u=0 \qquad\text{on }\Gamma.
\end{equation}

Let $\nu_{\rm in}=i\tau$ and $\nu_{\rm out}=-i\tau$ denote the inward and outward unit
normals to $\Gamma$. Again by \eqref{eq:smirnov-dz-u} and
\eqref{eq:smirnov-F-boundary},
$$
\partial_{\nu_{\rm in}}u
=2\operatorname{Re}\bigl(\nu_{\rm in}\partial_z u\bigr)
=\operatorname{Re}\bigl(i\tau(2\partial_z u)\bigr)
=\operatorname{Re}\bigl(i\tau(-ic\,\overline\tau)\bigr)
=c
$$
for $ds$-a.e. point of $\Gamma$. Thus
\begin{equation}\label{eq:smirnov-normal-u}
\partial_{\nu_{\rm in}}u=c
\qquad\text{for }ds\text{-a.e. on }\Gamma.
\end{equation}

\medskip

\noindent{\bf Step 2: A maximum principle for $|Du|$.} Now we are ready to follow the argument of Weinberger \cite{W1971}.
Let us pull everything back to the unit disk. Define
$$
v:=u\circ\varphi,\qquad
\widetilde F:=F\circ\varphi,\qquad
W:=\widetilde F-\overline\varphi.
$$
By \eqref{eq:smirnov-F-bounded}, $\widetilde F\in H^\infty(\mathbb D)$, so $W$ is bounded
and harmonic in $\mathbb D$. By Fatou's theorem for bounded analytic functions
\cite[Theorem~1.3]{D1970}, $\widetilde F$ has a.e. nontangential boundary values on
$\mathbb T$, and \eqref{eq:smirnov-F-boundary} yields
\begin{equation}\label{nontangential W}
  W(re^{it})\to W^\ast(e^{it}),
\quad  W^*(e^{it})=-2ic\,\overline{\tau(t)}
\qquad\text{for a.e. }t\in[0,2\pi].
\end{equation}
Hence
$$
|W^*|=2c
\qquad\text{a.e. on }\mathbb T.
$$

Since a bounded harmonic function is the Poisson integral of its boundary values,
$$
|W(w)|
=
\left|\frac{1}{2\pi}\int_0^{2\pi} P_w(e^{it})\,W^*(e^{it})\,dt\right|
\le
\frac{1}{2\pi}\int_0^{2\pi} P_w(e^{it})\,|W^*(e^{it})|\,dt
\le 2c
$$
for every $w\in\mathbb D$.
Furthermore, by the chain rule and \eqref{eq:smirnov-dz-u},
\begin{equation}\label{derivative of v}
4\partial_w v(w)
= \bigl(4\partial_z u\bigr)(\varphi(w))\,\varphi'(w)
= \bigl(F(\varphi(w))-\overline{\varphi(w)}\bigr)\varphi'(w)
= W(w)\varphi'(w).
\end{equation}
Therefore
\begin{equation}\label{eq:smirnov-grad-bound}
| D u(\varphi(w))|
= \frac{| D v(w)|}{|\varphi'(w)|}
= \frac{2|\partial_w v(w)|}{|\varphi'(w)|}
= \frac{|W(w)|}{2}
\le c
\qquad\text{for every }w\in\mathbb D.
\end{equation}
Moreover,   by \eqref{derivative of v} with $W\in L^\infty(\mathbb D)$, we have $Dv\in L^2(\mathbb D)$. 
Then as $v$ is continuous on $\overline{\mathbb D}$
and $v=0$ on $\mathbb T$, we have $v\in W^{1,2}_0(\mathbb D)$. By the conformal
invariance of the Dirichlet integral, $u\in W^{1,2}_0(\Omega)$.
In particular,
$$
u\in W^{1,\infty}(\Omega)\cap W^{1,2}_0(\Omega).
$$
By \eqref{eq:smirnov-Delta-u}, \eqref{eq:smirnov-u-zero-boundary}, and the maximum
principle,
\begin{equation}\label{eq:smirnov-u-positive}
u>0 \qquad\text{in }\Omega.
\end{equation}

We next show the boundary convergence for the radial derivative of
$v=u\circ\varphi$. Recall that
$$
4\partial_z u=F-\bar z, \quad
\widetilde F:=F\circ\varphi,
        \quad
W(w):=\widetilde F(w)-\overline{\varphi(w)},
$$
together with \eqref{nontangential W} and \eqref{derivative of v}.
Since $\varphi'\in H^1(\mathbb D)$, we have
$$
\varphi'(re^{it})\to (\varphi')^\ast(e^{it})
\quad\text{in }L^1(\mathbb T)
$$
as $r\to1$. Also, as $W$ is bounded in the sense $\sup_{0<r<1}\|W(re^{it})\|_{L^\infty(\mathbb T)}<\infty$,
\eqref{nontangential W} implies
$$
W(re^{it})\varphi'(re^{it})
\to 
W^\ast(e^{it})(\varphi')^\ast(e^{it})
\quad\text{in }L^1(\mathbb T);
$$
indeed, as $r\to 1$,
\begin{align*}
&\left\|
W(re^{it})\varphi'(re^{it})
-W^\ast(e^{it})(\varphi')^\ast(e^{it})
\right\|_{L^1(\mathbb T)}\\
\le & \ \|W(re^{it})\|_{L^\infty(\mathbb T)}
\left\|\varphi'(re^{it})-(\varphi')^\ast(e^{it})
\right\|_{L^1(\mathbb T)} \\
&\quad \quad+ \left\|\bigl(W(re^{it})-W^\ast(e^{it})\bigr)(\varphi')^\ast(e^{it})\right\|_{L^1(\mathbb T)}
\to 0.
\end{align*}
Now for real-valued $v$,
$$
\partial_r v(re^{it})
=
2\operatorname{Re}\left(e^{it}\partial_w v(re^{it})\right)
=
\frac12
\operatorname{Re}
\left(
e^{it}W(re^{it})\varphi'(re^{it})
\right).
$$
Hence
$$
\partial_r v(re^{it})
\to 
\frac12
\operatorname{Re}
\left(
e^{it}W^\ast(e^{it})(\varphi')^\ast(e^{it})
\right)
\quad\text{in }L^1(\mathbb T).
$$
Finally, since
$$
\tau(t)
=
\frac{z_t(t)}{|z_t(t)|}
=
\frac{i e^{it}(\varphi')^\ast(e^{it})}
     {|(\varphi')^\ast(e^{it})|},
$$
we have
$$
\overline{\tau(t)}
=
\frac{-i e^{-it}\overline{(\varphi')^\ast(e^{it})}}
     {|(\varphi')^\ast(e^{it})|}.
$$
Using \eqref{nontangential W} again, we obtain
$$
\begin{aligned}
\frac12
\operatorname{Re}
\left(
e^{it}W^\ast(e^{it})(\varphi')^\ast(e^{it})
\right)
&=
\frac12
\operatorname{Re}
\left(
e^{it}(-2ic\,\overline{\tau(t)})
(\varphi')^\ast(e^{it})
\right)
\\
&=
\frac12
\operatorname{Re}
\left(
-2c\,|(\varphi')^\ast(e^{it})|
\right)
\\
&=
-c\,|(\varphi')^\ast(e^{it})|.
\end{aligned}
$$
Consequently,
\begin{equation}\label{boundary convergence c}
\partial_r v(re^{it})
\to 
-c\,|(\varphi')^\ast(e^{it})|
\quad\text{in }L^1(\mathbb T)
\end{equation}
as $r\to1$.

\medskip

\noindent{\bf Step 3: The volume identity.}
We next pull back \eqref{mean value Serrin} to the unit disk.
If $H$ is harmonic in $\mathbb D$ and continuous on $\overline{\mathbb D}$, then
$h:=H\circ\varphi^{-1}$ is harmonic in $\Omega$ and continuous on $\overline\Omega$.
Applying \eqref{mean value Serrin} to $h$ and changing variables, we obtain
\begin{equation}\label{eq:smirnov-weighted-disk}
\int_{\mathbb D} H(w)\,|\varphi'(w)|^2\,dA(w)
=
c\int_0^{2\pi} H(e^{it})\,|(\varphi')^*(e^{it})|\,dt.
\end{equation}

Set
$$
I:=\int_\Omega u\,dA=\int_{\mathbb D} v(w)\,|\varphi'(w)|^2\,dA(w),
$$
$$
M:=\int_\Omega |z|^2\,dA(z)
=
\int_{\mathbb D} |\varphi(w)|^2\,|\varphi'(w)|^2\,dA(w),
$$
and
$$
B:=\int_\Gamma |z|^2\,ds
=
\int_0^{2\pi} |\varphi(e^{it})|^2\,|(\varphi')^*(e^{it})|\,dt.
$$

Let
$$
g(w):=|\varphi(w)|^2.
$$
Since $\varphi$ is holomorphic,
$$
\Delta g = 4|\varphi'|^2
\qquad\text{in }\mathbb D.
$$
Since $v=0$ on $\mathbb T$, $\varphi\in H^\infty$, one can pass $r\to 1$ in Green's formula \eqref{green} and get
\begin{equation}\label{DvDg}
    \int_{\mathbb D} Dv\cdot Dg\,dA=-4\int_{\mathbb D} v|\varphi'|^2\,dA =-4I.
\end{equation} 
Moreover, since $\partial_rv\to -c|(\varphi')^*|$ in $L^1(\mathbb T)$ according to \eqref{boundary convergence c},
$$\int_{\mathbb D} Dv\cdot Dg\,dA = \int_{\mathbb D} g|\varphi'|^2\, dA - c \int_0^{2\pi}|\varphi(e^{it})|^2|(\varphi')^*(e^{it})| \,dt =M-cB.$$
Thus,
\begin{equation}\label{eq:smirnov-first-identity}
4I+M=cB.
\end{equation}

Next define the bounded harmonic function
$$
J(w):=\operatorname{Re}\bigl(\varphi(w)\widetilde F(w)\bigr).
$$
Since $J$ need not extend continuously to $\overline{\mathbb D}$, we approximate it by
$$
\widetilde F_r(w):=\widetilde F(rw),
\qquad
J_r(w):=\operatorname{Re}\bigl(\varphi(w)\widetilde F_r(w)\bigr),
\qquad 0<r<1.
$$
Each $J_r$ is harmonic in $\mathbb D$ and continuous on $\overline{\mathbb D}$, so
\eqref{eq:smirnov-weighted-disk} applies to $J_r$. Since
$\widetilde F\in H^\infty(\mathbb D)$, the family $\{J_r\}_{0<r<1}$ is uniformly bounded,
$J_r\to J$ locally uniformly in $\mathbb D$, and
$J_r(e^{it})\to J^*(e^{it})$ for a.e. $t$. Hence dominated convergence yields
\begin{equation}\label{eq:smirnov-weighted-J}
\int_{\mathbb D} J(w)\,|\varphi'(w)|^2\,dA(w)
=
c\int_0^{2\pi} J^*(e^{it})\,|(\varphi')^*(e^{it})|\,dt.
\end{equation}

We first compute the boundary term in \eqref{eq:smirnov-weighted-J}. On $\mathbb T$,
$$
\widetilde F^*(e^{it})
=
\overline{\varphi(e^{it})}-2ic\,\overline{\tau(t)},
$$
hence
$$
J^*(e^{it})
=
|\varphi(e^{it})|^2+\operatorname{Re}\bigl(-2ic\,\varphi(e^{it})\overline{\tau(t)}\bigr).
$$
If we identify $z=x_1+ix_2$ with $x=(x_1,x_2)\in\mathbb R^2$, then
$x\cdot\nu_{\rm out}=\operatorname{Re}(z\overline\nu_{\rm out})$, and since $\nu_{\rm out}=-i\tau$,
$$
\operatorname{Re}\bigl(-2ic\,z\,\overline\tau\bigr)
=
-2c\,x\cdot\nu_{\rm out}.
$$
Therefore
$$
c\int_0^{2\pi} J^*(e^{it})\,|(\varphi')^*(e^{it})|\,dt
=
cB-2c^2\int_\Gamma x\cdot\nu_{\rm out}\,ds.
$$
By the divergence theorem,
$$
\int_\Gamma x\cdot\nu_{\rm out}\,ds
=
\int_\Omega \operatorname{div}x\,dA
=
2|\Omega|.
$$
Hence
\begin{equation}\label{eq:smirnov-J-boundary}
\int_{\mathbb D} J(w)\,|\varphi'(w)|^2\,dA(w)
=
cB-4c^2|\Omega|.
\end{equation}

We next compute   the interior term \eqref{eq:smirnov-weighted-J}. Since
$\widetilde F=\overline\varphi+W$,
$$
J|\varphi'|^2
=
|\varphi|^2|\varphi'|^2+\operatorname{Re}(\varphi W)\,|\varphi'|^2.
$$
Also,
$$
\partial_w g = \varphi'(w)\,\overline{\varphi(w)},
\qquad
\partial_{\overline w}g = \overline{\varphi'(w)}\,\varphi(w),
$$
and, since $W\varphi'=4\partial_w v$,
$$
 D v\cdot D g
=
4\operatorname{Re}\bigl(\partial_w v\,\partial_{\overline w}g\bigr)
=
4\operatorname{Re}\bigl(\partial_w v\,\overline{\varphi'}\,\varphi\bigr)
=
\operatorname{Re}(\varphi W)\,|\varphi'|^2.
$$
Thus
$$
\int_{\mathbb D} J|\varphi'|^2\,dA
=
M+\int_{\mathbb D} D v\cdot D g\,dA.
$$
Therefore, \eqref{DvDg} implies
\begin{equation}\label{eq:smirnov-J-interior}
\int_{\mathbb D} J(w)\,|\varphi'(w)|^2\,dA(w)=M-4I.
\end{equation}

Comparing \eqref{eq:smirnov-J-boundary} and \eqref{eq:smirnov-J-interior}, we obtain
\begin{equation}\label{eq:smirnov-second-identity}
M-4I=cB-4c^2|\Omega|.
\end{equation}
Subtracting \eqref{eq:smirnov-second-identity} from
\eqref{eq:smirnov-first-identity} gives
\begin{equation}\label{eq:smirnov-volume-identity}
2\int_\Omega u\,dA = c^2|\Omega|.
\end{equation}

\medskip
\noindent{\bf Step 4: Weinberger's $P$-function argument.}
Due to \eqref{eq:smirnov-volume-identity} and \eqref{eq:smirnov-grad-bound}, we conclude the theorem by the argument of Weinberger's $P$-function argument \cite{W1971}; the details are omitted. 
\end{proof}

\section{Non-Smirnov  case in the plane}\label{sec:non-smirnov-case}

We now turn to the non-Smirnov side and use the Duren--Shapiro--Shields (DSS) construction to show that the strong quadrature identity \eqref{mean value Serrin} behaves very differently from the one-pole pseudosphere condition of Lewis--Vogel.

The  proposition below rules out the most naive DSS-class, namely a fixed purely singular branch, which was considered by \cite{KL1937}. This means that one cannot expect simple non-Smirnov domains satisfying \eqref{mean value Serrin}.  

\begin{prop} \label{prop:pure-singular-no-go}
Let $\mu$ be a nonzero positive singular measure of class $A^*$ on $\mathbb T$, and define
$$
F_\mu(z):=\mathcal H[\mu](z),
\qquad
f_a(z):=\int_0^z e^{-aF_\mu(\xi)}\,d\xi,
\qquad a>0.
$$
Suppose that there exists a sequence $a_j\to  0$ such that each $f_{a_j}$ is conformal in $\mathbb D$, extends homeomorphically to $\overline{\mathbb D}$, and maps $\mathbb D$ onto a rectifiable Jordan domain $\Omega_{a_j}$. Assume  that for every $j$ there exists $c_j>0$ such that
\begin{equation}\label{eq:pure-singular-quadrature}
\int_{\Omega_{a_j}} h\,dA
=
c_j\int_{\partial\Omega_{a_j}} h\,ds
\qquad\text{for every }h\in C(\overline{\Omega_{a_j}}),\ \Delta h=0.
\end{equation}
Then $\mu=0$. In particular, no nontrivial fixed pure-singular DSS branch can produce a perturbative family of domains satisfying \eqref{mean value Serrin}.
\end{prop}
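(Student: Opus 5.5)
The plan is to pull the quadrature identity \eqref{eq:pure-singular-quadrature} back to the disk through $f_{a_j}$, and then differentiate in $a$ at $a=0$. Differentiating will show that all nonzero Fourier coefficients of $\mu$ vanish; a singular measure with this property must be zero.

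\textbf{Step 1: pull back to the disk.} Fix $j$ and write $a=a_j$, $f=f_a$. Since $f$ is a homeomorphism of $\overline{\mathbb D}$ onto $\overline{\Omega_a}$, for every harmonic polynomial $H$ the function $h=H\circ f^{-1}$ is harmonic in $\Omega_a$ and continuous on $\overline{\Omega_a}$. So \eqref{eq:pure-singular-quadrature} applies to $h$. Changing variables as in \eqref{eq:smirnov-weighted-disk}, using $ds=|(f')^*|\,dt$ (valid because $\partial\Omega_a$ is rectifiable), gives
$$
\int_{\mathbb D}H\,|f'|^2\,dA=c_j\int_0^{2\pi}H(e^{it})\,|(f')^*(e^{it})|\,dt .
$$

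\textbf{Step 2: identify the two densities.} We have $|f'|^2=|e^{-aF_\mu}|^2=e^{-2aP[\mu]}$. The measure $a\mu$ has zero absolutely continuous part, so by \eqref{eq:boundary-modulus-Hnu} with $w=0$ we get $|(f')^*|=1$ for $m$-a.e. $\zeta\in\mathbb T$. In the normalized measures the identity therefore reads
$$
\int_{\mathbb D}H\,e^{-2aP[\mu]}\,da=2c_j\int_{\mathbb T}H\,dm .
$$

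\textbf{Step 3: test with $z^k$.} Take $H=\operatorname{Re}z^k$ and $H=\operatorname{Im}z^k$ with $k\ge1$. The right-hand side vanishes, so
$$
\int_{\mathbb D}z^k\,e^{-2a_jP[\mu]}\,da=0 .
$$
Also $\int_{\mathbb D}z^k\,da=0$, so we may subtract $1$ from the weight and divide by $a_j$. This yields
$$
\int_{\mathbb D}z^k\,\frac{e^{-2a_jP[\mu]}-1}{a_j}\,da=0\qquad\text{for all } j .
$$

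\textbf{Step 4: pass to the limit $a_j\to0$.} This is the one step needing care, and it is handled by dominated convergence. Since $P[\mu]\ge0$ and $|e^{-x}-1|\le x$ for $x\ge0$, the integrand is bounded by $2P[\mu]$. This majorant is integrable, because Fubini gives $\int_{\mathbb D}P[\mu]\,da=\mu(\mathbb T)<\infty$. The integrand converges pointwise to $-2z^kP[\mu]$. Hence
$$
\int_{\mathbb D}z^k\,P[\mu]\,da=0\qquad\text{for all } k\ge1 .
$$

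\textbf{Step 5: compute the Fourier coefficients.} Expand $P[\mu](re^{i\theta})=\sum_n\hat\mu(n)\,r^{|n|}e^{in\theta}$, which converges locally uniformly in $\mathbb D$. Then
$$
\int_{\mathbb D}z^kP[\mu]\,da=\hat\mu(-k)\int_0^1 r^{2k}\,2r\,dr=\frac{\hat\mu(-k)}{k+1}.
$$
To justify integrating term by term, first integrate over $|z|<\rho$ and then let $\rho\to1$, using monotone/dominated convergence. It follows that $\hat\mu(-k)=0$ for all $k\ge1$. Since $\mu$ is real, also $\hat\mu(k)=\overline{\hat\mu(-k)}=0$.

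\textbf{Step 6: conclude.} All nonzero Fourier coefficients of $\mu$ vanish, so $\mu=\hat\mu(0)\,dm$ is absolutely continuous. A measure that is both singular and absolutely continuous is zero, so $\mu=0$.

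The main obstacle I expect is the justification in Steps 1 and 2. Step 1 needs the change of variables on the boundary, $ds=|(f')^*|\,dt$, for a merely rectifiable Jordan curve; this is the standard fact recalled in Section~\ref{sec:preliminaries}. Step 2 needs $|(f')^*|=1$ a.e., which is exactly \eqref{eq:boundary-modulus-Hnu}. Beyond these, the argument only requires the domination in Step 4, which holds without any $A^*$ hypothesis because $P[\mu]\ge0$ and lies in $L^1(\mathbb D)$.
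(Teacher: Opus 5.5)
Your proposal is correct and follows essentially the same route as the paper. You pull back the quadrature identity via $f_{a_j}$ using $|(f_{a_j}')^*|=1$ a.e., test against $z^k$, differentiate at $a=0$ by dominated convergence, and conclude that the singular measure $\mu$ has vanishing nonzero Fourier coefficients. The only cosmetic differences are these: you pass to the limit in the difference quotient along $a_j$ directly, with the explicit majorant $2P[\mu]$, and you compute $\int_{\mathbb D} z^kP[\mu]\,da=\hat\mu(-k)/(k+1)$ by Fourier expansion rather than through Fubini and the identity $\int_{\mathbb D} z^kP_z(\zeta)\,da(z)=\zeta^k/(k+1)$.
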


\begin{proof}
For each $j$ set $f_j:=f_{a_j}$ and $\Omega_j:=\Omega_{a_j}$. Since $a_j\mu$ is purely singular, \eqref{eq:boundary-modulus-Hnu} gives
\begin{equation}\label{eq:pure-singular-boundary-modulus}
|f_j'^*(\zeta)|=1
\qquad\text{for }m\text{-a.e. }\zeta\in\mathbb T.
\end{equation}
Let $H\in C(\overline{\mathbb D})$ be harmonic in $\mathbb D$ and set $h:=H\circ f_j^{-1}$. Then $h\in C(\overline{\Omega_j})$ is harmonic in $\Omega_j$. Changing variables in \eqref{eq:pure-singular-quadrature} and using \eqref{eq:pure-singular-boundary-modulus}, we obtain
\begin{equation}\label{eq:pure-singular-pullback}
\int_{\mathbb D} H(z)e^{-2a_jP[\mu](z)}\,da(z)
=
2{c_j} \int_{\mathbb T} H(\zeta)\,dm(\zeta)
\end{equation}
for every harmonic $H$ on $\mathbb D$.

Fix $n\ge 1$ and take $H(z)=z^n$. Since $\int_{\mathbb T}\zeta^n\,dm(\zeta)=0$, \eqref{eq:pure-singular-pullback} yields
$$
M_n(a_j):=\int_{\mathbb D} z^n e^{-2a_jP[\mu](z)}\,da(z)=0
\qquad\text{for every }j.
$$
As $P[\mu]\in L^1(\mathbb D,da)$ and
$$
\frac{1-e^{-2aP[\mu](z)}}{a}\to 2P[\mu](z)
\qquad\text{for a.e. }z\in\mathbb D,
$$
dominated convergence gives
$$
M_n'(0)=-2\int_{\mathbb D} z^n P[\mu](z)\,da(z).
$$
Since $M_n(a_j)=0$ along a sequence converging to $0$ and
$$M_n(0)=\int_{\mathbb D}z^n\,da(z)=0,$$
we get $M_n'(0)=0$ and therefore
\begin{equation}\label{eq:pure-singular-moment-zero}
\int_{\mathbb D} z^n P[\mu](z)\,da(z)=0
\qquad\text{for every }n\ge 1.
\end{equation}
Using \eqref{eq:Herglotz-real-part}, Fubini's theorem, and the identity
$$
\int_{\mathbb D} z^n P_z(\zeta)\,da(z)=\frac{\zeta^n}{n+1}
\qquad\text{for }\zeta\in\mathbb T,
$$
we obtain from \eqref{eq:pure-singular-moment-zero}
$$
0
=
\int_{\mathbb T}\left(\int_{\mathbb D} z^n P_z(\zeta)\,da(z)\right)d\mu(\zeta)
=
\frac{1}{n+1}\int_{\mathbb T}\zeta^n\,d\mu(\zeta)
\qquad (n\ge 1).
$$
Thus all positive Fourier coefficients of $\mu$ vanish. Since $\mu$ is a real finite measure, all negative Fourier coefficients also vanish. Therefore $\mu$ is a constant multiple of Lebesgue measure. Since $\mu$ is singular, this constant must be zero. Hence $\mu=0$, a contradiction.
\end{proof}

\begin{rem}\label{difference}
Proposition~\ref{prop:pure-singular-no-go} indicates that,  the equation  \eqref{pseudoballs}  for pseudoballs is very difference from  the strong quadrature identity \eqref{mean value Serrin}. Indeed, pseudoballs are compatible with a pure singular branch, while the strong quadrature identity is not. 

Moreover, observe that disk has a $2$-dimensional family of  M\"obius-center changes, which means the linearized quadrature equation around the disk has an extra finite-dimensional kernel generated by the infinitesimal symmetry group of the disk. This also makes a big difference from the pseudoball case. Then, instead of performing a Lyapunov--Schmidt reduction to quotient out those symmetry directions, we impose a symmetry class below so that those directions are forbidden. This gives us the desired counterexample below.
\end{rem}

\begin{proof}[Proof of Theorem~\ref{thm:counterexample-branch}]
Let $f\in C^2((-\rho,\rho))$ and set $f_0:=f(0)>0$.  Choose a nonzero positive singular measure $\sigma$ of class $A^*$ on $\mathbb T$.  Let
$$
    \mu:=\frac14\sum_{k=0}^3 (R_{\pi k/2})_\#\sigma,
    \qquad R_{\pi k/2}(\zeta)=i^k\zeta.
$$
Then $\mu$ is nonzero, positive, singular, of class $A^*$, and four-fold symmetric.  In particular,
\begin{equation}\label{eq:mu-4fold}
    \widehat\mu(n)=0
    \qquad\hbox{whenever }4\nmid n.
\end{equation}
Set $M_\mu:=P[\mu]$.

Fix $0<\alpha<1$ and use the Banach space $X_\alpha^4$ defined in \eqref{eq:Xalpha4}.  For $W\in X_\alpha^4$ and $a\ge0$, define
\begin{equation}\label{eq:GWa-def}
    G_{W,a}(z):=\exp\{-2(P[W](z)+aM_\mu(z))\},
    \qquad z\in\mathbb D.
\end{equation}
For $\lambda>0$ small, let $v_{W,a,\lambda}$ be the unique solution of
\eqref{eq:semilinear-disk-dirichlet}, and define the semilinear density
\begin{equation}\label{eq:semilinear-density-def}
    G^{f,\lambda}_{W,a}(z)
    :=G_{W,a}(z)f(\lambda^2v_{W,a,\lambda}(z)).
\end{equation}
We define the nonlinear operator
\begin{equation}\label{eq:Psi-def}
    \Psi_{f,\lambda}(W,a)
    :=
    \log \ct(G^{f,\lambda}_{W,a})+W
    -\int_{\mathbb T}\bigl(\log \ct(G^{f,\lambda}_{W,a})+W\bigr)\,dm.
\end{equation}
The logarithm is well defined for $W$, $a$, and $\lambda$ small.  Indeed, since $M_\mu\ge0$ and $W$ is bounded, $G_{W,a}$ is bounded from above; on $|z|\le1/2$ it is bounded from below by a positive constant depending only on $\|W\|_{X^4_\az}$ and $a$.  Since $f(\lambda^2v_{W,a,\lambda})=f_0+O(\lambda^2)$ by \eqref{eq:semilinear-source-small}, the same lower bound holds for $G^{f,\lambda}_{W,a}$ when $\lambda$ is small.  Hence $\ct(G^{f,\lambda}_{W,a})$ is bounded below on $\mathbb T$.

\medskip
\noindent{\bf Step 1: Regularity of the nonlinear operator.}
We first compare \eqref{eq:Psi-def} with the constant-source operator
$$
    \Psi(W,a):=\log \ct(G_{W,a})+W
    -\int_{\mathbb T}\bigl(\log \ct(G_{W,a})+W\bigr)\,dm.
$$
Since $f(\lambda^2v_{W,a,\lambda})=f_0+O(\lambda^2)$ uniformly in $L^\infty(\mathbb D)$, Lemma~\ref{lem:T-holder} gives
$$
    \ct(G^{f,\lambda}_{W,a})
    =f_0\ct(G_{W,a})+O(\lambda^2)
    \qquad\hbox{in }C^\alpha(\mathbb T).
$$
The positive constant $f_0$ disappears after subtracting the boundary mean of the logarithm.  Therefore
\begin{equation}\label{eq:Psi-semi-perturb}
    \Psi_{f,\lambda}(W,a)=\Psi(W,a)+O(\lambda^2)
    \qquad\hbox{in }X_\alpha^4,
\end{equation}
uniformly for $W$ and $a$ in a sufficiently small set.  Moreover, by
\eqref{eq:semilinear-source-derivative-small},
\begin{equation}\label{eq:DPsi-semi-perturb}
    D_W\Psi_{f,\lambda}(W,a)=D_W\Psi(W,a)+O(\lambda^2)
    \qquad\hbox{in }\mathcal L(X_\alpha^4,X_\alpha^4).
\end{equation}

\medskip
\noindent{\bf Step 2: The linearized operator.}
We now recall the linearized operator at the disk.  At $(W,a)=(0,0)$,
$$
    D_W\Psi(0,0)=I-2K,
    \qquad K:=\ct\circ P.
$$
For the Fourier mode ${\bf e}_k(\zeta)=\zeta^k$ one has
\begin{align*}
    K({\bf e}_k)(e^{i\varphi})=& \ \int_0^1\int_{\mathbb T} \left(\sum_{\ell\in \mathbb Z} r^{|\ell|} e^{i\ell(\theta-\varphi)} \right)r^{|k|}e^{i k \theta} \,dm(\theta) 2r\,dr\\
    = &\ 2e^{i k \varphi}\int_0^1 r^{|k|+1}\, dr     =\frac{1}{|k|+1} e^{ik\varphi}   =\frac{1}{|k|+1}{\bf e}_k(e^{i\varphi}).
\end{align*}
and hence
$$
    K({\bf e}_k)=\frac1{|k|+1}{\bf e}_k.
$$
Since functions in $X_\alpha^4$ have only Fourier modes $n\in4\mathbb Z\setminus\{0\}$, all eigenvalues of $I-2K$ on $X_\alpha^4$ are
$$
    \frac{|k|-1}{|k|+1}>0.
$$
As in the proof of the constant-source construction, $K$ is compact on $X_\alpha^4$  since
$$
K: X_\alpha^4 \hookrightarrow L^\infty(\mathbb T)
\xrightarrow{\ P\ }
L^\infty(\mathbb D)
\xrightarrow{\ \ct
 }
C^\beta(\mathbb T)
\hookrightarrow X_\alpha^4
$$
for every $\beta\in(\alpha,1)$, where the last embedding is compact. Hence $I-2K$ is invertible on $X_\alpha^4$ by the Fredholm alternative.
Thus
\begin{equation}\label{eq:linearized-invertible}
    A:=I-2K:X_\alpha^4\to X_\alpha^4
\end{equation}
is an isomorphism.

\medskip
\noindent{\bf Step 3: Contraction.}
Now we run Banach fixed point theorem to find a solution $(W(a), a)$ to 
$$\Psi_{f,\,\lambda}(W,\,a)=0.$$

 Choose $\epsilon_0>0$ and then choose $a_0>0$ and $\lambda_0>0$ so small that
\begin{equation}\label{eq:contraction-derivative-bound}
    \|A^{-1}(D_W\Psi_{f,\lambda}(W,a)-A)\|_{\mathcal L(X_\alpha^4)}
    \le \frac12
\end{equation}
whenever $\|W\|_{X_\alpha^4}\le\epsilon_0$, $0\le a\le a_0$, and $0<\lambda<\lambda_0$.  This follows from the continuity of $D_W\Psi(W,a)$ near $(0,0)$ and from \eqref{eq:DPsi-semi-perturb}.  Shrinking $a_0$ and $\lambda_0$  further if necessary, we also have
\begin{equation}\label{eq:contraction-center-bound}
    \|A^{-1}\Psi_{f,\lambda}(0,a)\|_{X_\alpha^4}
    \le \frac{\epsilon_0}{2}.
\end{equation}
For fixed $a$ and $\lambda$ define
$$
    \Gamma_{a,\lambda}(W):=W-A^{-1}\Psi_{f,\lambda}(W,a).
$$
By the fundamental theorem of calculus in the Banach space $X_\alpha^4$, equations \eqref{eq:contraction-derivative-bound} and \eqref{eq:contraction-center-bound} imply that $\Gamma_{a,\lambda}$ maps the closed ball
$\overline B_{\epsilon_0}\subset X_\alpha^4$ into itself and is a contraction with Lipschitz constant at most $1/2$.  Hence there is a unique
$W(a,\lambda)\in\overline B_{\epsilon_0}$ such that
\begin{equation}\label{eq:Psi-zero}
    \Psi_{f,\lambda}(W(a,\lambda),a)=0.
\end{equation}
Since $X_\alpha^4$ consists of mean-zero four-fold symmetric functions, \eqref{eq:Psi-zero} is equivalent to the existence of a positive constant $C(a,\lambda)$ such that
\begin{equation}\label{eq:fixed-point-equation}
    \ct(G^{f,\lambda}_{W(a,\lambda),a})
    =C(a,\lambda)e^{-W(a,\lambda)}
    \qquad\hbox{on }\mathbb T.
\end{equation}

\medskip
\noindent{\bf Step 4: Build the DSS datum.}
 Define the real Borel measure
$$
    d\nu_{a,\lambda}:=W(a,\lambda)\,dm+a\,d\mu.
$$
The absolutely continuous part $W(a,\lambda)dm$ is of class $A^*$, and the singular part is $a\mu$.  For $a$ and $\lambda$ sufficiently small, Lemma~\ref{lem:small-data-DSS} applies.  Let
$$
    F_{a,\lambda}:=\mathcal H[\nu_{a,\lambda}],
    \qquad
    f_{a,\lambda}(z):=\int_0^z e^{-F_{a,\lambda}(\xi)}\,d\xi,
    \qquad
    \Omega_{a,\lambda}^{\rm ref}:=f_{a,\lambda}(\mathbb D).
$$
Then $f_{a,\lambda}$ is conformal in $\mathbb D$, extends homeomorphically to $\overline{\mathbb D}$, and maps $\mathbb D$ onto a rectifiable Jordan domain.  Since the singular part is $a\mu\ne0$ for $a>0$, the image domain is not Smirnov.

By \eqref{eq:Herglotz-real-part} and \eqref{eq:boundary-modulus-Hnu},
\begin{equation}\label{interior Jacobian density}
    |f_{a,\lambda}'(z)|^2
    =G_{W(a,\lambda),a}(z),
    \qquad z\in\mathbb D,
\end{equation}
and
$$
    |f_{a,\lambda}'{}^*(\zeta)|=e^{-W(a,\lambda)(\zeta)}
    \qquad\hbox{for }m\hbox{-a.e. }\zeta\in\mathbb T.
$$
Thus \eqref{eq:fixed-point-equation} becomes
\begin{equation}\label{eq:fixed-point-equation-fa}
    \ct\left(|f_{a,\lambda}'|^2
    f(\lambda^2v_{W(a,\lambda),a,\lambda})\right)
    =C(a,\lambda)|f_{a,\lambda}'{}^*|
    \qquad\hbox{on }\mathbb T.
\end{equation}

\medskip
\noindent{\bf Step 5: Conclusion.}
Let $\Omega_{a,\lambda}:=\lambda\Omega_{a,\lambda}^{\rm ref}$ be the physical domain.  Define $u_{a,\lambda}$ on $\Omega_{a,\lambda}$ by
\begin{equation}\label{eq:physical-scaling}
    u_{a,\lambda}(\lambda f_{a,\lambda}(z))
    :=\lambda^2 v_{W(a,\lambda),a,\lambda}(z).
\end{equation}
Then $u_{a,\lambda}=0$ on $\partial\Omega_{a,\lambda}$ and, by the conformal invariance of the Laplacian in two dimensions,
$$
    -\Delta u_{a,\lambda}=f(u_{a,\lambda})
    \qquad\hbox{in }\Omega_{a,\lambda}.
$$

We next prove the harmonic moment identity.  Let $H\in C(\overline{\mathbb D})$ be harmonic in $\mathbb D$, and let $h:=H\circ f_{a,\lambda}^{-1}$ on $\Omega_{a,\lambda}^{\rm ref}$.  By \eqref{eq:T-fubini} and \eqref{eq:fixed-point-equation-fa},
$$
\begin{aligned}
    &\int_{\mathbb D}H(z)|f_{a,\lambda}'(z)|^2
    f(\lambda^2v_{W(a,\lambda),a,\lambda}(z))\,da(z) \\
    &\qquad =
    C(a,\lambda)\int_{\mathbb T}H(\zeta)|f_{a,\lambda}'{}^*(\zeta)|\,dm(\zeta).
\end{aligned}
$$
Changing variables, using $da=dA/\pi$ and $ds=2\pi|f_{a,\lambda}'{}^*|dm$, gives
$$
    \int_{\Omega_{a,\lambda}^{\rm ref}}f(\lambda^2v_{W(a,\lambda),a,\lambda})h\,dA
    =\frac{C(a,\lambda)}2\int_{\partial\Omega_{a,\lambda}^{\rm ref}}h\,ds.
$$
After scaling by $\lambda$, this becomes
\begin{equation}\label{eq:semilinear-domain-moment}
    \int_{\Omega_{a,\lambda}} f(u_{a,\lambda})\,\psi\,dA
    =c(a,\lambda)\int_{\partial\Omega_{a,\lambda}}\psi\,ds
\end{equation}
for every harmonic $\psi\in C(\overline{\Omega_{a,\lambda}})$, where
$$
    c(a,\lambda):=\lambda\frac{C(a,\lambda)}2.
$$
Indeed, if $\psi$ is harmonic on the physical domain, then $\psi(\lambda\cdot)$ is harmonic on the reference domain, volume measure scales by $\lambda^2$, and arclength by $\lambda$.

Finally, Lemma~\ref{lem:quadrature-to-weak-serrin} with $q=f(u_{a,\lambda})$ and the moment identity \eqref{eq:semilinear-domain-moment} gives
$$
    \Delta u_{a,\lambda}
    =c(a,\lambda)\mathcal H^1\!\lfloor_{\partial\Omega_{a,\lambda}}
    -f(u_{a,\lambda})\mathbf 1_{\Omega_{a,\lambda}}\,dA
    \qquad\hbox{in }\mathcal D'(\mathbb C).
$$
The zero extension of $u_{a,\lambda}$ is compactly supported and vanishes on
$\mathbb C\setminus\overline{\Omega_{a,\lambda}}$.
The domain is not Smirnov since the singular part of $\nu_{a,\lambda}$ is $a\mu\ne0$.  It is not a disk, since disks are Smirnov domains.  The four-fold symmetry of $\mu$ and $W(a,\lambda)$ gives the desired rotational symmetry.  The proof is complete.
\end{proof}

\section{Higher-dimensional semilinear counterexamples}\label{sec:higher-dimensional-case}

In this section, we establish Theorem~\ref{thm:higher-dimensional-counterexample}. The overall strategy is analogous to that of Theorem~\ref{thm:counterexample-branch}, but the details are significantly more delicate. The proof relies on three additional components: a suitable symmetry assumption on the singular data, a weighted elliptic trace estimate on the disk, and a quantitative control of the singular perturbation. Taken together, these ensure that the first Fourier mode generated by the weight can be eliminated while preserving the positivity of the singular measure, similar to the four-fold symmetry used in the planar setting.

Throughout this section, set $n\ge3$ and
$ p:=n-2.$
Let
$$
A(\zeta):=-\zeta, \qquad S(\zeta):=-\overline\zeta,
    \qquad \zeta\in\mathbb T.
$$
Here $A$ is the antipodal map and $S$ is the reflection across the imaginary axis.  

Let $0<\eta<1/10$ be a small parameter, and set
$$
I_+:=\{e^{it}: |t-\pi/2|<\eta\},
\qquad I_-:=A(I_+).
$$
Choose a nonzero positive singular probability measure $\sigma_+$ of class $A^*$, supported in $I_+$, such that
$$
    S_\#\sigma_+=\sigma_+.
$$
Such a measure can be obtained by taking a positive singular $A^*$ measure on a small interval and symmetrizing it via $S$.  Define
\begin{equation}\label{eq:defn-nu-sigma}
 \sigma_-:=A_\#\sigma_+,
\qquad  \Sigma:=\sigma_++\sigma_-,
 \qquad \nu:=\sigma_- -\sigma_+.    
\end{equation}
Then $\Sigma$ is a finite positive singular $A^\ast$-measure with $\Sigma(\mathbb T)=2,$
and $\nu$ is a finite signed $A^\ast$-measure satisfying
$$
|\nu|\le \Sigma,\qquad |P[\nu]|\le P[\Sigma].
$$

Moreover,
$$
A_\#\Sigma=\Sigma,
\qquad S_\#\Sigma=\Sigma,
\qquad A_\#\nu=-\nu,
\qquad S_\#\nu=\nu.
$$
The exact antipodal symmetry of $\Sigma$ will be used to remove the  sine mode.
For $a>0$ and $|\delta|<a$, set
\begin{equation}\label{eq:hd-singular-measure}
    \mu_{a,\delta}:=a\Sigma+\delta\nu
    =(a-\delta)\sigma_+ +(a+\delta)\sigma_- .
\end{equation}
Then $\mu_{a,\delta}$ is a nonnegative singular $A^*$ measure whenever $|\delta|<a$.

Fix a small H\"older exponent
\begin{equation}\label{eq:alpha-choice-hd}
    0<\alpha<\frac14.
\end{equation}
Let $\mathcal X_\alpha$ be the real Banach space of all $W\in C^\alpha(\mathbb T)$ such that
$$
\int_{\mathbb T}W\,dm=0,
 \qquad W(-\overline\zeta)=W(\zeta).
$$
Note that, the only first Fourier mode compatible with this reflection symmetry is $\sin t$. We denote by
$$
 \pi_{\sin}g
:= 2\int_{\mathbb T} g(e^{it})\sin t\,dm(e^{it})
$$
the scalar coefficient of the $\sin t$-mode, and set
$$
\mathcal P_{\sin}g:=(\pi_{\sin}g)\sin t,\qquad
    \mathcal P_{\rm hi}g:=\mathcal P_0g-\mathcal P_{\sin}g,
$$
where $\mathcal P_0g=g-\int_{\mathbb T}g\,dm$ is the mean-zero projection.
We eventually define
$$
 \mathcal Y_\alpha:= \{W\in\mathcal X_\alpha:\pi_{\sin}W=0\}.
$$
Thus $\mathcal P_{\rm hi}$ projects onto the high-mode component in the reflected
mean-zero class, while $\pi_{\sin}$ is a scalar functional.

For $W\in\mathcal Y_\alpha$, recall the Herglotz transform $\mathcal H[\cdot]$ and define $F_{W,a,\delta}$ so that
\begin{equation}\label{eq:hd-F-prime-corrected}
    F'_{W,a,\delta}(z)
    :=\exp\{-\mathcal H[W\,dm+\mu_{a,\delta}](z)\},
    \qquad
    F_{W,a,\delta}(0)=0.
\end{equation}
By Lemma~\ref{lem:small-data-DSS}, if $a$, $|\delta|$, and $\|W\|_{C^\alpha}$ are sufficiently small, then $F_{W,a,\delta}$ is conformal in $\mathbb D$, extends homeomorphically to $\overline{\mathbb D}$, and has rectifiable Jordan image.  Since $\mu_{a,\delta}$ is nonzero for the parameters used below, that image is not a Smirnov domain.  Moreover,
\begin{equation}\label{eq:Fprime-upper-bound}
    |F'_{W,a,\delta}(z)|
    =\exp\{-P[W](z)-P[\mu_{a,\delta}](z)\}
    \le e^{\|W\|_{L^\infty(\mathbb T)}}.
\end{equation}
Thus these conformal maps are uniformly Lipschitz on $\mathbb D$ as long as $W$ stays in a fixed small ball.

\subsection{Weighted elliptic trace operators}

We first record the following stability result, which will be used  in a small neighborhood of the origin in $C^\alpha(\mathbb T)$.

\begin{lem}\label{lem:hd-herglotz-stability}
Let $0<\alpha<1$ and let $M>0$.  Let $\mu$ be a finite nonnegative Borel
measure on $\mathbb T$, and define
$$
    F'_{W,\mu}(z):=
    \exp\{-\mathcal H[W\,dm+\mu](z)\},
    \qquad F_{W,\mu}(0)=0,
$$
where $W\in C^\alpha(\mathbb T)$ is real-valued and
$\|W\|_{C^\alpha(\mathbb T)}\le M$.  Then the map
$W\mapsto F_{W,\mu}$ is Frechet differentiable as a map into
$W^{1,\infty}(\mathbb D)$, and for every $H\in C^\alpha(\mathbb T)$,
$$
    D_WF'_{W,\mu}[H]
    =-\mathcal H[H\,dm]F'_{W,\mu}.
$$
Moreover
\begin{equation}\label{eq:Herglotz-W-stability}
    \|D_WF'_{W,\mu}[H]\|_{L^\infty(\mathbb D)}
    +\|D_WF_{W,\mu}[H]\|_{W^{1,\infty}(\mathbb D)}
    \le C_{\alpha,M}\|H\|_{C^\alpha(\mathbb T)} .
\end{equation}
The constant is independent of the nonnegative measure $\mu$.
\end{lem}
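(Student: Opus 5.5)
The plan is to factor the derivative as a bounded multiplier times a fixed analytic factor, and to differentiate only the factor that depends on $W$. We write
$$
F'_{W,\mu}=E_W\,S_\mu,\qquad E_W:=\exp\{-\mathcal H[W\,dm]\},\qquad S_\mu:=\exp\{-\mathcal H[\mu]\}.
$$
Since $\mu\ge0$, we have $|S_\mu|=e^{-P[\mu]}\le1$ in $\mathbb D$. So $S_\mu$ is a fixed multiplier of sup-norm at most one, and $\mu$ enters the estimates only through this bound. This is exactly why the constant will not depend on $\mu$.

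The key input is the classical Privalov theorem. For $0<\alpha<1$, the Herglotz transform $H\mapsto\mathcal H[H\,dm]$ maps $C^\alpha(\mathbb T)$ boundedly into the disk algebra, with Hölder boundary values. In particular
$$
\|\mathcal H[H\,dm]\|_{L^\infty(\mathbb D)}\le C_\alpha\|H\|_{C^\alpha(\mathbb T)},
$$
because its real part is $P[H]$ and its imaginary part is the Poisson extension of the conjugate function $\widetilde H\in C^\alpha$. Consequently the map $W\mapsto\mathcal H[W\,dm]$ is a bounded linear map $C^\alpha(\mathbb T)\to L^\infty(\mathbb D)$, and hence it is Fréchet differentiable.

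Composition with $\exp$ is smooth as a map $L^\infty(\mathbb D)\to L^\infty(\mathbb D)$. Indeed, the Taylor remainder satisfies
$$
|e^{-g-k}-e^{-g}(1-k)|\le \tfrac12|k|^2e^{|g|+|k|}.
$$
Therefore $W\mapsto E_W$ is Fréchet differentiable into $L^\infty(\mathbb D)$, with derivative $-\mathcal H[H\,dm]E_W$. Multiplying by the fixed bounded function $S_\mu$ preserves this, and gives
$$
D_WF'_{W,\mu}[H]=-\mathcal H[H\,dm]\,F'_{W,\mu}.
$$
For the bound we use $|F'_{W,\mu}|\le e^{\|W\|_\infty}\le e^M$. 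This yields $\|D_WF'[H]\|_{L^\infty(\mathbb D)}\le C_\alpha e^M\|H\|_{C^\alpha}$. The remainder estimate is uniform in $\mu$ for the same reason, namely $|S_\mu|\le1$.

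To pass from $F'$ to $F$, we use the linear map $g\mapsto\int_0^z g(\xi)\,d\xi$, integrating along the radial segment. It maps holomorphic $L^\infty(\mathbb D)$ functions into $W^{1,\infty}(\mathbb D)$ with norm at most two: the sup-norm is at most $\|g\|_\infty$, since $|z|<1$, and the derivative is $g$ itself. Composing this bounded linear map with the differentiable map $W\mapsto F'_{W,\mu}$ gives Fréchet differentiability of $W\mapsto F_{W,\mu}$ into $W^{1,\infty}(\mathbb D)$. It also gives $D_WF_{W,\mu}[H]=\int_0^z D_WF'_{W,\mu}[H]$, and hence the estimate \eqref{eq:Herglotz-W-stability}.

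The only nontrivial step is the $L^\infty$ bound on $\mathcal H[H\,dm]$. The real part $P[H]$ is bounded trivially, but the imaginary part requires the Privalov Hölder bound for the conjugate function. This is precisely where $\alpha>0$ is needed: for merely bounded $H$ the conjugate can be unbounded, and the argument would fail. Everything else is bookkeeping of compositions of bounded linear maps with the smooth exponential.
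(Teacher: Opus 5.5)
Your proof is correct and follows the same route as the paper. Both factor $F'_{W,\mu}$ so that only $\exp\{-\mathcal H[W\,dm]\}$ depends on $W$, bound $\|\mathcal H[H\,dm]\|_{L^\infty(\mathbb D)}\le C_\alpha\|H\|_{C^\alpha(\mathbb T)}$ using the $C^\alpha$-boundedness of the conjugate function, use $|F'_{W,\mu}|\le e^{M}$ from $P[\mu]\ge0$ (which is where the $\mu$-independence comes from), and integrate along radii to pass to $W^{1,\infty}$. Your explicit second-order remainder bound for the exponential is a small improvement: it gives genuine Fr\'echet differentiability, whereas the paper only computes the one-parameter (G\^ateaux) derivative.
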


\begin{proof}
For $s\in\mathbb R$,
$$
F'_{W+sH,\mu}
=\exp\{-\mathcal H[W\,dm+\mu]\}\exp\{-s\mathcal H[H\,dm]\}.
$$
Thus
$$
\frac{d}{ds}\bigg|_{s=0}F'_{W+sH,\mu}
=-\mathcal H[H\,dm]F'_{W,\mu}.
$$
The Hilbert transform is bounded on $C^\alpha(\mathbb T)$, and hence the
analytic Herglotz transform of the absolutely continuous measure $H\,dm$
satisfies
$$
    \|\mathcal H[H\,dm]\|_{L^\infty(\mathbb D)}
    \le C_\alpha\|H\|_{C^\alpha(\mathbb T)} .
$$
On the other hand,
$$
    |F'_{W,\mu}(z)|
    =\exp\{-P[W](z)-P[\mu](z)\}
    \le e^{\|W\|_{L^\infty(\mathbb T)}}\le e^M,
$$
since  $P[\mu]\ge0$.  This proves the $L^\infty$ bound for
$D_WF'_{W,\mu}[H]$.  Since
$$
D_WF_{W,\mu}[H](z)=\int_0^zD_WF'_{W,\mu}[H](\xi)\,d\xi,
$$
the same estimate gives the $W^{1,\infty}$ bound in
\eqref{eq:Herglotz-W-stability}.
\end{proof}

For $\tau\ge0$ and $F=F_{W,a,\delta}$, define
$$
    A_{\tau,F}(z):=(1+\tau\operatorname{Im}F(z))^p.
$$
Since $F$ is uniformly bounded and Lipschitz in the small parameter region, for
$\tau$ sufficiently small,
\begin{equation}\label{eq:A-tau-lipschitz-close}
    \frac12\le A_{\tau,F}\le 2,
    \qquad
    \|A_{\tau,F}-1\|_{W^{1,\infty}(\mathbb D)}\le C\tau .
\end{equation}
Moreover, Lemma~\ref{lem:hd-herglotz-stability} implies the outer-variable
coefficient estimate
\begin{equation}\label{eq:A-tau-W-derivative}
    \|D_WA_{\tau,F_{W,a,\delta}}[H]\|_{W^{1,\infty}(\mathbb D)}
    \le C\tau\|H\|_{C^\alpha(\mathbb T)} .
\end{equation}
Indeed,
$$
D_WA_{\tau,F}[H]
=p\tau(1+\tau\operatorname{Im}F)^{p-1}
   \operatorname{Im}D_WF[H],
$$
and one spatial derivative of this expression contains only $F'$, $D_WF[H]$,
and $D_WF'[H]$, all of which are uniformly controlled by
Lemma~\ref{lem:hd-herglotz-stability} in the small parameter region.

For $\rho\in L^\infty(\mathbb D)$, let $\psi$ solve
\begin{equation}\label{eq:rho-equation}
     -\operatorname{div}(A_{\tau,F}\nabla\psi)=\rho
  \quad\text{in }\mathbb D,
  \qquad  \psi=0\quad\text{on }\mathbb T.
\end{equation}
Set
$$
\mathcal T_{\tau,F}\rho:=-2A_{\tau,F}\partial_\nu\psi\big|_{\mathbb T},
$$
where $\nu$ is the outward unit normal to $\mathbb T$, and the constant $2$ comes
from the normalization of measures.  Equivalently, if $U_h$ solves
$$
 \operatorname{div}(A_{\tau,F}\nabla U_h)=0
 \quad\text{in }\mathbb D, \qquad U_h=h\quad\text{on }\mathbb T,
$$
then Green's identity gives
\begin{equation}\label{eq:adjoint-trace-variable}
 \int_{\mathbb D}U_h\rho\,da
 =\int_{\mathbb T}h\,\mathcal T_{\tau,F}\rho\,dm.
\end{equation}
In particular, when $\tau=0$ and $A_{\tau,F}\equiv1$, one has
$\mathcal T_{0,F}=\mathcal T$; see \eqref{eq:T-fubini}.

\begin{lem}\label{lem:weighted-trace-stability}
Choose $q>2$ such that $\alpha<1-2/q$.  Let
$A\in W^{1,\infty}(\mathbb D)$ satisfy
$$
    \frac12\le A\le2,
    \qquad \|A\|_{W^{1,\infty}(\mathbb D)}\le M.
$$
For $\rho\in L^\infty(\mathbb D)$, let $\psi_A\in W^{1,2}_0(\mathbb D)$ solve
$$
    -\operatorname{div}(A\nabla\psi_A)=\rho
    \quad\text{in }\mathbb D,
$$
and set
$$
    \mathcal T_A\rho:=-2A\partial_\nu\psi_A|_{\mathbb T}.
$$
Then
\begin{equation}\label{eq:T-variable-holder}
    \|\mathcal T_A\rho\|_{C^\alpha(\mathbb T)}
    \le C\|\rho\|_{L^\infty(\mathbb D)} .
\end{equation}
If $A_1,A_2$ satisfy the same ellipticity and $W^{1,\infty}$ bounds, then
\begin{equation}\label{eq:T-coefficient-Lipschitz}
    \|\mathcal T_{A_1}\rho-\mathcal T_{A_2}\rho\|_{C^\alpha(\mathbb T)}
    \le
    C\|A_1-A_2\|_{W^{1,\infty}(\mathbb D)}
    \|\rho\|_{L^\infty(\mathbb D)} .
\end{equation}
In particular, for $A_{\tau,F}$ in \eqref{eq:A-tau-lipschitz-close},
\begin{equation}\label{eq:T-variable-perturbation}
    \|\mathcal T_{\tau,F}\rho-\ct\rho\|_{C^\alpha(\mathbb T)}
    \le C\tau\|\rho\|_{L^\infty(\mathbb D)} .
\end{equation}
\end{lem}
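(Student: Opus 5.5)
The plan is to pass through the $W^{2,q}$ regularity of the Dirichlet problem on the smooth domain $\mathbb D$ and then use the Sobolev embedding $W^{2,q}(\mathbb D)\hookrightarrow C^{1,1-2/q}(\overline{\mathbb D})$. This is why $q>2$ with $\alpha<1-2/q$ is assumed.

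\emph{Step 1: the bound \eqref{eq:T-variable-holder}.} Since $A\in W^{1,\infty}$, the divergence-form equation can be rewritten in nondivergence form:
$$
-A\Delta\psi_A-\nabla A\cdot\nabla\psi_A=\rho,
\qquad\text{i.e.}\qquad
-\Delta\psi_A=A^{-1}\bigl(\rho+\nabla A\cdot\nabla\psi_A\bigr),
$$
where $A^{-1}\le 2$. The energy estimate gives $\|\nabla\psi_A\|_{L^2}\le C\|\rho\|_{L^\infty}$, with the constant depending only on ellipticity. The $W^{2,q}$ estimate for the flat Laplacian on $\mathbb D$ with zero Dirichlet data (Calder\'on--Zygmund, e.g.\ \cite[Chapter~9]{GT2001}) then bounds $\|\psi_A\|_{W^{2,q}}$ by the $L^q$ norm of the right-hand side.

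To close the estimate we bootstrap through the lower-order term: $W^{2,2}\subset W^{1,r}$ for every $r<\infty$, so a finite number of iterations starting from $W^{1,2}$ suffices. Alternatively, the $W^{1,q}$ estimate for divergence-form operators with continuous (here Lipschitz) coefficients gives $\nabla\psi_A\in L^q$ directly. Either way,
$$
\|\psi_A\|_{W^{2,q}(\mathbb D)}\le C(M,q)\,\|\rho\|_{L^\infty(\mathbb D)}.
$$
Morrey's embedding gives $\nabla\psi_A\in C^{1-2/q}(\overline{\mathbb D})$, hence $\partial_\nu\psi_A|_{\mathbb T}\in C^{1-2/q}(\mathbb T)\subset C^\alpha(\mathbb T)$. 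Since $A\in W^{1,\infty}$ is Lipschitz up to the boundary, $A|_{\mathbb T}$ is Lipschitz, so the product $-2A\,\partial_\nu\psi_A$ is bounded in $C^\alpha$, which proves \eqref{eq:T-variable-holder}.

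\emph{Step 2: the Lipschitz dependence \eqref{eq:T-coefficient-Lipschitz}.} Let $\psi_i:=\psi_{A_i}$ and $\omega:=\psi_1-\psi_2\in W^{1,2}_0$. Then
$$
-\operatorname{div}(A_1\nabla\omega)=\operatorname{div}\bigl((A_1-A_2)\nabla\psi_2\bigr)
=\nabla(A_1-A_2)\cdot\nabla\psi_2+(A_1-A_2)\Delta\psi_2.
$$
By Step 1, $\|\psi_2\|_{W^{2,q}}\le C\|\rho\|_\infty$, so the right-hand side has $L^q$ norm at most $C\|A_1-A_2\|_{W^{1,\infty}}\|\rho\|_\infty$. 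The bound from Step 1 holds with $L^q$ data as well, since only the $L^q$ norm of the source was used. Applying it gives
$$
\|\omega\|_{W^{2,q}}\le C\|A_1-A_2\|_{W^{1,\infty}}\|\rho\|_\infty .
$$
We then split
$$
A_1\partial_\nu\psi_1-A_2\partial_\nu\psi_2
=A_1\partial_\nu\omega+(A_1-A_2)\,\partial_\nu\psi_2 .
$$
The first term is controlled by the $W^{2,q}$ bound on $\omega$ together with Morrey's embedding. The second is controlled by $\|A_1-A_2\|_{\mathrm{Lip}}\,\|\partial_\nu\psi_2\|_{C^\alpha}$. Together these give \eqref{eq:T-coefficient-Lipschitz}.

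\emph{Step 3: the perturbation bound \eqref{eq:T-variable-perturbation}.} Take $A_1=A_{\tau,F}$ and $A_2\equiv1$, and use \eqref{eq:A-tau-lipschitz-close}, which gives $\|A_{\tau,F}-1\|_{W^{1,\infty}}\le C\tau$. It remains to identify $\mathcal T_1$ with $\ct$. For this, compare \eqref{eq:adjoint-trace-variable} at $A\equiv1$, where $U_h=P[h]$, with \eqref{eq:T-fubini}: both give $\int_{\mathbb D}P[h]\,\rho\,da$ as the pairing of $h$ with the respective trace operator, so $\mathcal T_1\rho$ and $\ct\rho$ agree when tested against every continuous $h$. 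Hence they are equal.

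The main obstacle is the justification of the global $W^{2,q}$ estimate with constants depending only on $M$ and $q$. This is standard for the Laplacian on the disk combined with the first-order bootstrap, and I expect no difficulty there beyond checking that the lower-order term is genuinely controlled at each iteration. A second, smaller point is that Green's identity \eqref{eq:adjoint-trace-variable} must be justified, which follows from $\psi\in C^1(\overline{\mathbb D})$.
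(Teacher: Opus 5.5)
Your proposal is correct and follows the paper's proof essentially step for step: you rewrite the equation in non-divergence form, obtain a global $W^{2,q}$ bound, and pass to $C^\alpha(\mathbb T)$ by the Morrey/trace embedding. You then use the same difference equation $-\operatorname{div}(A_1\nabla\omega)=\operatorname{div}((A_1-A_2)\nabla\psi_2)$, the same splitting $A_1\partial_\nu\omega+(A_1-A_2)\partial_\nu\psi_2$, and finally specialize to $A_1=A_{\tau,F}$, $A_2\equiv1$. The differences are minor elaborations: you derive the $W^{2,q}$ bound from the flat Laplacian plus one bootstrap step instead of quoting the Lipschitz-coefficient estimate, and you justify $\mathcal T_1=\ct$ by duality, where Green's identity is cleanest if you test first against trigonometric polynomials $h$ and then pass to continuous $h$ by density.
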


\begin{proof}
The equation can be written in non-divergence form as
$$
    -A\Delta\psi_A-\nabla A\cdot\nabla\psi_A=\rho .
$$
The disk is smooth and $A$ is uniformly elliptic with Lipschitz coefficients.
The global $W^{2,q}$ Dirichlet estimate for uniformly elliptic equations with
Lipschitz coefficients gives
$$
    \|\psi_A\|_{W^{2,q}(\mathbb D)}
    \le C\|\rho\|_{L^q(\mathbb D)}
    \le C\|\rho\|_{L^\infty(\mathbb D)} .
$$
Since $q>2$ and $\alpha<1-2/q$, the Sobolev trace embedding gives
$$
    \|\partial_\nu\psi_A\|_{C^\alpha(\mathbb T)}
    \le C\|\psi_A\|_{W^{2,q}(\mathbb D)}.
$$
As the trace of $A$ is Lipschitz on $\mathbb T$, this proves
\eqref{eq:T-variable-holder}.

Now let $\psi_j=\psi_{A_j}$, $j=1,2$, and put $w:=\psi_1-\psi_2$.  Subtracting
the equations gives
$$
    -\operatorname{div}(A_1\nabla w)
    =\operatorname{div}\bigl((A_1-A_2)\nabla\psi_2\bigr).
$$
Equivalently,
$$
-A_1\Delta w-\nabla A_1\cdot\nabla w
=(A_1-A_2)\Delta\psi_2+
  \nabla(A_1-A_2)\cdot\nabla\psi_2 .
$$
Using the already proved $W^{2,q}$ estimate for $\psi_2$,
$$
\|(A_1-A_2)\Delta\psi_2+
  \nabla(A_1-A_2)\cdot\nabla\psi_2\|_{L^q}
\le C\|A_1-A_2\|_{W^{1,\infty}}\|\rho\|_{L^\infty}.
$$
Applying the $W^{2,q}$ estimate to $w$ yields
$$
\|w\|_{W^{2,q}}
\le C\|A_1-A_2\|_{W^{1,\infty}}\|\rho\|_{L^\infty}.
$$
Finally,
$$
\mathcal T_{A_1}\rho-\mathcal T_{A_2}\rho
=-2(A_1-A_2)\partial_\nu\psi_2-2A_1\partial_\nu w.
$$
Taking the $C^\alpha(\mathbb T)$ norm and using the trace embedding proves
\eqref{eq:T-coefficient-Lipschitz}.  The special case
\eqref{eq:T-variable-perturbation} follows from
\eqref{eq:A-tau-lipschitz-close} with $A_1=A_{\tau,F}$ and $A_2=1$.
\end{proof}

The elliptic regularity applied above can be found in e.g. \cite[Chapter~9]{GT2001}.

\subsection{Estimates for singular perturbations}

We first record an auxiliary logarithmic-potential estimate, and then prove the two elementary perturbative estimates needed below. The auxiliary estimate bounds the Poisson balayage of $P[\sigma]$.
The first perturbative estimate gives a rate, after Poisson balayage, for the small singular mass. The second controls the imbalance direction in a weak
H\"older norm.

\begin{lem}\label{lem:Astar-log-potential}
Let $\sigma$ be a finite positive measure of class $A^*$ on $\mathbb T$.  Then
$$
\sup_{\zeta\in\mathbb T}\ct(P[\sigma])(\zeta)\le C(\sz)<\infty.
$$
\end{lem}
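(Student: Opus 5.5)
By Fubini and the symmetry of the Poisson kernel, I would first rewrite the quantity as a kernel integral against $\sigma$:
$$
\ct(P[\sigma])(\zeta)=\int_{\mathbb T}K(\zeta,\xi)\,d\sigma(\xi),
\qquad
K(\zeta,\xi):=\int_{\mathbb D}P_z(\zeta)P_z(\xi)\,da(z).
$$
The plan is to compute $K$ explicitly with the Fourier expansion already used in Step~2 of the planar construction. From $P_z(\zeta)=\sum_\ell r^{|\ell|}e^{i\ell(\theta-\phi)}$ and orthogonality in $\theta$, we get
$$
K(e^{i\phi},e^{i\psi})=\sum_{\ell\in\mathbb Z}\frac{1}{|\ell|+1}e^{i\ell(\phi-\psi)}.
$$
This is the kernel of $K=\ct\circ P$, with symbol $1/(|\ell|+1)$. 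It equals $2\,\mathrm{Re}\sum_{\ell\ge0}\frac{w^\ell}{\ell+1}-1$ with $w=e^{i(\phi-\psi)}$, that is,
$$
K(\zeta,\xi)=2\log\frac{1}{|\zeta-\xi|}+O(1).
$$
So $K$ is a logarithmic kernel, and the lemma reduces to a uniform bound on the logarithmic potential of $\sigma$:
$$
\sup_{\zeta}\int_{\mathbb T}\log\frac{2}{|\zeta-\xi|}\,d\sigma(\xi)<\infty.
$$

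To prove this bound, I would use the $A^*$ condition. Let $V$ be a cumulative distribution function of $\sigma$, which satisfies \eqref{eq:Astar-function}. By \eqref{eq:Astar-continuity},
$$
\sigma(\{\xi:|\xi-\zeta|<h\})\le V(t+h)-V(t-h)\le Ch\log\frac2h
$$
uniformly in $\zeta=e^{it}$. Then I would split the potential dyadically over annuli $2^{-k-1}\le|\xi-\zeta|<2^{-k}$:
$$
\int\log\frac{2}{|\zeta-\xi|}\,d\sigma(\xi)\le \sum_{k\ge0}(k+2)\log 2\cdot C2^{-k}(k+1)\log2,
$$
which converges. Two further points need checking. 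Atoms carry no mass, since $V$ is continuous. The diagonal itself contributes nothing, because $\sigma(\{\zeta\})=0$ and the annular bound already controls all small balls.

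The main obstacle, though a mild one, is justifying the interchange and the kernel formula: the Fourier series for $K$ only converges conditionally at the diagonal. I would handle this by working with positive integrands, for which Tonelli applies. Concretely, I would compute $K$ for $\zeta\neq\xi$, where the Abel sum equals $-2\log|1-w|$ exactly, and note that the diagonal is $\sigma$-null. Alternatively, I could bound $P_z(\zeta)P_z(\xi)$ directly and integrate, which gives $K\lesssim 1+\log(2/|\zeta-\xi|)$ by splitting $\mathbb D$ according to whether $1-|z|$ is larger or smaller than $|\zeta-\xi|$. The constant $C(\sigma)$ then depends only on $\sigma(\mathbb T)$ and the $A^*$ constant of $V$.
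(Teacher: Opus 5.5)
Your proposal is correct and follows essentially the same route as the paper: rewrite $\mathcal T(P[\sigma])(\zeta)$ as $\int K(\zeta,\xi)\,d\sigma(\xi)$ with $\widehat K(k)=1/(|k|+1)$, bound $K\le C\bigl(1+\log(2/|\zeta-\xi|)\bigr)$, and sum the $A^*$ estimate $\sigma(I(\zeta,h))\le C h\log(2/h)$ over dyadic annuli; your remarks on Tonelli and on $\sigma(\{\zeta\})=0$ spell out points the paper leaves implicit. One small slip: off the diagonal the Abel sum is $-2\operatorname{Re}(\bar w\log(1-w))-1$, not exactly $-2\log|1-w|$, but it equals $-2\log|1-w|+O(1)$, which is all you need (and your annuli starting at $k=0$ miss the region $|\zeta-\xi|\ge 1$, which contributes at most $\sigma(\mathbb T)\log 2$).
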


\begin{proof}
Recall that
$$
P_z(\zeta)=\frac{1-|z|^2}{|\zeta-z|^2}, \quad z\in \mathbb D, \ \zeta\in \mathbb T,
$$
and write
$$
 K(\zeta,\eta):=\int_{\mathbb D}P_z(\zeta)P_z(\eta)\,da(z).
$$
Then
\begin{equation}\label{eq:T-K}
\mathcal T(P[\sigma])(\zeta)=\int_{\mathbb T}K(\zeta,\eta)\,d\sigma(\eta).  
\end{equation}

We first estimate $K$.  Write $z=re^{i\theta}$, $\zeta=e^{i\varphi}$,
and $\eta=e^{is}$.  Since
$$
P_z(\zeta)=\sum_{k\in\mathbb Z}r^{|k|}e^{ik(\theta-\varphi)},
$$
the orthogonality of the Fourier modes and the normalization $da=2r\,dr\,dm(e^{i\theta})$
give
\begin{align*}
    K(\zeta,\eta)=& \ \sum_{k,\,l\in \mathbb Z} r^{|k|+|l|} e^{-ik\varphi}e^{-i l s} \int_{\mathbb T} e^{i(k+l)\theta}\,dm(e^{i\theta})\\
=& \  \sum_{k\in\mathbb Z}
\left(\int_0^1 2r^{2|k|+1}\,dr\right)e^{ik(s-\varphi)}
=  \sum_{k\in\mathbb Z}\frac{e^{ik(s-\varphi)}}{|k|+1}.
\end{align*}
Consequently, $K$ is the convolution kernel whose Fourier coefficients are 
$$\widehat K(k)=\frac{1}{|k|+1},$$
i.e. it is the logarithmic kernel on the circle with
\begin{equation}\label{eq:estimate-K}
0\le K(\zeta,\eta)
 \le C\left(1+\log\frac{2}{|\zeta-\eta|}\right).    
\end{equation}

Since $\sigma$ is a positive $A^\ast$-measure, its distribution function belongs to the Zygmund class. Hence by \eqref{eq:Astar-continuity}, its modulus of continuity satisfies
$$
\sigma(I(\zeta,h)) \le  C_\sigma h\log\frac{2}{h},  \qquad 0<h<1,
$$
for every arc $I(\zeta,h)$ centered at $\zeta$ of length   $2h$.
Fix $\zeta\in\mathbb T$, and set
$$
 A_j(\zeta) := \{\eta\in\mathbb T:2^{-j-1}<|\zeta-\eta|\le 2^{-j}\},  \qquad j=0,1,2,\ldots .
$$
On each $A_j(\zeta)$ one has
$$
 \log\frac{2}{|\zeta-\eta|}\le C(j+1),
$$
while the $A^\ast$-modulus estimate gives
$$
 \sigma(A_j(\zeta))  \le  C_\sigma 2^{-j}(j+1).
$$
Therefore, plugging this together with \eqref{eq:estimate-K} into \eqref{eq:T-K} yields
\begin{align*}
\mathcal  T(P[\sigma])(\zeta)
&\le C\sigma(\mathbb T)
+  C\sum_{j=0}^\infty (j+1)\sigma(A_j(\zeta)) \\
&\le C\sigma(\mathbb T)
+ C_\sigma\sum_{j=0}^\infty (j+1)^2 2^{-j}
<\infty .
\end{align*}
The bound is independent of $\zeta$ and thus we conclude the lemma.
\end{proof}

\begin{lem}\label{lem:singular-small-rate}
Let $\sigma$ be a finite positive $A^*$ measure on $\mathbb T$.  Let
$$
    E_a(z):=1-e^{-2aP[\sigma](z)}.
$$
Let $0<\alpha<\beta<1$. Then there exists $C=C(\az,\bz, \sz)$ so that
\begin{equation}\label{eq:singular-small-rate}
    \|\ct(E_a Q)\|_{C^\alpha(\mathbb T)}
    \le C a^{1-\alpha/\beta}\|Q\|_{L^\infty(\mathbb D)}
\end{equation}
for every $Q\in L^\infty(\mathbb D)$.
\end{lem}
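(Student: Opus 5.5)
We interpolate between two bounds for $\ct(E_aQ)$: an $L^\infty$ bound that is linear in $a$, and a $C^\beta$ bound that is uniform in $a$. Then the elementary interpolation inequality for H\"older spaces on $\mathbb T$,
$$
\|g\|_{C^\alpha}\le C\|g\|_{L^\infty}^{1-\alpha/\beta}\|g\|_{C^\beta}^{\alpha/\beta},
$$
gives the rate $a^{1-\alpha/\beta}$. (If $\|g\|_{L^\infty}$ is large compared with the $\beta$-seminorm, the bound is trivial. Otherwise split $|\xi-\eta|$ at the scale $t=(\|g\|_\infty/[g]_\beta)^{1/\beta}$ and use $|g(\xi)-g(\eta)|\le\min\{2\|g\|_\infty,[g]_\beta|\xi-\eta|^\beta\}$.)

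\emph{The $C^\beta$ bound.} We have $0\le E_a\le1$, because $P[\sigma]\ge0$. So $\|E_aQ\|_{L^\infty(\mathbb D)}\le\|Q\|_{L^\infty(\mathbb D)}$, and Lemma~\ref{lem:T-holder} with exponent $\beta$ gives
$$
\|\ct(E_aQ)\|_{C^\beta(\mathbb T)}\le C_\beta\|Q\|_{L^\infty(\mathbb D)},
$$
uniformly in $a$.

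\emph{The $L^\infty$ bound.} We use $1-e^{-x}\le x$ for $x\ge0$, so $0\le E_a\le 2aP[\sigma]$. Since the Poisson kernel is positive,
$$
|\ct(E_aQ)(\zeta)|\le \|Q\|_{L^\infty}\,\ct(E_a)(\zeta)\le 2a\|Q\|_{L^\infty}\,\ct(P[\sigma])(\zeta).
$$
By Lemma~\ref{lem:Astar-log-potential}, $\ct(P[\sigma])\le C(\sigma)$ uniformly on $\mathbb T$. Hence $\|\ct(E_aQ)\|_{L^\infty(\mathbb T)}\le C(\sigma)\,a\,\|Q\|_{L^\infty}$.

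\emph{Combining.} Interpolating the two bounds gives
$$
\|\ct(E_aQ)\|_{C^\alpha}\le C\,(a\|Q\|_\infty)^{1-\alpha/\beta}\|Q\|_\infty^{\alpha/\beta}=C a^{1-\alpha/\beta}\|Q\|_\infty.
$$
The $L^\infty$ norm contributes to the $C^\alpha$ norm at order $a\le a^{1-\alpha/\beta}$ when $a\le1$. For $a\ge1$ the estimate follows directly from the uniform $C^\beta$ bound.

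\emph{Main obstacle.} The only nontrivial input is the uniform bound on $\ct(P[\sigma])$. The function $P[\sigma]$ is unbounded near the support of the singular measure, so the linear-in-$a$ $L^\infty$ estimate depends on the $A^*$ (logarithmic) regularity of $\sigma$, which Lemma~\ref{lem:Astar-log-potential} supplies. The H\"older interpolation step itself is routine.
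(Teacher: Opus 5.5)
Your proposal is correct and follows the paper's proof essentially verbatim. Like the paper, you combine the linear-in-$a$ $L^\infty$ bound (from $0\le E_a\le 2aP[\sigma]$ and Lemma~\ref{lem:Astar-log-potential}) with the uniform $C^\beta$ bound from Lemma~\ref{lem:T-holder}, and then apply H\"older interpolation. Your extra remarks, the sketch of the interpolation inequality and the separate treatment of $a\ge1$, are correct. They are not needed, however, since interpolating with the full norms already gives the estimate for every $a>0$.
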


\begin{proof}
Since $0\le E_a\le 2aP[\sigma]$, positivity of the Poisson kernel gives
$$
|\mathcal T(E_aQ)(\zeta)|
\le\|Q\|_{L^\infty(\mathbb D)}\,\ct (E_a)(\zeta)
\le    2a\|Q\|_{L^\infty(\mathbb D)}\,\ct (P[\sigma])(\zeta).
$$
Thus, Lemma~\ref{lem:Astar-log-potential} gives
$$
    \|\ct(E_aQ)\|_{L^\infty(\mathbb T)}
    \le C_\sz a\|Q\|_{L^\infty}.
$$
On the other hand,  since  $0\le E_a\le1$,  Lemma~\ref{lem:T-holder} gives
$$
    \|\ct(E_aQ)\|_{C^\beta(\mathbb T)}
    \le C_\beta\|Q\|_{L^\infty}.
$$
Using the interpolation inequality
$$
\|g\|_{C^\alpha(\mathbb T)}\le C_{\az,\bz}\|g\|_{L^\infty(\mathbb T)}^{1-\alpha/\beta}\|g\|_{C^\beta(\mathbb T)}^{\alpha/\beta},     \qquad 0<\alpha<\beta<1,
$$
we obtain \eqref{eq:singular-small-rate}.
\end{proof}

\begin{lem}\label{lem:imbalance-weak-bound}
Let $0<\gamma<1$ and $0<\alpha<\gamma$.  Suppose $\rho$ satisfies
$$|\rho(z)|\le |P[\nu](z)|e^{-2aP[\Sigma](z)},$$
where, recall $\nu$ and $\Sigma$ are defined in \eqref{eq:defn-nu-sigma} with $|P[\nu]|\le P[\Sigma]$.  Then
\begin{equation}\label{eq:imbalance-weak-bound}
\|\ct(\rho Q)\|_{C^\alpha(\mathbb T)} \le C a^{-\gamma}\|Q\|_{L^\infty(\mathbb D)}.
\end{equation}
\end{lem}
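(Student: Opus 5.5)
The plan is to interpolate between an $L^\infty$ bound and a $C^\beta$ bound, exactly as in Lemma~\ref{lem:singular-small-rate}, but now with a bound on $\rho$ that blows up as $a\to0$. First, from the hypothesis $|\rho|\le P[\Sigma]e^{-2aP[\Sigma]}$ and the elementary inequality $te^{-2at}\le (2ea)^{-1}$ for $t\ge0$, we get
$$
\|\rho\|_{L^\infty(\mathbb D)}\le \frac{1}{2e\,a}.
$$
By Lemma~\ref{lem:T-holder}, for any $\beta\in(\alpha,1)$ this gives
$$
\|\ct(\rho Q)\|_{C^\beta(\mathbb T)}\le C_\beta a^{-1}\|Q\|_{L^\infty(\mathbb D)}.
$$
Second, for the sup norm we drop the exponential and use positivity of the Poisson kernel:
$$
|\ct(\rho Q)|\le \|Q\|_\infty\,\ct(P[\Sigma]),
$$
and Lemma~\ref{lem:Astar-log-potential}, applied to the positive $A^*$ measure $\Sigma$, yields $\|\ct(\rho Q)\|_{L^\infty(\mathbb T)}\le C_\Sigma\|Q\|_\infty$, uniformly in $a$.

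Third, we interpolate with the inequality $\|g\|_{C^\alpha}\le C\|g\|_\infty^{1-\alpha/\beta}\|g\|_{C^\beta}^{\alpha/\beta}$ used in the proof of Lemma~\ref{lem:singular-small-rate}. This gives
$$
\|\ct(\rho Q)\|_{C^\alpha(\mathbb T)}\le C\,a^{-\alpha/\beta}\|Q\|_{L^\infty(\mathbb D)}.
$$
Since $\alpha<\gamma<1$, we may choose $\beta:=\alpha/\gamma\in(\alpha,1)$, so that $\alpha/\beta=\gamma$. This is exactly \eqref{eq:imbalance-weak-bound}, with $C$ depending on $\alpha,\gamma,\Sigma$. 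For $a\ge1$, or any bounded range of $a$, the estimate is trivial after enlarging the constant, since $a^{-\gamma}$ is then bounded below and the uniform $L^\infty$ and $C^\beta$ bounds already suffice; in the application $a$ is small anyway.

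The only genuinely nontrivial ingredient is the uniform-in-$a$ sup bound, and it is supplied by Lemma~\ref{lem:Astar-log-potential}. The step needing some care is the $C^\beta$ estimate. It relies on Lemma~\ref{lem:T-holder} applied to $\rho Q\in L^\infty$, whose norm is of order $a^{-1}$. That is where the negative power of $a$ enters, and the interpolation trades it down to $a^{-\gamma}$. If one wanted a sharper rate, one could instead split $\mathbb D$ into the boundary strip $\{1-\delta<|z|<1\}$ and its complement and use \eqref{eq:T-strip-bound}, but the interpolation argument already gives the stated bound.
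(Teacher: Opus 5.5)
Your argument is correct, but it takes a different route from the paper. You interpolate, in the same way as in Lemma~\ref{lem:singular-small-rate}, between two bounds. The first is a uniform sup bound $\|\mathcal T(\rho Q)\|_{L^\infty}\le C_\Sigma\|Q\|_{L^\infty}$, obtained by discarding the exponential and invoking Lemma~\ref{lem:Astar-log-potential}. The second is the crude bound $\|\mathcal T(\rho Q)\|_{C^\beta}\le C_\beta a^{-1}\|Q\|_{L^\infty}$, which follows from $\sup_{t\ge0}te^{-2at}=(2ea)^{-1}$ and Lemma~\ref{lem:T-holder}. The choice $\beta=\alpha/\gamma\in(\alpha,1)$ then produces exactly $a^{-\gamma}$.

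The paper instead splits off the power of $a$ pointwise, using $Xe^{-2aX}\le C_\gamma a^{-\gamma}X^{1-\gamma}$, so that $|\rho|\le C_\gamma a^{-\gamma}(P[\Sigma])^{1-\gamma}$. It then proves an $a$-independent $C^\alpha$ bound for $\mathcal T((P[\Sigma])^{1-\gamma}Q)$ by H\"older's inequality on each circle. On the one hand, $(P[\Sigma])^{1-\gamma}$ has $L^{1/(1-\gamma)}$ norm $\Sigma(\mathbb T)^{1-\gamma}$ on every circle. On the other, the $L^{1/\gamma}$ norm of $P_r(\cdot+h)-P_r$ is $O\bigl(|h|^\alpha(1-r)^{-(1-\gamma)-\alpha}\bigr)$, which is integrable in $r$ precisely when $\alpha<\gamma$.

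The trade-off is this. The paper's route uses only the total mass of $\Sigma$, so it holds for an arbitrary finite positive measure. Yours needs the boundedness of the logarithmic potential $\mathcal T(P[\Sigma])$, i.e.\ the $A^*$ regularity of $\Sigma$. That regularity is available here, since $\Sigma$ is built from an $A^*$ measure by a reflection and a rotation. In exchange, your proof is shorter and reuses two lemmas already in place.

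Your closing remark about $a\ge1$ is inaccurate: $a^{-\gamma}$ is not bounded below on $[1,\infty)$. It is also unnecessary, because both of your bounds hold for every $a>0$, so the interpolation already gives the estimate on the whole range.
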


\begin{proof}
Set $q:=1-\gamma.$
Then $0<q<1$, and the assumption $\alpha<\gamma$ is the same as
$$
q+\alpha<1.
$$
Since $|P[\nu]|\le P[\Sigma]$, we have
$$
|\rho(z)| \le P[\Sigma](z)e^{-2aP[\Sigma](z)}.
$$

Note that, for every $X\ge0$,
$$
Xe^{-2aX} =a^{-\gamma}X^{1-\gamma}(aX)^\gamma e^{-2aX}
\le C_\gamma a^{-\gamma}X^{1-\gamma}.
$$
Therefore
\begin{equation}\label{eq:rho}
    |\rho(z)|\le C_\gamma a^{-\gamma}(P[\Sigma](z))^q,
\end{equation}
and it is enough to prove
\begin{equation}\label{eq: PQ}
\|\mathcal T((P[\Sigma])^q Q)\|_{C^\alpha(\mathbb T)}
\le C_\gz\|Q\|_{L^\infty(\mathbb D)} .
\end{equation}

Write $\mathcal B(z):=(P[\Sigma](z))^q.$
We first record an $L^{1/q}$-bound for $\mathcal  B$  on circles. Since the Poisson kernel preserves mass,
$$    
\int_{\mathbb T}P[\Sigma](re^{i\theta})\,dm(e^{i\theta})
=\Sigma(\mathbb T).
$$
Then
\begin{equation}\label{eq:mathcalB}
    \|\mathcal B(re^{i\theta})\|_{L^{1/q}(\mathbb T)}
    = \left( \int_{\mathbb T}P[\Sigma](re^{i\theta})\,dm(e^{i\theta})
    \right)^q
    = \Sigma(\mathbb T)^q.
\end{equation}

Let
$$
s:=\frac1{1-q}=\frac1\gamma ,
$$
so that $s'=\frac1 q$. Recall that the Poisson kernel satisfies
\begin{equation}\label{eq:Prs}
\|P_r\|_{L^s(\mathbb T)}
\le C(1-r)^{-q} \quad 
\text{ and }\quad 
\|\partial_tP_r\|_{L^s(\mathbb T)}
\le C(1-r)^{-q-1}.    
\end{equation}
Hence, for every $h$ with $|h|$ small
\begin{align}
\|P_r(\cdot+h)-P_r\|_{L^s(\mathbb T)}
&\le C\min\{(1-r)^{-q},\, |h|(1-r)^{-q-1}\} \notag\\
&\le C |h|^\alpha (1-r)^{-q-\alpha},\label{eq:Pr}
\end{align}
where in the last step we applied, for $a,b>0$, 
$$\min\{a,b\}\le a^{1-\alpha}b^\alpha.$$

Now write $g=\mathcal T(\mathcal B Q)$. Then for $\zeta=e^{i\varphi}\in \mathbb T$,
$$
 g(\zeta)=\int_0^1\int_{\mathbb T}
P_r(\varphi-\theta)\mathcal B(re^{i\theta})Q(re^{i\theta})
\,dm(e^{i\theta})\,2r\,dr .
$$
Therefore, applying \eqref{eq:mathcalB} and \eqref{eq:Pr} , via H\"older's inequality
\begin{align*}
|g(e^{i(\varphi+h)})-g(e^{i\varphi})|
&\le \|Q\|_{L^\infty} \int_0^1 \|P_r(\cdot+h)-P_r\|_{L^s(\mathbb T)}  \|\mathcal B(re^{i\theta})\|_{L^{s'}(\mathbb T)}\,2r\,dr \\
&\le  C_\gz\|Q\|_{L^\infty}|h|^\alpha \int_0^1(1-r)^{-q-\alpha}\,dr.
\end{align*}
The last integral is finite since $q+\alpha<1$. Thus
$$
[g]_{C^\alpha(\mathbb T)}
\le C_\gz\|Q\|_{L^\infty(\mathbb D)}.
$$
The $L^\infty$-bound follows in a similar way from \eqref{eq:Prs}
together with the integrability of $(1-r)^{-q}$ since $q<1$. Hence \eqref{eq: PQ} follows. Then multiplying \eqref{eq: PQ} by $C_\gamma a^{-\gamma}$ and applying \eqref{eq:rho}, we conclude \eqref{eq:imbalance-weak-bound}. 
\end{proof}

\subsection{The weighted DSS equation}
Let $f\in C^2((-\rho,\rho))$ and set
$f_0:=f(0)>0.$

\medskip
\paragraph{\it The weighted residual.}
For fixed small parameters $(W,a,\delta,\tau,\lambda)$, with
$W\in\mathcal Y_\alpha$, let $F=F_{W,a,\delta}$ be the conformal map defined in
\eqref{eq:hd-F-prime-corrected}.
\begin{lem}\label{lem:weighted-semilinear-disk-estimate}
Let $s=\lambda^2$.  For all sufficiently small
$s$, $\tau$, $a$, $|\delta|$, and $\|W\|_{C^\alpha}$, the boundary value problem
\begin{equation}\label{eq:hd-disk-semilinear-corrected}
    -\operatorname{div}(A_{\tau,F}\nabla v)
    =A_{\tau,F}|F'|^2 f(sv)
    \quad\text{in }\mathbb D,
    \qquad
    v=0\quad\text{on }\mathbb T
\end{equation}
has a unique bounded solution, denoted by $v_{W,a,\delta,\tau,\lambda}$.
Moreover, for the fixed $q>2$ chosen above,
$$
    \|v_{W,a,\delta,\tau,\lambda}\|_{L^\infty(\mathbb D)}
    +\|v_{W,a,\delta,\tau,\lambda}\|_{W^{2,q}(\mathbb D)}
    \le C,
$$
and
\begin{equation}\label{eq:hd-f-small-corrected}
    f(\lambda^2v_{W,a,\delta,\tau,\lambda})=f_0+O(\lambda^2)
    \quad\text{in }L^\infty(\mathbb D),
\end{equation}
uniformly for the remaining parameters in the small region.  In addition,
$W\mapsto v_{W,a,\delta,\tau,\lambda}$ is Frechet differentiable and
\begin{equation}\label{eq:D-W-v-bound}
    \|D_Wv_{W,a,\delta,\tau,\lambda}[H]\|_{L^\infty(\mathbb D)}
    \le C\|H\|_{C^\alpha(\mathbb T)}.
\end{equation}
Consequently,
\begin{equation}\label{eq:D-W-f-lambda2-bound}
    \|D_W(f(\lambda^2v_{W,a,\delta,\tau,\lambda}))[H]\|_{L^\infty(\mathbb D)}
    \le C\lambda^2\|H\|_{C^\alpha(\mathbb T)}.
\end{equation}
\end{lem}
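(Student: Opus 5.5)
I would follow the proof of Lemma~\ref{lem:semilinear-disk-estimate} essentially verbatim, replacing the Dirichlet Laplacian by the divergence-form operator with coefficient $A_{\tau,F}$. By \eqref{eq:A-tau-lipschitz-close}, $\frac12\le A_{\tau,F}\le2$ and $\|A_{\tau,F}\|_{W^{1,\infty}(\mathbb D)}\le M$ uniformly in the small parameter region. By \eqref{eq:Fprime-upper-bound}, the density $A_{\tau,F}|F'|^2$ is bounded above by $2e^{2\|W\|_{L^\infty}}\le C_0$, again uniformly, because $P[\mu_{a,\delta}]\ge0$ when $|\delta|<a$.

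Let $\mathfrak G_A$ be the solution operator $\rho\mapsto\psi_A$ of \eqref{eq:rho-equation}. The global $W^{2,q}$ estimate used in the proof of Lemma~\ref{lem:weighted-trace-stability}, combined with the Sobolev embedding $W^{2,q}\hookrightarrow L^\infty$ for $q>2$, gives
$$\|\mathfrak G_A\rho\|_{L^\infty(\mathbb D)}+\|\mathfrak G_A\rho\|_{W^{2,q}(\mathbb D)}\le C_{\mathbb D}\|\rho\|_{L^\infty(\mathbb D)},$$
with $C_{\mathbb D}$ depending only on the ellipticity and $M$. An alternative for the $L^\infty$ part is the De Giorgi/Stampacchia maximum principle.

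Next I would fix $R$ with $C_{\mathbb D}C_0\sup_{|t|\le\rho/2}|f(t)|\le R$. Then I would take $s=\lambda^2$ small enough that
$$sR<\rho/2\qquad\text{and}\qquad C_{\mathbb D}C_0\,s\sup_{|t|\le\rho/2}|f'(t)|<1.$$
With these choices the map
$$\mathcal A(v):=\mathfrak G_A\bigl(A_{\tau,F}|F'|^2f(sv)\bigr)$$
sends $\{\|v\|_\infty\le R\}$ into itself and is a contraction. This gives a unique bounded solution. Feeding it back into the $W^{2,q}$ estimate yields the uniform $L^\infty+W^{2,q}$ bound. The estimate \eqref{eq:hd-f-small-corrected} then follows from $|f(sv)-f_0|\le s\,\|f'\|_\infty R$.

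For differentiability in $W$, the coefficient $A_{\tau,F}$ and the density $|F'|^2$ both depend on $W$. Lemma~\ref{lem:hd-herglotz-stability} and \eqref{eq:A-tau-W-derivative} give $\|D_WA_{\tau,F}[H]\|_{W^{1,\infty}}\le C\tau\|H\|_{C^\alpha}$. They also give $D_W|F'|^2[H]=-2P[H]|F'|^2$, which is bounded in $L^\infty$ by $C\|H\|_{C^\alpha}$. Formally differentiating, $\dot v:=D_Wv[H]$ satisfies
$$-\operatorname{div}(A\nabla\dot v)-sA|F'|^2f'(sv)\dot v=\operatorname{div}(\dot A\nabla v)+\bigl(\dot A|F'|^2-2AP[H]|F'|^2\bigr)f(sv),\qquad \dot v|_{\mathbb T}=0,$$
with $\dot A=D_WA_{\tau,F}[H]$.

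The step needing care is the divergence term $\operatorname{div}(\dot A\nabla v)$, since this is not an $L^\infty$ right-hand side a priori. I would handle it using $v\in W^{2,q}$ and $\dot A\in W^{1,\infty}$. Expanding, $\operatorname{div}(\dot A\nabla v)=\dot A\Delta v+\nabla\dot A\cdot\nabla v$, which lies in $L^q$ with norm at most $C\tau\|H\|_{C^\alpha}$. The $W^{2,q}$ estimate with $L^q$ data, as in the proof of \eqref{eq:T-coefficient-Lipschitz}, and the embedding into $L^\infty$ then give the bound for the whole right-hand side. The zeroth-order term on the left has size $O(s)$ and is absorbed by the smallness of $s$. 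This yields \eqref{eq:D-W-v-bound}.

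Rigorous Fréchet differentiability follows from the implicit function theorem applied to $(W,v)\mapsto v-\mathcal A_W(v)$ on $\mathcal Y_\alpha\times W^{2,q}\cap W^{1,2}_0$. The map is $C^1$ by Lemma~\ref{lem:hd-herglotz-stability}, and its $v$-derivative is $I-O(s)$, hence invertible. Finally, \eqref{eq:D-W-f-lambda2-bound} follows from the chain rule,
$$D_W f(\lambda^2v)[H]=\lambda^2f'(\lambda^2v)\dot v,$$
together with \eqref{eq:D-W-v-bound}.
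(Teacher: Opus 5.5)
Your proposal is correct and follows the paper's proof essentially step for step. Existence comes from a contraction for $v\mapsto\mathcal G_{A_{\tau,F}}\bigl(A_{\tau,F}|F'|^2f(sv)\bigr)$ on an $L^\infty$ ball, using the uniform $W^{2,q}$ and $L^\infty$ bound for the divergence-form Dirichlet problem. The derivative bound comes from the differentiated equation: $\operatorname{div}(\dot A\nabla v)=\dot A\Delta v+\nabla\dot A\cdot\nabla v$ is placed in $L^q$ via the $W^{2,q}$ bound on $v$, and the $O(s)\dot v$ term is absorbed.

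Your implicit-function-theorem justification of Fr\'echet differentiability is a small addition that the paper leaves implicit. As in the paper, the uniqueness you actually obtain is uniqueness within the ball $\{\|v\|_{L^\infty}\le R\}$, not among all bounded solutions.
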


\begin{proof}
Let $\mathcal G_A$ denote the zero-Dirichlet inverse of
$-\operatorname{div}(A\nabla\cdot)$ on $\mathbb D$.  By the maximum principle and
the preceding $W^{2,q}$ estimates,
$$
    \|\mathcal G_A g\|_{L^\infty}
    +\|\mathcal G_A g\|_{W^{2,q}}
    \le C\|g\|_{L^\infty}
$$
uniformly for $A=A_{\tau,F}$ in the small parameter region.  Put
$$
    B:=A_{\tau,F}|F'|^2 .
$$
By \eqref{eq:Fprime-upper-bound} and \eqref{eq:A-tau-lipschitz-close},
$0\le B\le C$.  Consider
$$
    \mathcal M(v):=\mathcal G_{A_{\tau,F}}(Bf(sv)).
$$
For $R>0$ large and $s$ small, $\mathcal M$ maps the ball
$\|v\|_{L^\infty}\le R$ into itself.  Moreover,
$$
\|\mathcal M(v_1)-\mathcal M(v_2)\|_{L^\infty}
\le Cs\|v_1-v_2\|_{L^\infty},
$$
so $\mathcal M$ is a contraction for $s$ sufficiently small.  This gives existence,
uniqueness, and the uniform $L^\infty$ bound.  The $W^{2,q}$ bound follows from
the equation.
Since $f\in C^2$ and $\|v\|_{L^\infty}\le C$,
$$
    \|f(sv)-f(0)\|_{L^\infty}\le Cs,
$$
which is \eqref{eq:hd-f-small-corrected}.

It remains to record the $W$-derivative bound.  Let
$\dot v=D_Wv[H]$, $\dot A=D_WA_{\tau,F}[H]$, and
$\dot B=D_W(A_{\tau,F}|F'|^2)[H]$.  Differentiating the equation gives
$$
-\operatorname{div}(A_{\tau,F}\nabla\dot v)
=\operatorname{div}(\dot A\nabla v)+\dot B f(sv)+Bsf'(sv)\dot v .
$$
The divergence term may be rewritten as
$$
\operatorname{div}(\dot A\nabla v)=\nabla\dot A\cdot\nabla v+
\dot A\Delta v .
$$
By \eqref{eq:A-tau-W-derivative}, Lemma~\ref{lem:hd-herglotz-stability}, and the
uniform $W^{2,q}$ bound for $v$, the right-hand side is bounded in $L^q$ by
$$
    C\|H\|_{C^\alpha}+Cs\|\dot v\|_{L^\infty}.
$$
The $W^{2,q}$ estimate and the embedding $W^{2,q}\hookrightarrow L^\infty$ give
$$
\|\dot v\|_{L^\infty}
\le C\|H\|_{C^\alpha}+Cs\|\dot v\|_{L^\infty}.
$$
For $s$ small we absorb the last term and obtain \eqref{eq:D-W-v-bound}.  Finally,
$$
    D_W(f(sv))[H]=sf'(sv)\dot v,
$$
which proves \eqref{eq:D-W-f-lambda2-bound}.
\end{proof}

For the chosen small parameters, let $v_{W,a,\delta,\tau,\lambda}$ denote the
solution given by Lemma~\ref{lem:weighted-semilinear-disk-estimate}. 
Recall the definition of $F'$ in \eqref{eq:hd-F-prime-corrected}.
Define the interior density
$$
    \rho_{W,a,\delta,\tau,\lambda}
    :=A_{\tau,F}|F'|^2 f(\lambda^2v_{W,a,\delta,\tau,\lambda}).
$$
The weighted boundary equation is
$$
    \mathcal T_{\tau,F}(\rho_{W,a,\delta,\tau,\lambda})
    =C\,(A_{\tau,F})^*|(F'{})^*|
    \quad\text{on }\mathbb T
$$
for some positive constant $C$, where $^*$ denotes the boundary trace on $\mathbb T$.  Equivalently, after subtracting the boundary mean,
we need to solve
\begin{equation}\label{eq:hd-residual-corrected}
\Phi(W,a,\delta,\tau,\lambda)
:=\mathcal P_0\left[ \log\mathcal T_{\tau,F}(\rho_{W,a,\delta,\tau,\lambda})
-\log\bigl((A_{\tau,F})^*|(F'{})^*|\bigr)
\right]=0,
\end{equation}
where we recall $\mathcal P_0$  denotes the mean-zero projection; one can compare with the planar case \eqref{eq:Psi-def}.

\begin{lem}\label{lem:uniform-log-lower-bound}
After shrinking the small parameter region, there exists $c_0>0$ such that
\begin{equation}\label{eq:uniform-log-lower-bound}
    \mathcal T_{\tau,F}(\rho_{W,a,\delta,\tau,\lambda})\ge c_0
    \qquad\text{on }\mathbb T
\end{equation}
whenever $\|W\|_{C^\alpha}$, $a$, $\tau$, and $\lambda$ are sufficiently small and
$|\delta|\le a/2$.
\end{lem}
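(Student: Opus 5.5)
The plan is to reduce the bound to a uniform interior lower bound for the density $\rho$ on a fixed compact disk, and then use positivity of the Poisson balayage. The operator $\mathcal T_{\tau,F}$ is a perturbation of $\ct$, and the error is controlled by \eqref{eq:T-variable-perturbation}. Concretely, we write
$$
\mathcal T_{\tau,F}\rho=\ct\rho+(\mathcal T_{\tau,F}\rho-\ct\rho).
$$
For the second term we use $\|\rho\|_{L^\infty}\le C$, which follows from \eqref{eq:Fprime-upper-bound}, \eqref{eq:A-tau-lipschitz-close} and \eqref{eq:hd-f-small-corrected}. Then \eqref{eq:T-variable-perturbation} gives
$$
\|\mathcal T_{\tau,F}\rho-\ct\rho\|_{L^\infty(\mathbb T)}\le C\tau .
$$
It therefore suffices to show $\ct\rho\ge 2c_0$ on $\mathbb T$, and then to take $\tau$ so small that $C\tau\le c_0$.

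For the lower bound on $\ct\rho$ we use that $\rho\ge0$ and $P_z(\zeta)\ge\frac{1-|z|}{1+|z|}\ge\frac13$ for $|z|\le\frac12$. This gives
$$
\ct\rho(\zeta)\ge\frac13\int_{\{|z|\le1/2\}}\rho\,da .
$$
On $|z|\le\frac12$ each factor of $\rho$ is bounded below.
\begin{itemize}
\item $A_{\tau,F}\ge\frac12$ by \eqref{eq:A-tau-lipschitz-close}.
\item $f(\lambda^2v)\ge f_0/2$ for $\lambda$ small, by \eqref{eq:hd-f-small-corrected}.
\item $|F'|^2=\exp\{-2P[W]-2P[\mu_{a,\delta}]\}$.
\end{itemize}
For the last factor, $|P[W]|\le\|W\|_{L^\infty}$. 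Also $\mu_{a,\delta}$ is a nonnegative measure of total mass $(a-\delta)+(a+\delta)=2a$, because $\sigma_\pm$ are probability measures; it is nonnegative since $|\delta|\le a/2$. On $|z|\le\frac12$ we have $P_z\le3$, so $P[\mu_{a,\delta}](z)\le 6a$. Hence
$$
|F'|^2\ge e^{-2\|W\|_{L^\infty}-12a}\ge e^{-1}
$$
in the small parameter region. Combining the three bounds gives $\int_{|z|\le1/2}\rho\,da\ge c f_0$ with $c>0$ absolute, and so $\ct\rho\ge c' f_0=:2c_0$.

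The one step needing care is the uniformity in the singular data: the lower bound must not depend on $\delta$ or on the unbounded behavior of $P[\Sigma]$ near $\mathbb T$. This is exactly why we restrict to the compact disk $\{|z|\le 1/2\}$ and use only the total mass of $\mu_{a,\delta}$ there. Once that is done, the $C\tau$ perturbation from Lemma~\ref{lem:weighted-trace-stability} is a uniform $L^\infty$ error, since the $C^\alpha$ norm dominates the sup norm. Shrinking $\tau$, $\lambda$, $a$ and $\|W\|_{C^\alpha}$ then gives \eqref{eq:uniform-log-lower-bound}.
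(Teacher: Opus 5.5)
Your proof is correct and follows essentially the same route as the paper. Both arguments use only the total mass of $\mu_{a,\delta}$ to bound $\rho$ from below on $\{|z|\le 1/2\}$, then use $P_z(\zeta)\ge 1/3$ there to get $\ct\rho\ge c_1$ on $\mathbb T$, and finally absorb the $O(\tau)$ difference between $\mathcal T_{\tau,F}$ and $\ct$ via \eqref{eq:T-variable-perturbation}; your explicit values $\mu_{a,\delta}(\mathbb T)=2a$ and $P[\mu_{a,\delta}]\le 6a$ on the half-disk simply make the paper's $Ca$ bound concrete.
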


\begin{proof}
On $|z|\le1/2$, the Poisson kernel is bounded above uniformly in the boundary
point.  Since $\mu_{a,\delta}(\mathbb T)\le C a$ for $|\delta|\le a/2$, we have
$$
    P[\mu_{a,\delta}](z)\le Ca
    \qquad\text{for } |z|\le1/2.
$$
Also $|P[W](z)|\le\|W\|_{L^\infty}$.  Hence
$$
    |F'(z)|^2
    =e^{-2P[W](z)-2P[\mu_{a,\delta}](z)}
    \ge c>0
    \qquad\text{for } |z|\le1/2.
$$
Moreover $A_{\tau,F}\ge1/2$ for $\tau$ small, and
Lemma~\ref{lem:weighted-semilinear-disk-estimate} gives
$f(\lambda^2v_{W,a,\delta,\tau,\lambda})\ge f_0/2$ for $\lambda$ small.  Thus
there exists $m_0>0$ such that
$$
    \rho_{W,a,\delta,\tau,\lambda}(z)\ge m_0
    \qquad\text{for } |z|\le1/2.
$$
For the unweighted operator,
$$
\ct\rho(\zeta)=\int_{\mathbb D}P_z(\zeta)\rho(z)\,da(z).
$$
If $|z|\le1/2$, then
$$
    P_z(\zeta)=\frac{1-|z|^2}{|\zeta-z|^2}\ge\frac13,
    \qquad \zeta\in\mathbb T.
$$
Therefore
$$
    \ct\rho(\zeta)
    \ge m_0\int_{|z|\le1/2}P_z(\zeta)\,da(z)
    \ge c_1>0.
$$
On the other hand, \eqref{eq:T-variable-perturbation} gives
$$
\|\mathcal T_{\tau,F}\rho-\ct\rho\|_{C^\alpha(\mathbb T)}
\le C\tau\|\rho\|_{L^\infty(\mathbb D)}.
$$
Taking $\tau$ sufficiently small gives
$\mathcal T_{\tau,F}\rho\ge c_1/2$.  This proves the lemma.
\end{proof}

Thus the logarithms in \eqref{eq:hd-residual-corrected} are well-defined and the map $X\mapsto\log X$ is uniformly Lipschitz, with uniformly bounded first and second derivatives, on the range of the positive functions appearing below.
 

\medskip
\paragraph{\it Linearization at the disk.}

We next compute the linearization in the outer variable $W$ at the disk.  This is the source for the high-mode invertibility.  Put
$$
    W=0,\qquad a=\delta=\tau=\lambda=0.
$$
Then $F(z)=z$, $A_{\tau,F}\equiv1$. Moreover, for a small outer perturbation $W$, since $\mathcal P_0$ subtracts the constant $\log(f_0)$, the residual is
$$
\Phi(W)=\mathcal P_0\left[\log \ct\bigl(f_0e^{-2P[W]}\bigr)-\log \bigl( e^{- W}\bigr)\right]=\mathcal P_0\left[\log \ct\bigl(e^{-2P[W]}\bigr)+W\right].
$$
In addition, we decompose this residual into its high-mode part and its sine-mode coefficient:
$$
\Phi_{\rm hi}:=\mathcal P_{\rm hi}\Phi,
 \qquad \Phi_1:=\pi_{\sin}\Phi .
$$
Thus $\Phi_{\rm hi}$ takes values in the high-mode subspace
$\mathcal P_{\rm hi}C^\alpha(\mathbb T)$, while $\Phi_1$ is a scalar.

Let $H\in C^\alpha(\mathbb T)$ have mean zero.  Since
$$
    e^{-2P[sH]}=1-2sP[H]+O(s^2)
$$
and $\ct(1)=1$, we get
$$
    \frac{d}{ds}\bigg|_{s=0}
    \log \ct\bigl(e^{-2P[sH]}\bigr)
    =-2\ct(P[H]).
$$
Therefore
\begin{equation}\label{eq:disk-linearization-W}
    D_W\Phi(0)[H]=H-2\ct(P[H]).
\end{equation}
We write
$  K:=\ct\circ P.$
Thus the disk linearization is
$$
    D_W\Phi(0)=I-2K.
$$

We now diagonalize $K$ on Fourier modes.  Let
$$
    \mathbf e_k(e^{it}):=e^{ikt},\qquad k\in\mathbb Z\setminus\{0\}.
$$
The Poisson extension of $e_k$ is
$$
    P[\mathbf e_k](re^{it})=r^{|k|}e^{ikt}.
$$
Using the definition of $\ct$ and the normalization $da=2r\,dr\,dm(e^{it})$, we find
$$
    K(\mathbf e_k)
    =\ct(P[\mathbf e_k])
    =\left(\int_0^1 2r^{2|k|+1}\,dr\right)\mathbf e_k
    =\frac1{|k|+1}\mathbf e_k .
$$
Consequently
\begin{equation}\label{eq:disk-multiplier}
    (I-2K)\mathbf e_k
    =\left(1-\frac2{|k|+1}\right)\mathbf e_k
    =\frac{|k|-1}{|k|+1}\mathbf e_k .
\end{equation}
Thus, the only zero eigenvalues occur for $|k|=1$.  In the reflected space
$\mathcal X_\alpha$, the $\cos t$ mode is forbidden by the symmetry
$W(-\overline\zeta)=W(\zeta)$, while $\sin t$ is allowed.  By definition of
$\mathcal Y_\alpha$, the $\sin t$ coefficient is also removed.  Therefore no first
Fourier mode remains in $\mathcal Y_\alpha$.

\medskip
\paragraph{\it High-mode invertibility.}
We next show that the operator $K$ is compact on the symmetry class in $C^\alpha(\mathbb T)$: Indeed,
$P$ maps $C^\alpha(\mathbb T)$ boundedly into bounded harmonic functions in the disk,
$\ct$ maps bounded functions into $C^\beta(\mathbb T)$ for every $\beta<1$ by
Lemma~\ref{lem:T-holder}, and the embedding $C^\beta\hookrightarrow C^\alpha$ is
compact whenever $\beta>\alpha$.  Hence the Fredholm alternative and the absence of
kernel on $\mathcal Y_\alpha$ imply that
\begin{equation}\label{eq:high-mode-invertibility}
    I-2K:\mathcal Y_\alpha\to \mathcal P_{\rm hi}C^\alpha(\mathbb T)
\end{equation}
is an isomorphism.  This implies the high-mode invertibility.

\medskip

\paragraph{\it Solving the high modes.}
Fix $\beta\in(\alpha,1)$ and put $\theta:=1-\frac{\alpha}{\beta}. $
We choose $\beta$ sufficiently close to $1$, and then choose $\gamma$ so that
\begin{equation}\label{eq:alpha-beta-gamma-condition}
    \alpha<\gamma<\theta.
\end{equation}
This is possible by \eqref{eq:alpha-choice-hd}.  In the perturbative estimates below
we keep
\begin{equation}\label{eq:delta-half-range}
    |\delta|\le\frac a2 .
\end{equation}

\begin{lem}\label{lem:hd-log-residual-perturbation}
For all sufficiently small parameters satisfying \eqref{eq:delta-half-range},
\begin{equation}\label{eq:high-mode-center-residual}
    \|\Phi_{\rm hi}(0,a,\delta,\tau,\lambda)\|_{C^\alpha(\mathbb T)}
    \le C\left(a^\theta+a^{-\gamma}|\delta|+\tau+\lambda^2\right).
\end{equation}
Moreover, uniformly for $W\in\mathcal Y_\alpha$ with
$\|W\|_{C^\alpha}\le1$,
\begin{equation}\label{eq:high-mode-parameter-increment}
\|\Phi_{\rm hi}(W,a,\delta,\tau,\lambda)
       -\Phi_{\rm hi}(W,a,0,0,0)\|_{C^\alpha(\mathbb T)}   
    \le C\left(a^{-\gamma}|\delta|+\tau+\lambda^2\right).
\end{equation}
Finally, for $H\in\mathcal Y_\alpha$ define the full mean-zero derivative error
$$
    \mathcal R_{W,a,\delta,\tau,\lambda}H
    :=D_W\Phi(W,a,\delta,\tau,\lambda)[H]-(I-2K)H .
$$
Then
\begin{equation}\label{eq:full-W-derivative-perturbation}
    \|\mathcal R_{W,a,\delta,\tau,\lambda}\|_{\mathcal L(\mathcal Y_\alpha,
       C^\alpha(\mathbb T))}
    \le C\left(\|W\|_{C^\alpha}+a^\theta+\tau+\lambda^2\right).
\end{equation}
Consequently,
\begin{equation}\label{eq:high-mode-derivative-perturbation}
\begin{split}
\left\|D_W\Phi_{\rm hi}(W,a,\delta,\tau,\lambda)
-(I-2K)\right\|_{\mathcal L(\mathcal Y_\alpha,
\mathcal P_{\rm hi}C^\alpha)}    
\le C\left(\|W\|_{C^\alpha}+a^\theta+\tau+\lambda^2\right).
\end{split}
\end{equation}
\end{lem}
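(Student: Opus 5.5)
The plan is to decompose $\Phi$ into its ingredients and estimate each factor's deviation from the disk configuration separately. We write
$$
\Phi=\mathcal P_0\bigl[\log\mathcal T_{\tau,F}(\rho)-\log (A_{\tau,F})^*-\log|(F')^*|\bigr],
$$
where $\log|(F')^*|=-W$ a.e. on $\mathbb T$, because the singular part contributes nothing to the boundary modulus by \eqref{eq:boundary-modulus-Hnu}. The density factors as
$$
\rho=A_{\tau,F}\,e^{-2P[W]}e^{-2P[\mu_{a,\delta}]}f(\lambda^2v).
$$
By Lemma~\ref{lem:uniform-log-lower-bound}, $\log$ is uniformly Lipschitz (with bounded second derivative) on the relevant range. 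It therefore suffices to bound the $C^\alpha$-differences of the arguments $\mathcal T_{\tau,F}(\rho)$.

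\textbf{Step 1: the center residual \eqref{eq:high-mode-center-residual}.} At $W=0$ we replace the quantities successively.

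First, replace $\mathcal T_{\tau,F}$ by $\ct$ at cost $C\tau$ via \eqref{eq:T-variable-perturbation}, and $(A_{\tau,F})^*$ by $1$ at cost $C\tau$ via \eqref{eq:A-tau-lipschitz-close}. The factor $A_{\tau,F}$ inside $\rho$ likewise costs $C\tau$ by Lemma~\ref{lem:T-holder}.

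Second, replace $f(\lambda^2 v)$ by $f_0$ at cost $C\lambda^2$, using \eqref{eq:hd-f-small-corrected} and Lemma~\ref{lem:T-holder}. The constant $f_0$ is then removed by $\mathcal P_0$.

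Third, write
$$
e^{-2P[\mu_{a,\delta}]}=e^{-2aP[\Sigma]}\,e^{-2\delta P[\nu]}.
$$
We have $|e^{-2\delta P[\nu]}-1|\le C|\delta||P[\nu]|e^{2|\delta|P[\Sigma]}$, and with $|\delta|\le a/2$ this gives
$$
\bigl|e^{-2P[\mu_{a,\delta}]}-e^{-2aP[\Sigma]}\bigr|\le C|\delta|\,|P[\nu]|\,e^{-aP[\Sigma]}.
$$
Lemma~\ref{lem:imbalance-weak-bound}, with $a$ replaced by $a/2$, yields the cost $Ca^{-\gamma}|\delta|$.

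Finally, $e^{-2aP[\Sigma]}=1-E_a$. Lemma~\ref{lem:singular-small-rate} gives $\|\ct(E_a)\|_{C^\alpha}\le Ca^\theta$, and $\ct(1)=1$. Since $\mathcal P_{\rm hi}$ is bounded on $C^\alpha$, this proves \eqref{eq:high-mode-center-residual}.

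\textbf{Step 2: the parameter increment \eqref{eq:high-mode-parameter-increment}.} We run the same chain at general $W$ with $\|W\|_{C^\alpha}\le 1$, keeping $e^{-2P[W]}$ as a bounded multiplier $Q$ throughout. This is legitimate because Lemmas~\ref{lem:imbalance-weak-bound} and~\ref{lem:weighted-trace-stability} both allow an arbitrary $L^\infty$ factor $Q$. The $a^\theta$ term does not appear, since the reference point $\Phi_{\rm hi}(W,a,0,0,0)$ keeps the same $a$.

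\textbf{Step 3: the derivative error \eqref{eq:full-W-derivative-perturbation}.} Differentiation in $W$ gives
$$
D_W\Phi[H]=\mathcal P_0\Bigl[\frac{D_W\mathcal T_{\tau,F}(\rho)[H]}{\mathcal T_{\tau,F}(\rho)}-\frac{D_WA^*[H]}{A^*}+H\Bigr].
$$
The main term of $D_W\mathcal T_{\tau,F}(\rho)[H]$ is $\mathcal T_{\tau,F}(-2P[H]\rho)$, with $\|P[H]\|_{L^\infty}\le\|H\|_{C^\alpha}$. The remaining pieces of $D_W\mathcal T_{\tau,F}(\rho)[H]$ and the $A^*$ term are $O(\tau+\lambda^2)\|H\|$:
\begin{itemize}
\item the derivative hitting the coefficient $A$ inside $\mathcal T_{\tau,F}$ is bounded via \eqref{eq:T-coefficient-Lipschitz} and \eqref{eq:A-tau-W-derivative};
\item the derivative hitting $A$ inside $\rho$ is bounded via \eqref{eq:A-tau-W-derivative};
\item the derivative hitting $f$ is bounded via \eqref{eq:D-W-f-lambda2-bound}.
\end{itemize}
It then remains to compare $-2\mathcal T_{\tau,F}(P[H]\rho)/\mathcal T_{\tau,F}(\rho)$ with $-2K H$. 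Replacing $\mathcal T_{\tau,F}$, $A$ and $f$ costs $O(\tau+\lambda^2)$, which leaves
$$
-2\,\frac{\ct\bigl(P[H]e^{-2P[W]}(1-E)\bigr)}{\ct\bigl(e^{-2P[W]}(1-E)\bigr)},
\qquad E:=1-e^{-2P[\mu_{a,\delta}]}.
$$
Because $0\le E\le 2P[\mu_{a,\delta}]\le 3aP[\Sigma]$, the proof of Lemma~\ref{lem:singular-small-rate} applies and controls the $E$-terms by $a^\theta$. Note that $\delta$ enters here only through $E$, which is why no $a^{-\gamma}|\delta|$ term appears. Differences involving $e^{-2P[W]}-1$ are bounded by $C\|W\|_{C^\alpha}$ through Lemma~\ref{lem:T-holder}, and the quotient is handled by the lower bound of Lemma~\ref{lem:uniform-log-lower-bound} together with the algebra property of $C^\alpha$. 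Applying $\mathcal P_{\rm hi}$ gives \eqref{eq:high-mode-derivative-perturbation}.

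\textbf{Main obstacle.} The delicate point is the $\delta$-imbalance term. A naive bound $|P[\nu]|\le P[\Sigma]$ gives only $L^\infty$-unbounded densities. The damping factor $e^{-aP[\Sigma]}$ has to be kept, at the price of the factor $a^{-\gamma}$, which is exactly what Lemma~\ref{lem:imbalance-weak-bound} is designed to provide.
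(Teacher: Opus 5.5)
Your proposal is correct and follows essentially the same route as the paper. You factor the density as $A\,e^{-2P[W]}e^{-2P[\mu_{a,\delta}]}f(\lambda^2v)$ and swap the factors one at a time, using \eqref{eq:T-variable-perturbation}, \eqref{eq:hd-f-small-corrected}, the proof of Lemma~\ref{lem:singular-small-rate} with $0\le 1-e^{-2P[\mu_{a,\delta}]}\le 3aP[\Sigma]$, and Lemma~\ref{lem:imbalance-weak-bound} at $a/2$ for the $\delta$-increment, and then control the logarithm and the quotient by the uniform lower bound. The only differences are cosmetic: you get the $\delta$-increment from the pointwise bound $|e^{-2\delta P[\nu]}-1|\le 2|\delta|\,|P[\nu]|\,e^{2|\delta|P[\Sigma]}$ instead of the paper's fundamental-theorem-of-calculus formula in $\delta$, and you compare the quotient directly instead of estimating $B-f_0$ and $D_WB[H]+2f_0KH$ separately.
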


\begin{proof}
Throughout the proof, for simplification, write
$$
    \mu=\mu_{a,\delta},\qquad  A=A_{\tau,F},\qquad
    G=e^{-2P[W]-2P[\mu]},\qquad M=f(\lambda^2v_{W,a,\delta,\tau,\lambda}),
$$
and set
$$
    R:=AGM,  \qquad  B:=\mathcal T_{\tau,F}R.
$$
Then
\begin{equation}\label{eq:Phi-through-B}
    \Phi(W,a,\delta,\tau,\lambda)
    =\mathcal P_0\left[\log B-\log A^*+W\right],
\end{equation}
because the singular inner factor has boundary modulus one a.e. and
$|(F')^*|=e^{-W}$ on $\mathbb T$.
By Lemma~\ref{lem:uniform-log-lower-bound}, $B\ge c_0>0$ in the whole small
parameter region.  Thus the logarithm is uniformly Lipschitz and has uniformly
bounded first derivative on the range considered here.

We first record a full mean-zero perturbation estimate which is stronger than the
one needed for the high modes.  Since $|\delta|\le a/2$,
$$
    0\le \mu_{a,\delta}\le {\frac{3a} 2}\Sigma .
$$
Hence
$$
    0\le 1-e^{-2P[\mu_{a,\delta}]}
    \le 1-e^{-3aP[\Sigma]}.
$$
Lemma~\ref{lem:singular-small-rate}, applied with $3a/2$ in place of $a$, gives
for every bounded $Q$,
\begin{equation}\label{eq:mu-small-balanced-use}
    \left\|\ct\left((1-e^{-2P[\mu_{a,\delta}]})Q\right)\right\|_{C^\alpha}
    \le Ca^\theta\|Q\|_{L^\infty(\mathbb D)} .
\end{equation}
Together with \eqref{eq:T-variable-perturbation},
\eqref{eq:hd-f-small-corrected}, and the pointwise bound
$|e^{-2P[W]}-1|\le C\|W\|_{C^\alpha}$ for $\|W\|_{C^\alpha}\le1$, this yields
\begin{equation}\label{eq:B-minus-f0}
    \|B-f_0\|_{C^\alpha(\mathbb T)}
    \le C\left(\|W\|_{C^\alpha}+a^\theta+\tau+\lambda^2\right).
\end{equation}
Indeed, write
$$
 B-f_0=(\mathcal T_{\tau,F}-\ct)R+
        \ct(R-f_0),
$$
and decompose $R-f_0$ into the four contributions coming from
$A-1$, $e^{-2P[W]}-1$, $e^{-2P[\mu]}-1$, and
$f(\lambda^2v)-f_0$.

The same argument, but with the fundamental theorem of calculus in the imbalance
parameter, gives the sharper increment estimate in the $\delta$-direction.  More
precisely,
$$
 e^{-2P[\mu_{a,\delta}]}-e^{-2aP[\Sigma]}
 =\int_0^\delta -2P[\nu]e^{-2P[a\Sigma+s\nu]}\,ds .
$$
If $|s|\le |\delta|\le a/2$, then
$a\Sigma+s\nu\ge (a/2)\Sigma$ and $|P[\nu]|\le P[\Sigma]$.  Hence
Lemma~\ref{lem:imbalance-weak-bound}, applied with $a/2$ in place of $a$, gives
\begin{equation}\label{eq:delta-increment-bound}
    \left\|\ct\left(
    \bigl(e^{-2P[\mu_{a,\delta}]}-e^{-2aP[\Sigma]}\bigr)Q\right)
    \right\|_{C^\alpha}
    \le Ca^{-\gamma}|\delta|\|Q\|_{L^\infty(\mathbb D)} .
\end{equation}
Combining this with \eqref{eq:T-variable-perturbation} and
\eqref{eq:hd-f-small-corrected}, and then using the uniform lower bound for $B$,
proves \eqref{eq:high-mode-parameter-increment}.  Taking $W=0$ in the same
estimate and adding the balanced singular estimate
\eqref{eq:mu-small-balanced-use} proves \eqref{eq:high-mode-center-residual}.

\medskip

It remains to prove the derivative estimate.  We apply the following direct consequence of Lemma~\ref{lem:weighted-trace-stability} and \eqref{eq:A-tau-W-derivative}: For every bounded density $\rho$,
\begin{equation}\label{eq:trace-W-derivative-bound}
    \|D_W(\mathcal T_{\tau,F})[\rho][H]\|_{C^\alpha(\mathbb T)}\le C\tau\|\rho\|_{L^\infty(\mathbb D)}\|H\|_{C^\alpha(\mathbb T)} .
\end{equation}
For completeness we recall the proof here:  If $\psi$ solves
$$-\operatorname{div}(A\nabla\psi)=\rho \ \text{ and } \ \dot A=D_WA_{\tau,F}[H],$$
then the coefficient variation $\dot\psi$ solves
$$
    -\operatorname{div}(A\nabla\dot\psi)
    =\operatorname{div}(\dot A\nabla\psi),
    \qquad \dot\psi|_{\mathbb T}=0.
$$
The $W^{2,q}$ estimate used in Lemma~\ref{lem:weighted-trace-stability}, together
with \eqref{eq:A-tau-W-derivative}, gives
$\|\dot\psi\|_{W^{2,q}}\le C\tau\|\rho\|_{L^\infty}\|H\|_{C^\alpha}$, and hence
\eqref{eq:trace-W-derivative-bound} follows from
$D_W(\mathcal T_{\tau,F})[\rho][H]
=-2\dot A\partial_\nu\psi|_{\mathbb T}-2A\partial_\nu\dot\psi|_{\mathbb T}$.

We next compare $D_WB[H]$ with the disk derivative $-2f_0KH$.  Since
$$
    D_WR[H] =-2P[H]R+(D_WA[H])GM +AG\lambda^2 f'(\lambda^2v)D_Wv[H],
$$
Lemma~\ref{lem:weighted-semilinear-disk-estimate},
\eqref{eq:A-tau-W-derivative}, and \eqref{eq:trace-W-derivative-bound} give
\begin{equation}\label{eq:DB-main-comparison}
    \|D_WB[H]+2f_0KH\|_{C^\alpha(\mathbb T)} \le C\left(\|W\|_{C^\alpha}+a^\theta+\tau+\lambda^2\right)
       \|H\|_{C^\alpha(\mathbb T)} .
\end{equation}
Indeed, the main term is
$\ct(-2f_0P[H])=-2f_0KH$.  The error from replacing
$\mathcal T_{\tau,F}$ by $\ct$ is $O(\tau\|H\|_{C^\alpha})$.  The error from
$R-f_0$ is estimated by the same decomposition as in \eqref{eq:B-minus-f0}, with
$P[H]$ absorbed into the bounded factor $Q$.  The remaining two differentiated
terms are bounded respectively by $C\tau\|H\|_{C^\alpha}$ and
$C\lambda^2\|H\|_{C^\alpha}$.

Using \eqref{eq:B-minus-f0}, \eqref{eq:DB-main-comparison}, and the lower bound
$B\ge c_0$, we obtain
\begin{equation}\label{eq:DlogB-comparison}
    \left\|{\frac{D_WB[H]}  B}+2KH\right\|_{C^\alpha(\mathbb T)} \le \left(\|W\|_{C^\alpha}+a^\theta+\tau+\lambda^2\right)
       \|H\|_{C^\alpha(\mathbb T)} .
\end{equation}
Finally,
$$
    \|D_W\log A^*[H]\|_{C^\alpha(\mathbb T)}  \le C\tau\|H\|_{C^\alpha(\mathbb T)}
$$
by \eqref{eq:A-tau-W-derivative}.  Differentiating
\eqref{eq:Phi-through-B} therefore gives
$$
D_W\Phi(W,a,\delta,\tau,\lambda)[H]-(I-2K)H
 =\mathcal P_0\left[ \frac{D_WB[H]} {B}+2KH-D_W\log A^*[H]
   \right].
$$
Together with \eqref{eq:DlogB-comparison}, this proves
\eqref{eq:full-W-derivative-perturbation}.  Since $(I-2K)H$ has no mean and no first sine mode when $H\in\mathcal Y_\alpha$, applying $\mathcal P_{\rm hi}$ gives \eqref{eq:high-mode-derivative-perturbation}.
\end{proof}

Let
$$
    \mathfrak L:=(I-2K)|_{\mathcal Y_\alpha}.
$$
By \eqref{eq:high-mode-invertibility}, $\mathfrak L$ is an isomorphism from
$\mathcal Y_\alpha$ onto $\mathcal P_{\rm hi}C^\alpha(\mathbb T)$.  The contraction
mapping theorem applied to
$$
W\mapsto -\mathfrak L^{-1}\left(\Phi_{\rm hi}(0,a,\delta,\tau,\lambda)
+\Phi_{\rm hi}(W,a,\delta,\tau,\lambda)-\Phi_{\rm hi}(0,a,\delta,\tau,\lambda)
-\mathfrak L W\right)
$$
and Lemma~\ref{lem:hd-log-residual-perturbation} imply that, when
$$
    a^\theta+a^{1-\gamma}+\tau+\lambda^2
$$
is sufficiently small, the high-mode equation
$$
    \Phi_{\rm hi}(W,a,\delta,\tau,\lambda)=0
$$
has a unique solution
$W=W(a,\delta,\tau,\lambda)\in\mathcal Y_\alpha$ in a fixed small ball around the
origin.  Moreover,
\begin{equation}\label{eq:W-highmode-size}
    \|W(a,\delta,\tau,\lambda)\|_{C^\alpha}
    \le C\left(a^\theta+a^{-\gamma}|\delta|+\tau+\lambda^2\right).
\end{equation}
If $W_a:=W(a,0,0,0)$, then
\begin{equation}\label{eq:W-minus-Wa-size}
    \|W(a,\delta,\tau,\lambda)-W_a\|_{C^\alpha}
    \le C\left(a^{-\gamma}|\delta|+\tau+\lambda^2\right).
\end{equation}

We need the following high-to-sine coupling estimate:
\begin{equation}\label{eq:high-to-sine-coupling}
    \|D_W\Phi_1(W_a,a,0,0,0)\|_{\mathcal L(\mathcal Y_\alpha,\mathbb R)}
    \le Ca^\theta .
\end{equation}
Indeed, at the disk one has
$$
\pi_{\sin}(I-2K)H=0, \qquad H\in\mathcal Y_\alpha,
$$
because $H$ has no first sine mode and $K$ is diagonal on Fourier modes.
Therefore
$$
D_W\Phi_1(W_a,a,0,0,0)[H]
=\pi_{\sin}\left( D_W\Phi(W_a,a,0,0,0)[H]-(I-2K)H\right).
$$
The full derivative estimate \eqref{eq:full-W-derivative-perturbation}, together with $\|W_a\|_{C^\alpha}\le Ca^\theta$, gives \eqref{eq:high-to-sine-coupling}.

\medskip
\paragraph{\it Reduction to the sine mode.}
We now reduce the remaining equation to the scalar sine mode.  Recall that
$$
    W_a:=W(a,0,0,0),
    \qquad \|W_a\|_{C^\alpha}\le Ca^\theta .
$$
For the balanced parameters $\delta=\tau=\lambda=0$, the exact antipodal symmetry
of $\Sigma$ implies that the balanced problem has no sine-mode residual.  Indeed, let
$$
    (\mathscr A W)(\zeta):=W(-\zeta),
    \qquad \zeta\in\mathbb T.
$$
Then $\mathscr A$ preserves $\mathcal Y_\alpha$.  Since $A_\#\Sigma=\Sigma$, the
balanced residual is equivariant:
$$
    \Phi(\mathscr A W,a,0,0,0)=\mathscr A\Phi(W,a,0,0,0).
$$
Consequently
$$
    \Phi_{\rm hi}(\mathscr A W,a,0,0,0)
    =\mathscr A\Phi_{\rm hi}(W,a,0,0,0).
$$
Thus, if $W_a$ solves the balanced high-mode equation, so does $\mathscr A W_a$.  As both of them lie in the small ball of the high-mode fixed point, the uniqueness of the solution yields 
$$
 \mathscr A W_a=W_a.
$$
Therefore $\Phi(W_a,a,0,0,0)$ is antipodally even.  Its mean-zero part is already
projected, its high-mode part vanishes by the definition of $W_a$, and its first
sine coefficient vanishes since  $\sin(t+\pi)=-\sin t$.  Hence
\begin{equation}\label{eq:balanced-full-residual-zero}
    \Phi(W_a,a,0,0,0)=0.
\end{equation}

Let
$$
A_a:=D_W\Phi_{\rm hi}(W_a,a,0,0,0)\colon \mathcal Y_\alpha\to \mathcal P_{\rm hi}C^\alpha(\mathbb T).
$$
By \eqref{eq:high-mode-derivative-perturbation}, $A_a$ is invertible for small $a$,
and $\|A_a^{-1}\|$ remains bounded as $a\to 0$.  
For $|\delta|\le a/2$, the map
$$
    W=W(a,\delta,\tau,\lambda)
$$
constructed above is the unique small solution of
$$
    \Phi_{\rm hi}(W(a,\delta,\tau,\lambda),a,\delta,\tau,\lambda)=0.
$$
Moreover, \eqref{eq:W-minus-Wa-size} gives the quantitative estimate
$$
    \|W(a,\delta,\tau,\lambda)-W_a\|_{C^\alpha}
    \le C\left(a^{-\gamma}|\delta|+\tau+\lambda^2\right).
$$
Define the scalar reduced residual in the variable $s=\lambda^2$ by
\begin{equation}\label{eq:scalar-reduced-residual}
\Psi(a,\delta,\tau,s):=\Phi_1(W(a,\delta,\tau,\sqrt{s}),a,\delta,\tau,\sqrt{s}),    \qquad s\ge0.
\end{equation}
The map is $C^1$ in $(\delta,\tau,s)$ in the small parameter region.  In the computations below all first derivatives are evaluated at
$(W,a,\delta,\tau,s)=(W_a,a,0,0,0)$.

Now let us compute the first derivatives of $\Psi$ at $(\delta,\tau,\lambda)=(0,0,0)$, with
$a>0$ fixed.  Differentiating the high-mode equation with respect to $\delta$ gives
$$
    D_\delta W(a,0,0,0)
    =-A_a^{-1}D_\delta\Phi_{\rm hi}(W_a,a,0,0,0).
$$
Therefore
\begin{equation}\label{eq:schur-delta}
    \partial_\delta\Psi(a,0,0,0)
    =D_\delta\Phi_1
    -D_W\Phi_1\,A_a^{-1}D_\delta\Phi_{\rm hi},
\end{equation}
where all derivatives on the right are evaluated at $(W,a,\delta,\tau,\lambda)=(W_a,a,0,0,0)$. Observe that, the first term on the right-hand side of \eqref{eq:schur-delta} is the direct effect of changing $\dz$ on the sine mode, and the second term is an indirect high-mode correction: Changing $\delta$
also creates high-mode residuals, and one needs the compensating change in $W$ to feed back into the scalar sine equation.

\medskip
\paragraph{\it The imbalance derivative.}
Recall that each $\sz_\pm$ is supported in an arc $I_\pm$ of length $2\eta$. In what follows, the notation $O(\eta)$ refers to errors that tend to zero
as the supporting arcs $I_\pm$ shrink to $\pm i$.  Once $\eta$ and the
measures $\sigma_\pm$ are fixed, all constants in the subsequent estimates may depend on this choice.

We claim that
\begin{equation}\label{eq:delta-schur-coeff}
    \partial_\delta\Psi(a,0,0,0)=4+O(\eta)+o_a(1),
\end{equation}
where $o_a(1)\to0$ as $a\to0$, after the arc-size $\eta$ has been fixed.
Put
$$
    G_a:=\exp\{-2P[W_a]-2aP[\Sigma]\}.
$$
At $(W,\delta,\tau,\lambda)=(W_a,0,0,0)$, the direct $\delta$-derivative of the logarithmic residual is
\begin{equation}\label{eq:direct-delta-derivative}
    D_\delta\Phi_1
    =
    \pi_{\sin}\left(
        \frac{\ct(-2P[\nu]G_a)}{\ct(G_a)}
      \right).
\end{equation}
Here the boundary denominator $|(F')^*|=e^{-W_a}$ has no $\delta$-derivative, since  the singular factor has boundary modulus one almost everywhere.

Since $\|W_a\|_{C^\alpha}=O(a^\theta)$, Lemma~\ref{lem:singular-small-rate} gives
$$
    \|\ct(G_a)-1\|_{C^\alpha}=o_a(1).
$$
Moreover, using $|P[\nu]|\le P[\Sigma]$, Fubini, and dominated convergence in the scalar pairing with $P[\sin t](z)=r\sin t$, we have
$$
    \pi_{\sin}\ct(P[\nu]G_a)
    =
    \pi_{\sin}\ct(P[\nu])+o_a(1).
$$
Therefore
\begin{equation}\label{eq:direct-delta-leading}
    D_\delta\Phi_1
    =
    -2\pi_{\sin}\ct(P[\nu])+o_a(1).
\end{equation}
Finally,
$$
\begin{aligned}
\pi_{\sin}\ct(P[\nu])
&=2\int_{\mathbb T}\ct(P[\nu])(e^{it})\sin t\,dm(e^{it})  \\
&=2\int_{\mathbb D}P[\nu](z)P[\sin t](z)\,da(z)  \\
&=2\int_{\mathbb T}\ct(P[\sin t])\,d\nu
 =\int_{\mathbb T}\sin t\,d\nu(t).
\end{aligned}
$$
Since $\sigma_+$ is a probability measure supported in an $\eta$-arc around $i$ and $\sigma_-=A_\#\sigma_+$ is supported around $-i$,
$$
    \int_{\mathbb T}\sin t\,d\nu(t)
    =
    \int_{\mathbb T}\sin t\,d\sigma_-(t)
      -
    \int_{\mathbb T}\sin t\,d\sigma_+(t)
    =
    -2+O(\eta).
$$
Thus the direct term in \eqref{eq:schur-delta} equals
$$
    D_\delta\Phi_1=4+O(\eta)+o_a(1).
$$

It remains to bound the indirect high-mode correction in \eqref{eq:schur-delta}.
By \eqref{eq:high-to-sine-coupling},
$$
    \|D_W\Phi_1(W_a,a,0,0,0)\|_{\mathcal L(\mathcal Y_\alpha,\mathbb R)} \le Ca^\theta .
$$
On the other hand, Lemma~\ref{lem:imbalance-weak-bound} gives
$$
    \|D_\delta\Phi_{\rm hi}(W_a,a,0,0,0)\|_{C^\alpha}
    \le Ca^{-\gamma}.
$$
Since $\|A_a^{-1}\|$ remains uniformly bounded and $\gamma<\theta$,
$$
 \left|D_W\Phi_1A_a^{-1}D_\delta\Phi_{\rm hi}\right|
\le C a^{\theta-\gamma}=o_a(1).
$$
This proves \eqref{eq:delta-schur-coeff}.

\medskip
\paragraph{\it The axial-weight derivative and the choice of $\delta$.}
Similarly,
\begin{equation}\label{eq:schur-tau}
    \partial_\tau\Psi(a,0,0,0)
    =
    D_\tau\Phi_1-D_W\Phi_1A_a^{-1}D_\tau\Phi_{\rm hi}.
\end{equation}
The second term in \eqref{eq:schur-tau} is $o_a(1)$, since $D_\tau\Phi_{\rm hi}$ is uniformly bounded and
$$
    \|D_W\Phi_1\|=O(a^\theta).
$$

It remains to compute the direct disk contribution.  Write $y=\operatorname{Im}z$.  At the disk,
$$
    A_{\tau,z}=(1+\tau y)^p=1+\tau py+O(\tau^2),
    \qquad
    (A_{\tau,z})^*=1+\tau p\sin t+O(\tau^2).
$$
Thus
\begin{equation}\label{eq:tau-direct-disk}
\left.\frac{d}{d\tau}\right|_{\tau=0}
\left[
\log\mathcal T_{\tau,z}(A_{\tau,z})
-
\log (A_{\tau,z})^*
\right]
=
\ct(py)+p\dot{\mathcal T}_y(1)-p\sin t,
\end{equation}
where $\dot{\mathcal T}_y$ denotes the first variation of the trace operator in the coefficient direction $y$.

We now compute $\dot{\mathcal T}_y(1)$.  Let $A_\varepsilon=1+\varepsilon y$, and let $\psi_\varepsilon$ solve
$$
    -\operatorname{div}(A_\varepsilon\nabla\psi_\varepsilon)=1
    \quad\hbox{in }\mathbb D,
    \qquad
    \psi_\varepsilon=0\quad\hbox{on }\mathbb T.
$$
Write
$$
\psi_\varepsilon=\psi_0+\varepsilon\psi_1+o(\varepsilon).
$$
Then
$\psi_0=\frac{1-r^2}{4},$
and the first variation satisfies
$$
    -\Delta\psi_1
    =  \operatorname{div}(y\nabla\psi_0)
    =  -\frac32 y,
    \qquad  \psi_1|_{\mathbb T}=0.
$$
Since $y=r\sin t$, we obtain
$$
    \psi_1=\frac{3}{16}(r^3-r)\sin t,
    \qquad  \partial_\nu\psi_1|_{\mathbb T}=\frac38\sin t.
$$
Therefore, using
$$
    \mathcal T_\varepsilon(1)
    = -2A_\varepsilon\partial_\nu\psi_\varepsilon|_{\mathbb T},
$$
we get
\begin{equation*}\label{eq:T-y-variation}
    \dot{\mathcal T}_y(1)= -2\left(y\partial_\nu\psi_0+\partial_\nu\psi_1\right)\big|_{\mathbb T}
    =-2\left(-\frac12\sin t+\frac38\sin t\right)
    =\frac14\sin t.
\end{equation*}
Also
$$
    \ct(y)=\ct(r\sin t)=\frac12\sin t.
$$
Hence the right-hand side of \eqref{eq:tau-direct-disk} is
$$
    \frac p2\sin t+\frac p4\sin t-p\sin t
    = -\frac p4\sin t.
$$
Taking the scalar $\sin t$-coefficient gives
\begin{equation}\label{eq:tau-schur-coeff}
    \partial_\tau\Psi(a,0,0,0)
    =  -\frac p4+o_a(1).
\end{equation}
Combining \eqref{eq:delta-schur-coeff} with \eqref{eq:tau-schur-coeff}, Taylor
expansion in $(\delta,\tau,s)$ gives
\begin{equation}\label{eq:reduced-sine-expansion}
    \Psi(a,\delta,\tau,s)
    =  \bigl(4+O(\eta)+o_a(1)\bigr)\delta
    -  \left(\frac p4+o_a(1)\right)\tau
    +  O_{a,\eta}(\delta^2+\tau^2+|\delta|\tau+s).
\end{equation}
Choose $\eta>0$ sufficiently small and then choose $a>0$ sufficiently small.  Then
$$
    \partial_\delta\Psi(a,0,0,0)\ge2.
$$
The implicit function theorem applied to the variables $(\delta,\tau,s)$ gives
$s_0(a)>0$, $\tau_0(a)>0$, and a unique small function
$ \delta=\delta_{\tau,s}$
such that
$$
    \Psi(a,\delta_{\tau,s},\tau,s)=0
$$
for $0\le\tau\le\tau_0(a)$ and $0\le s\le s_0(a)$.  Moreover,
\begin{equation}\label{eq:delta-solved-size}
    \delta_{\tau,s}
    =\frac{\frac p4+o_a(1)}{4+O(\eta)+o_a(1)}\,\tau
    +O_{a,\eta}(\tau^2+s).
\end{equation}
Equivalently,
$$
    \delta_{\tau,s}
    =\frac p{16}\tau+O(\eta\tau)+o_a(\tau)+O_{a,\eta}(\tau^2+s).
$$
Now choose $0<\tau\ll a$ and $0<s=\lambda^2\ll a$ so small that $|\delta_{\tau,s}|<\frac a 2.$

Set $\delta_{\tau,\lambda}:=\delta_{\tau,\lambda^2}$ for simplicity. 
Then $\mu_{a,\delta_{\tau,\lambda}}$ is a nonnegative nonzero singular measure,
since  $|\delta_{\tau,\lambda}|<a/2<a$.
Together with the high-mode equation, this gives 
\begin{equation}\label{eq:hd-weighted-boundary-equation-corrected}
    \mathcal T_{\tau,F}(\rho_{W,a,\delta_{\tau,\lambda},\tau,\lambda})
    =C_{\tau,\lambda}(A_{\tau,F})^*|(F'{})^*|
    \quad\text{on }\mathbb T
\end{equation}
for some $C_{\tau,\lambda}>0$.  
Since $ |\delta_{\tau,\lambda}|<a/2<a $, the singular measure
$\mu_{a,\delta_{\tau,\lambda}}$ is nonnegative and nonzero.

\subsection{Completion of the higher-dimensional construction}
In this subsection, we pass the construction from the plane to the higher dimension,

\medskip
\paragraph{\it Fixing the solved disk data}
We now pass from the weighted disk equation to an axially symmetric domain in
$\mathbb R^n$.  We keep the parameters $a$, $\tau$, and $\lambda$ fixed as
chosen above, and we write
$$
\delta_*:=\delta_{\tau,\lambda},
\qquad W_*:=W(a,\delta_*,\tau,\lambda).
$$
Let $F:=F_{W_*,a,\delta_*}$ be the conformal map defined  via \eqref{eq:hd-F-prime-corrected} as
$$
F'(z)=\exp\{-\mathcal H[W_*\,dm+\mu_{a,\delta_*}](z)\}, \qquad F(0)=0.
$$
Recall that
$$
A_{\tau,F}(z):=(1+\tau\,\operatorname{Im}F(z))^p,
$$
and also denote by
$  u_{\mathbb D}:=v_{W_*,a,\delta_*,\tau,\lambda}$
the disk-side semilinear solution obtained in the previous subsection. In particular, according to \eqref{eq:hd-disk-semilinear-corrected} 
$u_{\mathbb D}$ solves
$$
-\operatorname{div}(A_{\tau,F}\nabla u_{\mathbb D})
=A_{\tau,F}|F'|^2 f(\lambda^2u_{\mathbb D})    \quad\text{in }\mathbb D,    
\qquad u_{\mathbb D}=0\quad\text{on }\mathbb T .
$$

\medskip
\paragraph{\it The equation on the meridian image}
Set
$\Omega^{\rm mer}:=F(\mathbb D)\subset\mathbb R^2.$
We write a point of $\mathbb R^2$ as $(x,y)$.  Define the corresponding solution on the meridian image by
$$
 u_{\rm mer}(F(z)):=u_{\mathbb D}(z), \qquad z\in\mathbb D .
$$
Since $F$ is conformal, the equation for $u_{\mathbb D}$ is equivalent to
\begin{equation}\label{eq:umer} 
  -\operatorname{div}_{x,y}
 \left((1+\tau y)^p\nabla u_{\rm mer}\right)
 = (1+\tau y)^p f(\lambda^2u_{\rm mer}) \quad\text{in }\Omega^{\rm mer},   
\end{equation}
with zero boundary trace on $\partial\Omega^{\rm mer}$.

\medskip
\paragraph{\it The weighted quadrature identity on the meridian image}
The boundary equation \eqref{eq:hd-weighted-boundary-equation-corrected} now
gives a weighted quadrature identity on $\Omega^{\rm mer}$.  Indeed, let
$V$ solve
$$
    \operatorname{div}_{x,y}\left((1+\tau y)^p\nabla V\right)=0
$$
in $\Omega^{\rm mer}$, with boundary trace in the disk trace class.  Then
$V\circ F$ is $A_{\tau,F}$-harmonic in $\mathbb D$.  Then using the definition of $\mathcal T_{\tau,F}$ and \eqref{eq:hd-weighted-boundary-equation-corrected}, we
obtain
$$
 \int_{\mathbb D} (V\circ F)\, A_{\tau,F}|F'|^2 f(\lambda^2u_{\mathbb D})\,da = C_{\tau,\lambda} \int_{\mathbb T}
(V\circ F)^* (A_{\tau,F})^*|(F^{\prime })^*|\,dm .
$$
Changing variables by $F$, and recalling the normalizations
$da=dA/\pi$ and $ds=2\pi |F^{\prime *}|\,dm$, this becomes
\begin{equation}\label{eq:weighted-identity}
\int_{\Omega^{\rm mer}} V(x,y)(1+\tau y)^p f(\lambda^2u_{\rm mer}(x,y))\,dx\,dy
=c_{\rm mer} \int_{\partial\Omega^{\rm mer}} V(x,y)(1+\tau y)^p\,ds,
\end{equation}
where
$ c_{\rm mer}:=\frac{C_{\tau,\lambda}}{2}>0.$

\medskip
\paragraph{\it Translation away from the axis}
We now translate the meridian domain far from the axis. Set $ R:=\tau^{-1}$
and define
$$
D:=\{(x,r):(x,r-R)\in\Omega^{\rm mer}\}.
$$
Since $\tau>0$ is small and $\Omega^{\rm mer}$ is bounded, we have
$$
    D\subset\subset\{(x,r):r>0\}.
$$
Define the translated meridian solution
$$
 u_D(x,r):=u_{\rm mer}(x,r-R), \qquad (x,r)\in D .
$$
If $y=r-R$, then
$ r^p=R^p(1+\tau y)^p.$
Consequently, \eqref{eq:umer} becomes
$$
-\operatorname{div}_{x,r}(r^p\nabla u_D)
= r^p f(\lambda^2u_D) \quad\text{in }D.
$$
Equivalently,
$$
-L_pu_D=f(\lambda^2u_D) \quad\text{in }D,\qquad u_D=0\quad\text{on }\partial D,
$$
where
$$
    L_p=\partial_{xx}+\partial_{rr}+\frac{p}{r}\partial_r.
$$

The weighted quadrature identity \eqref{eq:weighted-identity} also translates.
Let $\psi$ be any admissible variational $L_p$-harmonic replacement in $D$ with
boundary trace in the corresponding Sobolev trace class, and set
$V(x,y):=\psi(x,R+y)$.  Then
$$
\operatorname{div}_{x,y}\left((1+\tau y)^p\nabla V\right)=0
\quad\text{in }\Omega^{\rm mer}.
$$
Applying \eqref{eq:weighted-identity} to this replacement and multiplying both
sides by $R^p$, we obtain
\begin{equation}\label{eq:hd-meridian-moment-variational}
 \int_D f(\lambda^2u_D)\psi\,r^p\,dx\,dr
=c_{\rm mer}\int_{\partial D}\psi\,r^p\,ds .
\end{equation}
Thus \eqref{eq:meridian-weighted-moment} holds with
$q=f(\lambda^2u_D)$ and $c=c_{\rm mer}$ for all variational $L_p$-harmonic
replacements in the required trace class.

\medskip
\paragraph{\it Rotation to an axially symmetric domain}
We next rotate $D$ around the $x$-axis.  Define
$$
\Omega^{\rm ref} := \{(x,z)\in\mathbb R\times\mathbb R^{n-1}:(x,|z|)\in D\}.
$$
Define the reference axially symmetric solution
$$
U^{\rm ref}(x,z):=u_D(x,|z|), \qquad (x,z)\in\Omega^{\rm ref}.
$$
Since $D\subset\subset\{r>0\}$, the rotation is regular.  The equation
$-L_pu_D=f(\lambda^2u_D)$ becomes
$$
-\Delta U^{\rm ref}
=f(\lambda^2U^{\rm ref}) \quad\text{in }\Omega^{\rm ref},
$$
with zero Sobolev trace on $\partial\Omega^{\rm ref}$.

By Lemma~\ref{lem:rotated-moment-to-weak-law}, applied to
$q=f(\lambda^2u_D)$ and the moment identity
\eqref{eq:hd-meridian-moment-variational}, the zero extension of
$U^{\rm ref}$ satisfies
\begin{equation}\label{eq:hd-ref-weak-law}
\Delta U^{\rm ref}
=c_{\rm mer}\,\mathcal H^{n-1}\!\lfloor_{\partial^*\Omega^{\rm ref}}
-f(\lambda^2U^{\rm ref})\mathbf 1_{\Omega^{\rm ref}}\,dX
\qquad\text{in }\mathcal D'(\mathbb R^n).
\end{equation}

\medskip
\paragraph{\it Final scaling and the distributional Bernoulli law}
Finally, we scale to the original domain.  Define
$$
    \Omega:=\lambda\Omega^{\rm ref},
$$
and define $U$ on $\Omega$ by
$$
    U(\lambda X):=\lambda^2U^{\rm ref}(X),
    \qquad X\in\Omega^{\rm ref}.
$$
Then
$$
    -\Delta U=f(U)\quad\text{in }\Omega,
$$
and $U$ has zero Sobolev trace on $\partial\Omega$.  Testing
\eqref{eq:hd-ref-weak-law} against $\varphi(\lambda\cdot)$ gives, for the zero
extension of $U$,
\begin{equation}\label{eq:hd-final-weak-law}
\Delta U
=c\,\mathcal H^{n-1}\!\lfloor_{\partial^*\Omega}
-f(U)\mathbf1_\Omega\,dX
\qquad\text{in }\mathcal D'(\mathbb R^n),
\end{equation}
where
$$
    c:=\lambda c_{\rm mer}>0.
$$
The zero extension belongs to $W^{1,2}(\mathbb R^n)$ since $U$ has zero Sobolev
trace on $\partial\Omega$.

\medskip


\paragraph{\it Topology and non-sphericity}
The singular measure $\mu_{a,\delta_*}$ is nonzero, so the meridian domain is
non-Smirnov.  In particular $D$ is not a disk.  Since
$D\subset\subset\{r>0\}$ is a Jordan domain, the rotated domain has product
topology
$$
    \Omega^{\rm ref}\simeq \mathbb B^2\times\mathbb S^{n-2},
$$
and its boundary has product topology
$$
    \partial\Omega^{\rm ref}\simeq \mathbb S^1\times\mathbb S^{n-2}.
$$
The same is true after the scaling by $\lambda$.  In particular, for $n\ge3$,
$\Omega$ is not homeomorphic to a ball, and hence is not a ball.

\medskip
\paragraph{\it Finite perimeter and failure of upper density}
Finally, we check the estimates on the boundary. Let
$\Gamma:=\partial D .$ Since $D$ is a rectifiable Jordan domain, $D$ is a set of finite perimeter in
the plane and
$$
\mathcal H^1(\Gamma)<\infty, \qquad \mathcal H^1(\Gamma\setminus\partial^*D)=0.
$$
The second property follows from the standard structure theorem for rectifiable Jordan domains: At $\mathcal H^1$-a.e. point of a rectifiable Jordan curve there is an approximate tangent, and the blow-up of the Jordan domain is a half-plane; see \cite[Chapter 1 \& 10]{P1992}.

Since $D\subset\subset \{r>0\}$, there exist constants $0<r_-<r_+<\infty$ such that
$$
  r_-\le r\le r_+  \qquad\text{for every }(x,r)\in\Gamma .
$$
Parametrize $\Gamma$ by arclength,
$$
  \gamma(s)=(x(s),r(s)), \qquad s\in[0,\mathcal H^1(\Gamma)].
$$
The rotated boundary is parametrized, up to a negligible set, by
$$
 \Phi(s,\omega):=(x(s),r(s)\omega), \qquad \omega\in\mathbb S^{n-2}.
$$
The tangential derivative in the $s$-direction is
$$
\partial_s\Phi=(x'(s),r'(s)\omega),
$$
and the derivatives in the spherical directions are multiplied by the factor $r(s)$.
Since $|\gamma'(s)|=1$, the area Jacobian is
$$
 J_\Phi(s,\omega)=r(s)^{\,n-2}=r(s)^p .
$$
Thus the area formula gives
$$
  \mathcal H^{n-1}(\partial^*\Omega)  = \lambda^{n-1} \mathcal H^{n-2}(\mathbb S^{n-2}) \int_{\partial^*D} r^p\,d\mathcal H^1;
$$
recall that $\Omega:=\lambda\,\Omega^{\rm ref}$. 
Using $r_-\le r\le r_+$ and $0<\mathcal H^1(\Gamma)<\infty$, we obtain
$$
0<\mathcal H^{n-1}(\partial^*\Omega)<\infty .
$$
Moreover,
$$
\partial\Omega
=
\{\lambda(x,r\omega):(x,r)\in\Gamma,\ \omega\in\mathbb S^{n-2}\},
$$
and the same area formula gives
$$
\mathcal H^{n-1}(\partial\Omega\setminus\partial^*\Omega)
\le \lambda^{n-1}\mathcal H^{n-2}(\mathbb S^{n-2})
\int_{\Gamma\setminus\partial^*D} r^p\,d\mathcal H^1=0.
$$
This proves the first two claims.

It remains to prove the failure of the uniform upper density bound.  Suppose, toward
a contradiction, that
$$
 M:={\rm ess}\sup_{X\in\partial^*\Omega}
\sup_{0<\rho<1} \frac{\mathcal H^{n-1}(B_\rho(X)\cap\partial^*\Omega)}{\rho^{n-1}} <\infty.
$$
Then for $\mathcal H^{n-1}$-a.e. $X_0\in\partial^*\Omega$,
\begin{equation}\label{eq:all-center-upper-density}
\mathcal H^{n-1}(B_\rho(X_0)\cap\partial^*\Omega)
\le CM\rho^{n-1}
\end{equation}
for every $X_0\in\partial\Omega$ and all sufficiently small $\rho>0$.
This immediately yields a contradiction to the main result of \cite{FZ2025} when $f\equiv 1$. We next verify this inconsistency  directly according to our construction.

We first show that this implies a uniform upper linear density bound for the meridian
curve $\Gamma$.  Fix $y_0=(x_0,r_0)\in\Gamma$ and $\omega_0\in\mathbb S^{n-2},$
and set
$$
    X_0:=\lambda(x_0,r_0\omega_0)\in\partial\Omega .
$$
Let $0<s<s_0$, where $s_0>0$ is chosen so small that $s_0<r_-/10$, and put $I_s:=\Gamma\cap B_s(y_0).$
Let
$$
    \mathcal C_s:=\{\omega\in\mathbb S^{n-2}:|\omega-\omega_0|<c_0s\},
$$
with $c_0>0$ chosen depending only on $r_+$.  For every
$(x,r)\in I_s$ and every $\omega\in\mathcal C_s$,
$$
\lambda(x,r\omega)\in B_{C\lambda s}(X_0),
$$
where $C$ depends only on $r_+$.  Hence
$$
\{\lambda(x,r\omega):(x,r)\in I_s,\ \omega\in\mathcal C_s\}
\subset B_{C\lambda s}(X_0)\cap\partial\Omega .
$$
The area formula on this patch and the lower bound $r\ge r_-$ give
$$
\mathcal H^{n-1}(B_{C\lambda s}(X_0)\cap\partial^*\Omega)
\ge c\lambda^{n-1}s^{n-2}\mathcal H^1(I_s),
$$
where $c>0$ depends only on $n,r_-,r_+$.  Combining this with
\eqref{eq:all-center-upper-density} at scale $C\lambda s$, we get
$$
    c\lambda^{n-1}s^{n-2}\mathcal H^1(\Gamma\cap B_s(y_0))
    \le CM(\lambda s)^{n-1}.
$$
After cancellation,
$$
    \mathcal H^1(\Gamma\cap B_s(y_0))\le CMs.
$$
For larger $s$, the same estimate follows after increasing $C$, since
$\mathcal H^1(\Gamma)<\infty$.  Therefore
$$
    \sup_{y\in\Gamma}\sup_{s>0}
    \frac{\mathcal H^1(\Gamma\cap B_s(y))}{s}<\infty.
$$
Thus $\Gamma$ is a rectifiable Jordan curve with a uniform upper linear density
bound, i.e. it is Ahlfors regular.  By Zinsmeister's result \cite{Z1984}, recorded for instance as \cite[Theorem~7.6]{P1992}, the meridian domain is Smirnov.  This contradicts the construction:
The conformal derivative of the meridian map has a nontrivial singular inner
factor since the singular measure $\mu_{a,\delta_{\tau,\lambda}}$ is nonzero.  Hence
$$
{\rm ess}\sup_{X\in\partial^*\Omega}
 \sup_{0<\rho<1} \frac{\mathcal H^{n-1}(B_\rho(X)\cap\partial^*\Omega)}{\rho^{n-1}}
 =\infty .
$$
This concludes the proof of Theorem~\ref{thm:higher-dimensional-counterexample}.

\medskip

\noindent{\bf  Data availability}: Data sharing is not applicable to this article since no datasets were generated or analysed in the course of the present study.

\end{document}